\theoremstyle{plain}
\newtheorem{theorem}{Theorem}[section]
\newtheorem{proposition}[theorem]{Proposition}
\newtheorem{corollary}[theorem]{Corollary}
\newtheorem{lemma}[theorem]{Lemma}
\newtheorem{fact}[theorem]{Fact}
\theoremstyle{definition}
\newtheorem{definition}[theorem]{Definition}
\newtheorem{remark}[theorem]{Remark}
\newtheorem{example}[theorem]{Example}
\DeclareMathOperator{\conv}{conv}
\DeclareMathOperator{\cone}{cone}
\DeclareMathOperator{\tr}{tr}
\DeclareMathOperator{\N0}{\mathbb{0}}
\newcommand{\abs}[1]{{\left\lvert{#1}\right\rvert}}
\newcommand{\pAbs}[2]{{\left\lvert{#1}\right\rvert_{#2}}}
\newcommand{\close}[1]{\overline{#1}}
\newcommand{\RR}{\mathbb{R}}
\newcommand{\ZZ}{\mathbb{Z}}
\newcommand{\Mone}[1]{{{\mathbb 1}_{#1}}}%
\newcommand{\Tp}{{\mspace{-1mu}\scriptscriptstyle\top\mspace{-1mu}}}
\newcommand{\cplmt}{\complement}
\newcommand{\idn}[1]{\imath_{#1}}
\newcommand{\id}{\imath}
\newcommand{\One}{\mathbf{1}}
\newcommand{\Zero}{\mathbf{0}}
\newcommand{\SzMsymb}{{\mathbb S}\mspace{-3.2mu}{}^{\scriptscriptstyle0}}
\newcommand{\SzM}[1]{\SzMsymb_{#1}}
\newcommand{\MM}{\mathbb{M}}
\newcommand{\lt}{\left}
\newcommand{\rt}{\right}
\newcommand{\Mtxv}[1]{{\lt(\begin{smallmatrix}#1\end{smallmatrix}\rt)}}
\newcommand{\pair}[2]{#1\!\!\!\nearrow\!\!#2}
\newcommand{\Cm}[1]{C^{\text{\tiny$#1$}}}
\newcommand{\path}{{-}}
\newcommand{\stbox}[2]{\text{\tiny\parbox{#1}{\centering\baselineskip=0.5 \baselineskip #2}}}
\newcommand{\ubtxt}[3]{\underbrace{#1}_{\stbox{#2}{#3}}}
\newcommand{\M}[1]{\mathbb{#1}}
\newcommand{\eps}{\varepsilon}
\begin{document}

\title[On a Class of Metrics Related to Graph Layout Problems]{On a Class of Metrics Related to\\ Graph Layout Problems}

\author[Letchford]{Adam N.~Letchford}%
\address{Department of Management Science, Lancaster University,
  United Kingdom. E-mail: {\tt a.n.letchford@lancaster.ac.uk}}%
\author[Reinelt]{Gerhard Reinelt}%
\address{Gerhard Reinelt, Institute of Computer Science, University of
  Heidelberg, Germany. E-mail: {\tt
    Gerhard.Reinelt@informatik.uni-heidelberg.de}}%
\author[Seitz]{Hanna Seitz}%
\address{Hanna Seitz, Institute of Computer Science, University of
  Heidelberg, Germany. E-mail: {\tt
    Hanna.Seitz@informatik.uni-heidelberg.de}}%
\author[Theis]{Dirk Oliver Theis}%
\address{D\'epartement de Math\'ematique, Universit\'e Libre de
  Bruxelles, Belgium. E-mail: {\tt theis@uni-heidelberg.de}}%

\date{May 2010}

\begin{abstract}
We examine the metrics that arise when a finite set of points is embedded in
the real line, in such a way that the distance between each pair of points is
at least $1$. These metrics are closely related to some other known metrics in
the literature, and also to a class of combinatorial optimization problems
known as graph layout problems. We prove several results about the structure
of these metrics. In particular, it is shown that their convex hull is not
closed in general. We then show that certain linear inequalities define facets
of the closure of the convex hull. Finally, we characterise the unbounded edges
of the convex hull and of its closure.
\\*[1mm]
{\bf Key Words}: metric spaces, graph layout problems, convex analysis,
polyhedral combinatorics.
\end{abstract}

\maketitle

\section{Introduction} \label{sec:intro}

For a given positive integer $n$, let $[n]$ denote $\{1,\dots,n\}$. A
{\it metric}\/ on $[n]$ is a mapping $d \colon [n] \times [n] \to \RR_+$
which satisfies the following three conditions:
\begin{itemize}
\item $d(i,j) = d(j,i)$ for all $\{i,j\} \subset [n]$,
\item $d(i,k) + d(j,k) \ge d(i,j)$ for all ordered triples $(i,j,k) \subset [n]$,
\item $d(i,j) = 0$ if and only if $i = j$.
\end{itemize}
Metrics are a special case of {\it semimetrics}, which are obtained by dropping
`and only if' from the third condition. There is a huge literature on metrics and
semimetrics; see for example \cite{DL97}. 
%In this paper we will use the term metric to denote metrics and semimetrics. 
The inequalities in the second condition are the well-known {\it triangle inequalities}.

In this paper we study the metrics $d$ on $[n]$ that arise when $n$ points are
embedded in the real line, in such a way that the distance between each pair of
points is at least $1$. More formally, we require that $d$ satisfies the following
two properties:
\begin{itemize}
\item there exist real numbers $r_1, \dots, r_n$ such that
$d(i,j) = \abs{r_i-r_j}$ for all $\{i,j\} \subset [n]$;
\item $d(i,j) \ge 1$ for all $\{i,j\} \subset [n]$.
\end{itemize}
We remark that one could easily replace the value $1$ with some arbitrary constant
$\epsilon > 0$; the results in this paper would remain essentially unchanged.

We call the metrics in question `$\RR$-embeddable $1$-separated' metrics.
We believe that these metrics are a natural object of study, and of interest in
their own right. We have, however, two specific motives for studying
them. First, 
they are closely related to certain well-known metrics that have appeared
in the literature. Second, they are also closely related to an important class
of combinatorial optimization problems, known as \emph{graph layout problems}.

As well as studying the metrics themselves, we also study their convex hull. It
turns out that the convex hull is not always closed, which leads us to study
also the closure of the convex hull. Among other things, we characterise some
of the $(n-1)$-dimensional faces (i.$\,$e., facets) of the closure, and some of
the $1$-dimensional faces (i.$\,$e., edges) of both the convex hull and its
closure.

The structure of the paper is as follows. In Section~\ref{se:literature}, we 
review some of the relevant literature on metrics and graph layout
problems. In Section~\ref{se:structure}, we present various results concerned
with the structure of the metrics and their convex hull. Next, in Section~\ref{se:facets}, we present some inequalities that define facets of the
closure of the convex hull. In Section~\ref{se:edges}, we give a combinatorial
characterisation of the unbounded edges of the convex hull and of its closure.
Finally, some concluding remarks are given in Section~\ref{se:end}.

We close this section with a word on notation. 
To study convex geometric properties, we view metrics as points in
a vector space $\SzM{n}$.  In our notation, $\SzM{n}$ will be either
the vector space of all symmetric functions $[n]\times[n] \to \RR$ or
the vector space of all real symmetric $(n\times n)$-matrices whose
diagonal entries are zero, and we will switch freely between them.
For the latter, the inner product is defined as usual by
\begin{equation*}
  A\bullet B := \tr(A^\Tp B) = \sum_{k=1}^n\sum_{l=1}^n A_{k,l} B_{k,l}.
\end{equation*}
We understand a metric both as a function and a matrix, and we
will switch between the two concepts without further mentioning.

By $S(n)$ we denote the set of all permutations of $[n]$.  We
occasionally view $S(n)$ as a subset of $\RR^d$ by identifying the
permutation $\pi$ with the point $(\pi(1),\dots,\pi(n))^\Tp$.
Furthermore we let $\idn{n} := (1,\dots,n)$ the identity permutation
in $S(n)$.  We omit the index $n$ when no confusion can arise.  
$\One$~is a column vector of appropriate length consisting of ones.
Similarly $\Zero$ is a vector whose entries are all zero.  If
appropriate, we will use a subscript $\One_k$, $\Zero_k$ to identify
the length of the vectors.  The symbol $\N0$ denotes an all-zeros
matrix not necessarily square, and we also use it to say ``this part
of the matrix consists of zeros only.'' By $\Mone{n}$ we denote the
square matrix of order~$n$ whose $(k,l)$-entry is $1$ if $k\ne l$ and
$0$ otherwise.  As above we will omit the index $n$ when appropriate.
We denote by $\cplmt U$ the complement of the set $U$.

%%% Local Variables: 
%%% mode: latex
%%% TeX-master: "Qn"
%%% End: 

\section{Literature Review} \label{se:literature}

In this section, we review some of the relevant literature. We cover
related semimetrics in Subsection \ref{sub:lit1} and graph layout
problems in Subsection \ref{sub:lit2}.  To facilitate reading we have
summarized all matrix sets discussed in Table~\ref{table:Summary}.

\begin{table}
  \begin{center}
    \begin{tabular}{lll} 
      \toprule  
%         \addlinespace[1.5ex]
%         Identifier & Meaning  \\
%         \cmidrule(lr){1-2}
%         \addlinespace[1.0ex]
%         $S(n)$ & set of all permutations of $[n]$\\
%          \addlinespace[1.0ex]
%         $\SzM{n}$ &  either vector space of all symmetric functions $[n]\times[n] \to \RR$\\
%          \addlinespace[1.0ex]
%         & or vector space of all real symmetric $(n\times n)$-matrices \\
%          \addlinespace[1.0ex]
%         & whose diagonal entries are zero\\
%         \addlinespace[1.0ex]        
      CUT$_n$& $\ell_1$-embeddable semimetrics (cut cone) \\
      \addlinespace[1.0ex]
      HYP$_n$& hypermetrics, see~\eqref{eq:hypermetric}\\
      \addlinespace[1.0ex]
      NEG$_n$ & negative-type cone, see~\eqref{eq:negtype}\\
      \addlinespace[1.0ex]
      $M_n^{L2}$ &  $\ell_2$-embeddable semimetrics\\
      \addlinespace[1.0ex]
      $M_n^{R}$ & $\RR$-embeddable semimetrics\\
      \addlinespace[1.0ex]
      $M_n^{R1}$ &  $\RR$-embeddable $1$-separated metrics\\
      \addlinespace[1.0ex]
      $Q_n$ & convex hull of $M_n^{R1}$ \\
      \addlinespace[1.0ex]
      $\close{Q_n}$ & closure of $Q_n$\\
      \addlinespace[1.0ex]
      $P_n$ & permutation metrics polytope, see~\eqref{eq:def-Pn}\\
      \addlinespace[1.0ex]
      \bottomrule
    \end{tabular}
  \end{center}
  \label{table:Summary}
  \caption{Sets of matrices}
\end{table}

\subsection{Some related semimetrics} \label{sub:lit1}

The following four classes of semimetrics on $[n]$, which are closely
related to the $\RR$-embeddable $1$-separated metrics, have been extensively
studied in the literature (see \cite{DL97} for a detailed survey):
\begin{itemize}
\item The {\it $\ell_1$-embeddable}\/ semimetrics, i.$\,$e., those for which
there exist a positive integer $m$ and points $x_1, \dots, x_n \in \RR^m$ such
that $d(i,j) = \pAbs{x_i-x_j}{1} := \sum_{k=1}^m \abs{x_{ik}-x_{jk}}$ for all
$\{i,j\} \subset [n]$.
\item The {\it $\ell_2$-embeddable}\/ semimetrics, which are defined as in the
$\ell_1$ case, except that
$d(i,j) = \pAbs{x_i-x_j}{2} := \sqrt{\sum_{k=1}^m (x_{ik}-x_{jk})^2}$.
\item The {\it $\RR$-embeddable}\/ semimetrics, which are the special case
of $\ell_1$- (or $\ell_2$-) embeddable semimetrics obtained when $m=1$.
\item The {\it hypermetrics}, which are semimetrics that satisfy
the following {\it hypermetric}\/ inequalities \cite{D61}:
\begin{equation}\label{eq:hypermetric}
  \sum_{\{i,j\} \subset [n]} b_ib_j d(i,j) \le 0 \qquad
  (\forall b \in \ZZ^n: \sum_{i=1}^n b_i = 1).
\end{equation}
\end{itemize}
It is known \cite{A80} that the set of $\ell_1$-embeddable semimetrics on $[n]$
is a polyhedral cone in $\RR^{\binom{n}{2}}$. In fact, it is nothing but the
well-known {\it cut cone}, denoted by CUT$_n$. The set of all hypermetrics on
$[n]$, called the {\it hypermetric cone}\/ and denoted by HYP$_n$, is also
polyhedral \cite{DGL93}.

We will let $M_n^{L2}$ and $M_n^R$ denote the set of $\ell_2$- and
$\RR$-embeddable semimetrics, respectively. It is known that
$M_n^{L2}$ and $M_n^R$ are not convex (unless $n$ is small), and that
the convex hull of $M_n^{L2}$ and $M_n^R$ is CUT$_n$. It is also
known~\cite{S35} that a symmetric function $d$ lies in $M_n^{L2}$ if
and only if $d^2$ (i.$\,$e., the symmetric function obtained by
squaring each value) lies in the so-called {\it negative-type cone}.
The negative-type cone, denoted by NEG$_n$, is the (non-polyhedral)
cone defined by the following {\it negative-type}\/ inequalities:
\begin{equation}\label{eq:negtype}
\sum_{\{i,j\} \subset [n]} b_ib_j d(i,j) \le 0 \qquad
(\forall b \in \RR^n: \sum_{i=1}^n b_i = 0).
\end{equation}
The structure of $M_n^R$ and related sets is studied in \cite{BD92}.

In recent years, there has been a stream of papers on so-called
{\it negative-type}\/ semimetrics (also known as $\ell_2^2$-semimetrics)
\cite{ALN07,ALN08,CGR08,KV05,KR06,L05}. These are simply semimetrics that lie
in NEG$_n$. They have been used to derive approximation algorithms for various
combinatorial optimisation problems, including the graph layout problems that
we mention in the next subsection.

The following inclusions are known: $M_n^R \subset M_n^{L2} \subset$
CUT$_n \subset$ HYP$_n \subset$ NEG$_n$. Denoting the set of all 
$\RR$-embeddable $1$-separated metrics by $M_n^{R1}$, we obtain from their definition 
%From the definition of the
%$\RR$-embeddable $1$-separated metrics, we also have
$M_n^{R1} \subset M_n^R$. We will explore the relationship between
$M_n^{R1}$, $M_n^R$ and CUT$_n$ further in Subsection \ref{sub:structure1}.

\subsection{Graph layout problems} \label{sub:lit2}

Given a graph $G=(V,E)$, with $V = [n]$, a {\it layout}\/ is simply a
permutation of $[n]$. If we view a layout $\pi \in S(n)$ as a placing of
the vertices on points $1,\ldots,n$ along the real line, the quantity
$|\pi(i) - \pi(j)|$ corresponds to the Euclidean distance between vertices
$i$ and $j$. Several important combinatorial optimization problems,
collectively known as {\it graph layout problems}, call for a layout
minimising a function of these distances (see the survey \cite{DPS02}). For
example, in the {\it Minimum Linear Arrangement Problem}\/ (MinLA), the
objective is to minimize $\sum_{\{i,j\} \in E} |\pi(i) - \pi(j)|$. In the
{\it Bandwidth Problem}, the objective is to minimise
$\max_{\{i,j\} \in E} |\pi(i) - \pi(j)|$.

Now, let $d(i,j)$ for $\{i,j\} \subset [n]$ be a decision variable,
representing the quantity $|\pi(i) - \pi(j)|$. It has been observed by
several authors that interesting relaxations of graph layout problems
can be formed by deriving valid linear inequalities that are satisfied
by all feasible symmetric functions $d$.  To our knowledge, the first
paper of this kind was \cite{LV95}, which presented the following {\it
  star}\/ inequalities:
\begin{equation}  \label{eq:star}
\sum_{j \in S} d(i,j) \ge \lfloor (\abs S+1)^2/4 \rfloor.
\end{equation}
Here, $i \in [n]$ and $S \subset [n] \setminus \{i\}$ is such that every node
in $S$ is adjacent to $i$.

Apparently independently, Even \emph{et al.}\ \cite{ENRS00} defined the
so-called {\it spreading metrics}. These are metrics that satisfy the following
{\it spreading}\/ inequalities:
\begin{equation}  \label{eq:spreading}
\sum_{j \in S} d(i,j) \ge \abs{S}(\abs{S}+2)/4
\qquad (\forall i \in [n], \forall S \subseteq [n] \setminus \{i\}).
\end{equation}
Note that the spreading inequalities are more general than the star inequalities,
but have a slightly weaker right-hand side when $n$ is odd. Spreading metrics were
used in \cite{ENRS00,RR05} to derive approximation algorithms for various graph
layout problems.

In \cite{CHKR08,FL07}, it was noted that one can get a tighter relaxation of
graph layout problems by requiring the spreading metrics to lie in the
negative-type cone NEG$_n$. The authors called the resulting metrics
{\it $\ell_2^2$-spreading}\/ metrics.

A natural way to derive further valid linear inequalities for graph
layout problems is to study the following \textit{permutation metrics polytope:}
\begin{equation}\label{eq:def-Pn}
%ADAM
%P_n = \conv \left\{
%d \in \RR^{\binom{n}{2}}\Bigm|\; \exists \pi \in S(n):\;
%d(i,j) = \abs{\pi(i)-\pi(j)}\; \forall \{i,j\} \subset [n]
%\right\},
P_n = \conv \Bigl\{ 
d \Bigm|
\exists\pi\in S(n):\; d(i,j)=\abs{\pi(i)-\pi(j)}\;\forall \{i,j\} \subset [n]
\Bigr\}.
\end{equation}
Surprisingly, this was not
done until very recently \cite{AL09}. In \cite{AL09}, it is shown that
$P_n$ is of dimension $\binom{n}{2}-1$ and that its affine hull is
defined by the equation $\sum_{\{i,j\} \subset [n]} d(i,j) = \binom{n+1}{3}$.
It is also shown that the following four classes of inequalities define
facets of $P_n$ under mild conditions:
\begin{itemize}
\item {\it pure hypermetric}\/ inequalities, which are simply the hypermetric
inequalities (\ref{eq:hypermetric}) for which $b \in \{0, \pm 1 \}^n$;
\item {\it strengthened pure negative-type}\/ inequalities, which are like the
negative-type inequalities (\ref{eq:negtype}) for which $b \in \{0, \pm 1 \}^n$,
except that the right-hand side is increased from $0$ to
$\frac{1}{2} \sum_{i \in [n]} |b_i|$;
\item {\it clique}\/ inequalities, which take the form
\begin{equation} \label{eq:clique}
\sum_{\{i,j\} \subset S} d(i,j) \ge \binom{|S|+1}{3},
\end{equation}
where $S \subset [n]$ satisfies $2 \le |S| < n$;
\item {\it strengthened star}\/ inequalities, which take the form
\begin{equation} \label{eq:strongstar}
(|S|-1) \sum_{i \in S} d(r,i) - \sum_{\{i,j\} \subset S} d(i,j) \ge
\left\lfloor (|S|+1)^2(|S|-1)/12 \right\rfloor,
\end{equation}
where $r \in V$ and $S \subseteq V \setminus \{r\}$ with $|S| \ge 2$.
\end{itemize}
It is pointed out in the same paper that each star inequality (\ref{eq:star})
with $|S| \ge 2$ is dominated by a clique inequality (\ref{eq:clique}) and a
strengthened star inequality (\ref{eq:strongstar}). Therefore, very few of
the star inequalities define facets of $P_n$.

Finally, we mention that some more valid inequalities were presented
recently by Caprara \emph{et al.}\ \cite{CLS09}. Some of them were
proved to define facets of the \emph{dominant}\/ of $P_n$, though not
of $P_n$ itself.

We will establish an interesting connection between $M_n^{R1}$, CUT$_n$
and $P_n$ in Subsection \ref{sub:structure2}.

%%%%%%%%%%%%%%%%%%%%%%%%%%%%%%%%%%%%%%%%%%%%%%%
%%%%%%%%%%%%%%%%%%%%%%%%%%%%%%%%%%%%%%%%%%%%%%%
%%%%%%%%%%%%%%%%%%%%%%%%%%%%%%%%%%%%%%%%%%%%%%%
%%%%%%%%%%%%%%%%%%%%%%%%%%%%%%%%%%%%%%%%%%%%%%%
%%% Local Variables: 
%%% mode: latex
%%% TeX-master: "Qn"
%%% End: 

\section{On $M_n^{R1}$ and its Convex Hull} \label{se:structure}

%%%%%%%%%%%%%%%%%%%%%%%%%%%%%%%%%%%%%%%%%%%%%%%
%%%%%%%%%%%%%%%%%%%%%%%%%%%%%%%%%%%%%%%%%%%%%%%
%%%%%%%%%%%%%%%%%%%%%%%%%%%%%%%%%%%%%%%%%%%%%%%
%%%%%%%%%%%%%%%%%%%%%%%%%%%%%%%%%%%%%%%%%%%%%%%
\subsection{On $M_n^{R1}$ and related sets} \label{sub:structure1}

We now study $M_n^{R1}$ and its relationship with $M_n^R$, $P_n$ and
CUT$_n$. We will find it helpful to recall the definition of a {\it cut metric}:
%\begin{definition}
%A vector $d \in \{0,1\}^{\binom{n}{2}}$ is a {\it cut metric}\/ if there
%exists a proper subset $S \subset [n]$ such that, for all
%$\{i,j\} \subset [n]$, $d(i,j) = 1$ if and only if $|\{i,j\} \cap S| = 1$.
%\end{definition}

\begin{definition}
  For a set $U\subset[n]$, we let $d_U$ be the metric which assigns to
  two points on different sides of the bipartition $U,\cplmt U$ of
  $[n]$ a value of $1$ and to points on the same side a value of $0$.
\end{definition}
We will say that the set $U$ {\it induces}\/ the associated cut
metric.
In other words, if we let $D_{k,l}(x) := \abs{x_k-x_l}$ for every
vector $x\in\RR^n$ (and identify, as promised, functions and
matrices), then $d_U = D(\chi^U)$.  
With this notation, CUT$_n$ is the convex cone with apex $0$ in
$\SzM{n}$ generated by the points $d_U$, i.$\,$e.,
\begin{equation*}
  \text{CUT}_n := \cone \Bigl\{ d_U \Bigm| d_U \;\text{is the cut metric for}\; U \subset [n] \Bigr\}.
\end{equation*} 
It is known \cite{BM86} that each cut metric defines an extreme ray of
CUT$_n$.

We will also need the following notation.
For a given permutation $\pi \in S(n)$, let $N_\pi$ be the set of
$x\in\RR^n$ which satisfy $x_{\pi(i)} \le x_{\pi(i+1)}$ for $i = 1,
\ldots, n-1$.  Now let $M(\pi)$ denote the set of metrics $d$ for
which there exists an $x\in N_\pi$ with $d=D(x)$.
Also, for a given $\pi$ and for $k=1, \ldots, n-1$, we emphasize that
$D(\chi^{\pi^{-1}([k])})$ is the cut metric induced by the set $U =
\{\pi^{-1}(1), \ldots, \pi^{-1}(k)\}$. (So, for example, if $n=4$ and
$\pi = \{2,3,1,4\}$, then $D(\chi^{\pi^{-1}([2])})$ is the cut metric induced
by the set $\{2,3\}$.)

We have the following lemma:
\begin{lemma}\label{le:sum1}%
  $M(\pi)$ is a polyhedral cone of dimension $n-1$ defined by the
  $n-1$ cut metrics $D(\chi^{\pi^{-1}([1]))}, \ldots, D(\chi^{\pi^{-1}([n-1])})$.
\end{lemma}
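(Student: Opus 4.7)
The plan is to reduce to the identity permutation and then write every $x\in N_\pi$ in terms of the non-negative gaps between consecutive (ordered) coordinates. First, by relabeling via $\pi$ we may assume $\pi = \imath$, so that $N_\imath = \{x\in\RR^n : x_1\le x_2\le\cdots\le x_n\}$ and the cut metrics in question are $D(\chi^{[k]})$ for $k=1,\ldots,n-1$, where $\chi^{[k]}$ is the $\{0,1\}$-vector with a $1$ in coordinates $1,\ldots,k$. The general case follows by applying the same argument to the reordered coordinates; the relabeling sends $\chi^{\pi^{-1}([k])}$ to $\chi^{[k]}$.

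The core computation is to parameterize $N_\imath$ by the non-negative increments $\delta_k := x_{k+1}-x_k$ for $k=1,\ldots,n-1$, together with the translation $x_1$. Observe that $D$ is invariant under translations, so $D(x)$ depends only on the $\delta_k$. For $i<j$ we have $D(x)_{i,j}=x_j-x_i = \sum_{k=i}^{j-1}\delta_k$, and $D(\chi^{[k]})_{i,j}=1$ precisely when $i\le k<j$. Summing gives
\begin{equation*}
D(x) \;=\; \sum_{k=1}^{n-1}\delta_k\, D(\chi^{[k]}),
\end{equation*}
entrywise. This shows the inclusion $M(\imath)\subseteq\cone\{D(\chi^{[1]}),\ldots,D(\chi^{[n-1]})\}$. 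For the reverse inclusion, given any $\delta_1,\ldots,\delta_{n-1}\ge 0$, put $x_1:=0$ and $x_i:=\sum_{k<i}\delta_k$; then $x\in N_\imath$ and $D(x)$ is the prescribed conic combination, establishing equality.

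It remains to verify the dimension claim, i.e., that the $n-1$ generators are linearly independent. For this it suffices to examine the $(1,j)$-entries for $j=2,\ldots,n$: the matrix $\bigl(D(\chi^{[k]})_{1,j+1}\bigr)_{k,j=1,\ldots,n-1}$ equals the lower-triangular $(n-1)\times(n-1)$ matrix whose entry in row $k$, column $j$ is $1$ if $k\le j$ and $0$ otherwise, which is non-singular. Hence the $n-1$ cut metrics span an $(n-1)$-dimensional subspace of $\SzM{n}$, and $M(\imath)$ is a simplicial polyhedral cone of dimension $n-1$.

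There is no real obstacle: the only thing to watch is the bookkeeping when switching between the vector $x$ and the matrix $D(x)$, and the elementary check of linear independence. Everything else is forced by the identity $x_j-x_i=\sum_{k=i}^{j-1}\delta_k$ valid on $N_\imath$.
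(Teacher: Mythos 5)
Your proof is correct and follows essentially the same route as the paper: after reducing to $\pi=\id$, you decompose $D(x)=\sum_{k=1}^{n-1}(x_{k+1}-x_k)\,D(\chi^{[k]})$ with nonnegative gaps, which is precisely the identity the paper's proof rests on, and you also verify the linear independence of the $n-1$ cut metrics, a step the paper leaves implicit. One trivial slip: the matrix $\bigl(D(\chi^{[k]})_{1,j+1}\bigr)_{k,j}$ you describe, with a $1$ in position $(k,j)$ exactly when $k\le j$, is \emph{upper}-triangular rather than lower-triangular, but it has unit diagonal and is nonsingular either way, so the dimension argument stands.
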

\begin{proof}
  Let $d^* \in M(\pi)$ and let $x_1, \ldots, x_n$ be the corresponding
  points in $\RR$. One can check that:
  \[
  d^* = \sum_{k=1}^{n-1} (x_{k+1} - x_{k}) D(\chi^{\pi^{-1}([k])}).
  \]
  From the definition of $M(\pi)$, we have $x_{k+1} - x_{k} \ge 0$ for
  $k = 1, \ldots, n-1$. Thus, $d^*$ is a conical combination of the
  $n-1$ cut metrics mentioned. This shows that $M(\pi)$ is contained
  in the cone mentioned.  The reverse direction is similar.
\end{proof}

This enables us to describe the structure of $M_n^R$:
\begin{proposition}\label{pr:union1}
  $M_n^R$ is the union of $n!/2$ polyhedral cones, each of dimension $n-1$.
\end{proposition}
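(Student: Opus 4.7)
The plan is to prove the proposition in three steps: cover $M_n^R$ by the cones $M(\pi)$ from Lemma~\ref{le:sum1}, fold the index set $S(n)$ via the reverse-permutation involution $\pi \mapsto \pi^R$, and rule out any further coincidences among the cones.

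\textbf{Coverage.} The inclusion $\bigcup_{\pi \in S(n)} M(\pi) \subseteq M_n^R$ is immediate from the definition of $M(\pi)$, since every element of $M(\pi)$ is of the form $D(x)$ with $x \in \RR^n$. For the converse, given $d \in M_n^R$ pick any $x \in \RR^n$ with $d = D(x)$ and let $\pi$ be a permutation sorting the entries of $x$ nondecreasingly; then $x \in N_\pi$ and hence $d \in M(\pi)$. Combined with Lemma~\ref{le:sum1}, this writes $M_n^R$ as a union of at most $n!$ polyhedral cones of dimension $n-1$.

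\textbf{Folding.} Define $\pi^R \in S(n)$ by $\pi^R(i) := \pi(n+1-i)$. A direct check shows that if $x \in N_\pi$ then $-x \in N_{\pi^R}$, and obviously $D(-x) = D(x)$; hence $M(\pi) = M(\pi^R)$. For $n \ge 2$ one has $\pi \ne \pi^R$ always, so the involution is fixed-point free and partitions $S(n)$ into exactly $n!/2$ pairs, giving at most $n!/2$ distinct cones.

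\textbf{Distinctness, the principal obstacle.} It remains to show that $M(\pi) = M(\sigma)$ forces $\sigma \in \{\pi,\pi^R\}$. The case $n=2$ is trivial, so assume $n \ge 3$. I would rely on the elementary rigidity fact: if $x,y \in \RR^n$ satisfy $D(x) = D(y)$ and $x$ has at least three distinct entries, then $y = \varepsilon x + c\One$ for some $\varepsilon \in \{-1,+1\}$ and some scalar $c$. This holds because a distance-preserving map between two subsets of $\RR$ containing at least three points extends uniquely to an ambient isometry of $\RR$, and the isometry group of $\RR$ is generated by translations and the reflection $t \mapsto -t$. Pick any $d$ in the relative interior of $M(\pi)$, so $d = D(x)$ with $x_{\pi(1)} < \cdots < x_{\pi(n)}$; in particular $x$ has $n \ge 3$ distinct entries. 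If also $d \in M(\sigma)$, write $d = D(y)$ with $y \in N_\sigma$; the rigidity fact gives $y = \varepsilon x + c\One$. The case $\varepsilon = +1$ preserves the sort order and forces $\sigma = \pi$, while $\varepsilon = -1$ reverses it and forces $\sigma = \pi^R$. The dimension claim is then inherited from Lemma~\ref{le:sum1}, completing the proof.
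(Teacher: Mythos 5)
Your proof follows essentially the same approach as the paper's: decompose $M_n^R$ as $\bigcup_{\pi\in S(n)} M(\pi)$ via Lemma~\ref{le:sum1}, then halve the count by the fixed-point-free reversal involution on $S(n)$. You go slightly further than the paper by also establishing that the resulting $n!/2$ cones are pairwise distinct (via the isometry-rigidity argument), a point the paper leaves implicit, and you correctly use the involution $\pi\mapsto\pi(n+1-\,\cdot\,)$ that satisfies $M(\pi)=M(\pi^R)$ under the paper's convention for $N_\pi$.
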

We define the {\it antipodal} permutation of $\pi\in S(n)$ by
\begin{equation*}
  \pi^- := (n+1)\cdot\One - \pi.
\end{equation*}
This is the permutation obtained by reversing $\pi$.  A swift
computation shows that $D(\pi)=D(\pi^-)$.
\begin{proof}
  From the definitions, we have $M_n^R = \bigcup_{\pi \in S(n)}
  M(\pi)$.  From the above lemma, the set $M(\pi)$ is a polyhedral
  cone of dimension $n-1$.  Now, note that, for any $\pi \in S(n)$, we
  have $M(\pi) = M(\pi^-)$.  Thus, the union can be taken over $n!/2$
  permutations, instead of over all permutations.
\end{proof}

We note in passing that every cut metric belongs to $M(\pi)$ for some
$\pi \in S(n)$. This explains the well-known fact, mentioned in
Subsection \ref{sub:lit1}, that the convex hull of $M_n^R$ is equal to
CUT$_n$.

Now, we adapt these results to the case of $M_n^{R1}$.  We define
$M^1(\pi)$ similar to $M(\pi)$: we denote by $M^1(\pi)$ the set of all
metrics $d$ which are of the form $D(x)$ for an $x\in\RR^n$ which
satisfies $x_{\pi(i)} +1 \le x_{\pi(i+1)}$ for $i = 1, \ldots, n-1$.

Note that the $D(\pi)$ are nothing but the metrics associated with
feasible layouts, which by a result in \cite{AL09} are the extreme
points of $P_n$.  Note also that the sets $M^1(\pi)$ are disjoint.

We have the following lemma:
\begin{lemma} \label{le:sum2}
  $M^1(\pi)$ is the Minkowski sum of the point $D(\pi)$ and the cone
  $M(\pi)$:
  \begin{equation*}
    M^1(\pi) = D(\pi) + D(N_\pi).
  \end{equation*}
\end{lemma}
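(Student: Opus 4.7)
My plan is to prove both inclusions by an explicit substitution between $x$-vectors in $\RR^n$, relying on a single algebraic identity: if $u,v\in N_\pi$, then $D(u+v) = D(u) + D(v)$. Indeed, for $k=\pi(i), l=\pi(j)$ with $i<j$, both $u_l-u_k$ and $v_l-v_k$ are nonnegative, so $|(u+v)_k-(u+v)_l| = (u_l-u_k)+(v_l-v_k) = |u_k-u_l|+|v_k-v_l|$. This identity fails for $D$ in general, but the $\pi$-monotonicity rules out all sign cancellations inside the absolute value.

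Next I would single out a canonical representative $x^\pi\in\RR^n$ of $\pi$, namely the one with $x^\pi_{\pi(i)}=i$ for every $i$; this vector lies in $N_\pi$, has successive $\pi$-coordinates differing by exactly $1$, and realises $D(\pi)$. For $D(\pi)+D(N_\pi)\subseteq M^1(\pi)$, given $y\in N_\pi$, set $x:=x^\pi+y$: the $\pi$-gaps of $x$ satisfy $x_{\pi(i+1)}-x_{\pi(i)} = 1+(y_{\pi(i+1)}-y_{\pi(i)})\ge 1$, so $D(x)\in M^1(\pi)$, and the additivity identity (applied to $x^\pi,y\in N_\pi$) gives $D(x)=D(\pi)+D(y)$. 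For the reverse inclusion, take $D(x)\in M^1(\pi)$ and set $y:=x-x^\pi$; the gap-one condition on $x$ translates exactly into $y\in N_\pi$ (since $y_{\pi(i+1)}-y_{\pi(i)} = (x_{\pi(i+1)}-x_{\pi(i)})-1 \ge 0$), and additivity again yields $D(x)=D(\pi)+D(y)\in D(\pi)+D(N_\pi)$.

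The main subtlety is conceptual rather than computational: the Minkowski-sum structure is unlocked precisely by the additivity of $D$ on $N_\pi$, and one must check that neither the canonical shift $x^\pi$ nor the remainder $y$ ever leaves $N_\pi$ during the argument. Once this is observed, the rest is routine bookkeeping of the same flavor as the proof of Lemma~\ref{le:sum1}.
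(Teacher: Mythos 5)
Your proof is correct, and the approach is close to the paper's, though you package the core linearity fact differently. The paper's proof (echoing Lemma~\ref{le:sum1}) writes $d^*\in M^1(\pi)$ explicitly as $D(\pi)$ plus a telescoping conic combination of the cut metrics $D(\chi^{\pi^{-1}([k])})$, whereas you extract the additivity identity $D(u+v)=D(u)+D(v)$ for $u,v\in N_\pi$ and apply it to the split $x=x^\pi+y$. Both arguments rest on the observation that $D$ restricted to the cone $N_\pi$ is a linear map; your version invokes only what the lemma's statement needs (namely $D(N_\pi)$, not a cut-metric basis for it) and is arguably the more direct path to the claimed Minkowski-sum form. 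One caveat, which you inherit from the paper rather than introduce: with $x^\pi_{\pi(i)}=i$, the vector $x^\pi$ equals $\pi^{-1}$, so under the paper's stated identification of $\pi$ with the point $(\pi(1),\dots,\pi(n))^\Tp$ one literally has $D(x^\pi)=D(\pi^{-1})$ rather than $D(\pi)$. This $\pi\leftrightarrow\pi^{-1}$ mismatch is already present in the paper's own conventions --- the worked example after Lemma~\ref{le:sum1} reports $\pi^{-1}([2])=\{2,3\}$ for $\pi=(2,3,1,4)$, but in fact $\pi^{-1}([2])=\{1,3\}$ while $\pi([2])=\{2,3\}$ --- so you are faithfully tracking the intended apex of the translated cone $M^1(\pi)$, which is exactly what the lemma requires.
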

\begin{proof}
  This can be proven in the same way as Lemma \ref{le:sum1}. The only difference
  is that we decompose $d^* \in M^1(\pi)$ as:
  \[
  d^* = D(\pi) + \sum_{k=1}^{n-1} (r_{k+1} - r_{k} - 1) D(\chi^{\pi^{-1}([k])}),
  \]
  and note that $r_{k+1} - r_{k} - 1 \ge 0$ for $k = 1, \ldots, n-1$.
\end{proof}

We can now derive an analog of Proposition \ref{pr:union1}:
\begin{proposition} \label{pr:union2} $M_n^{R1}$ is the union of
  $n!/2$ disjoint translated polyhedral cones, each of dimension
  $n-1$.
\end{proposition}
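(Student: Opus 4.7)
The plan is to follow the template of the proof of Proposition \ref{pr:union1}, using Lemma \ref{le:sum2} in place of Lemma \ref{le:sum1}, and to add a disjointness argument.

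First, I would verify $M_n^{R1} = \bigcup_{\pi \in S(n)} M^1(\pi)$. Given $d \in M_n^{R1}$ with realization $r_1,\dots,r_n \in \RR$, the $1$-separation forces the $r_i$ to be pairwise distinct, so there is a unique $\pi \in S(n)$ with $r_{\pi(1)} < \dots < r_{\pi(n)}$; the same separation then yields $r_{\pi(i+1)} - r_{\pi(i)} \ge 1$, placing $d$ in $M^1(\pi)$. The reverse inclusion is immediate from the definition of $M^1(\pi)$.

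Second, Lemma \ref{le:sum2} shows that $M^1(\pi)$ is a translate of $M(\pi)$, which is $(n-1)$-dimensional by Lemma \ref{le:sum1}; hence each $M^1(\pi)$ is a translated polyhedral cone of dimension $n-1$. To cut the union down to $n!/2$ representatives, I would check $M^1(\pi) = M^1(\pi^-)$: the map $x \mapsto c\One - x$ preserves $D$ and sends the defining inequalities $x_{\pi(i)} + 1 \le x_{\pi(i+1)}$ to exactly those of $M^1(\pi^-)$.

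The main obstacle is the disjointness of the $n!/2$ resulting cones. The key claim I would prove is that if $d = D(x) = D(y)$ with $x, y \in \RR^n$ both having pairwise distinct coordinates, then $y = x + c\One$ or $y = -x + c\One$ for some $c \in \RR$. A short argument: after translating so that $x_1 = y_1 = 0$, one has $\abs{y_j} = \abs{x_j}$ and so $y_j = \pm x_j$ for each $j$; fixing any $j_0 \ne 1$, we may replace $y$ by $-y$ if necessary and assume $y_{j_0} = x_{j_0}$; then $\abs{y_k - x_{j_0}} = \abs{x_k - x_{j_0}}$ combined with $y_k \in \{x_k,-x_k\}$ and $x_{j_0} \ne 0$ forces $y_k = x_k$ for every $k$. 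Applied to $x \in M^1(\pi)$ and $y \in M^1(\sigma)$, the two cases give $\sigma = \pi$ and $\sigma = \pi^-$, respectively, so no other $\sigma$ is possible. The decisive role of the $1$-separation here is precisely to guarantee that the realization has distinct coordinates, so that the sorting permutation in the first step is well defined and the rigidity argument applies.
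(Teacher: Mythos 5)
Your proof is correct and takes essentially the same route as the paper: decompose $M_n^{R1}$ into the pieces $M^1(\pi)$, invoke Lemma~\ref{le:sum2} (and Lemma~\ref{le:sum1}) for the translated-cone structure and dimension, and use the $\pi \leftrightarrow \pi^-$ symmetry to cut the union to $n!/2$ representatives. The one genuine addition is your rigidity argument showing that a metric with pairwise-distinct realization determines its realization up to translation and reflection, hence proving the disjointness of the pieces; the paper merely asserts this disjointness in a remark preceding Lemma~\ref{le:sum2} and does not supply a proof.
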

\begin{proof}
  From the definitions, we have $M_n^{R1} = \bigcup_{\pi \in S(n)}
  M^1(\pi)$.  From Lemmas \ref{le:sum1} and \ref{le:sum2}, each set
  $M^1(\pi)$ is a translated polyhedral cone of dimension $n-1$. As in
  the proof of Proposition \ref{pr:union1}, the union can be taken
  over only $n!/2$ permutations.
\end{proof}

%%%%%%%%%%%%%%%%%%%%%%%%%%%%%%%%%%%%%%%%%%%%%%%
%%%%%%%%%%%%%%%%%%%%%%%%%%%%%%%%%%%%%%%%%%%%%%%
\subsection{On the convex hull of $M_n^{R1}$ and related sets}
\label{sub:structure2}

We now turn our attention to the convex hull of $M_n^{R1}$, which we denote
by $Q_n$. To give some intuition, we present in Fig.~\ref{fig:Q_3} drawings
of $M_n^{R1}$ and $Q_3$ from three different angles. (Of course, the drawing
is truncated, since $Q_3$ is unbounded.) The three co-ordinates represent
$d(1,2)$, $d(1,3)$ and $d(2,3)$. The three coloured regions represent the three
disjoint subsets of $M_3^{R1}$ mentioned in Proposition \ref{pr:union2}.

One can see that $Q_3$ is a three-dimensional polyhedron, with one bounded facet,
six unbounded facets, three bounded edges and six unbounded edges.

\begin{figure}
  \includegraphics[scale=.29]{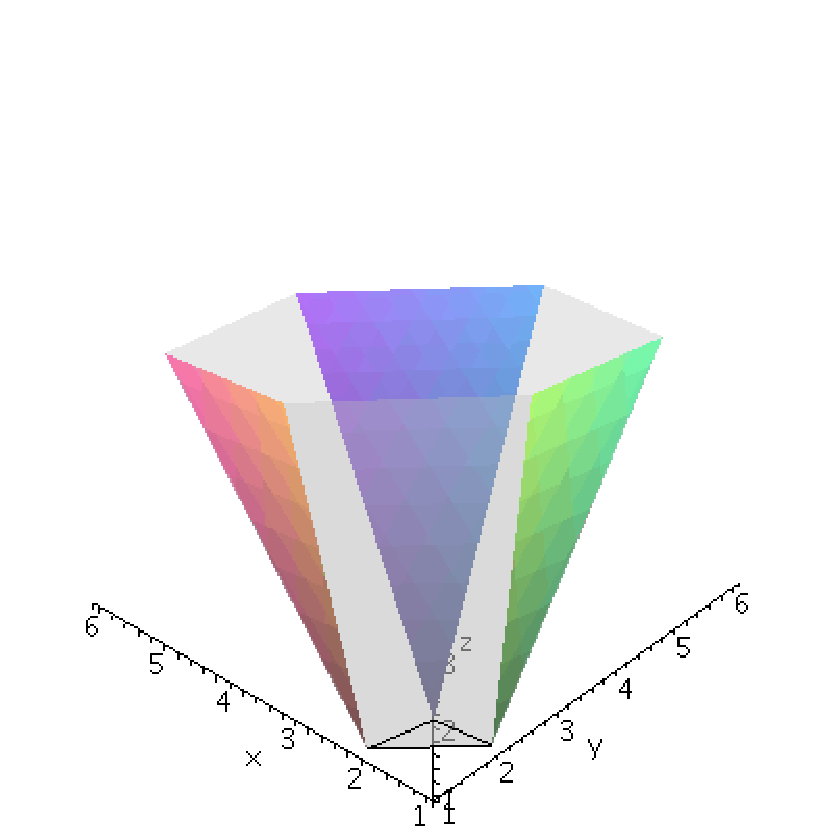}
  \includegraphics[scale=.29]{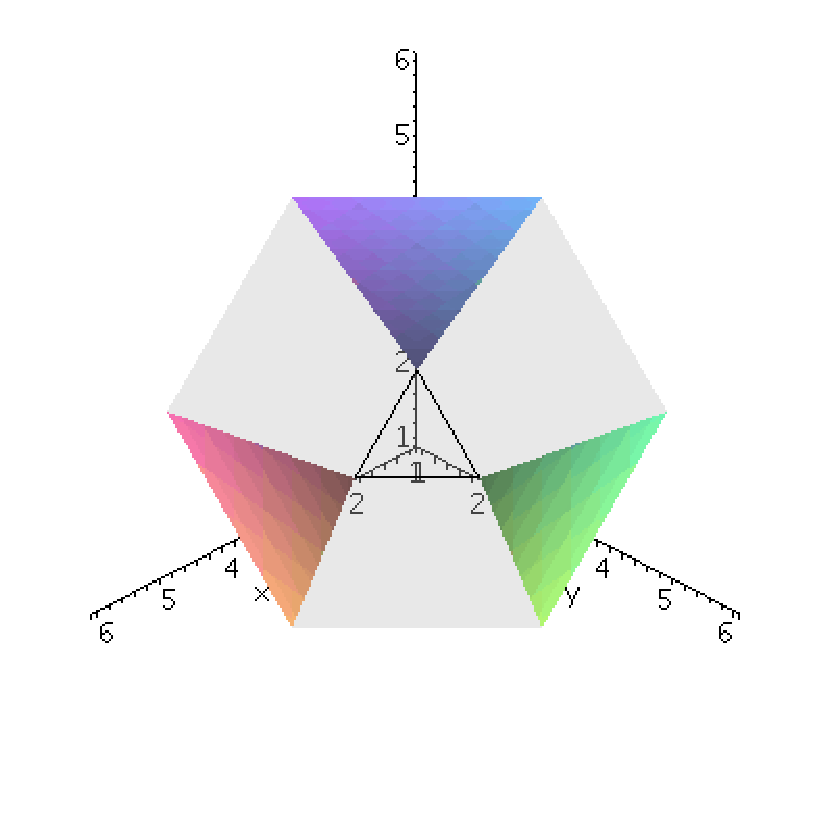}
  \includegraphics[scale=.29]{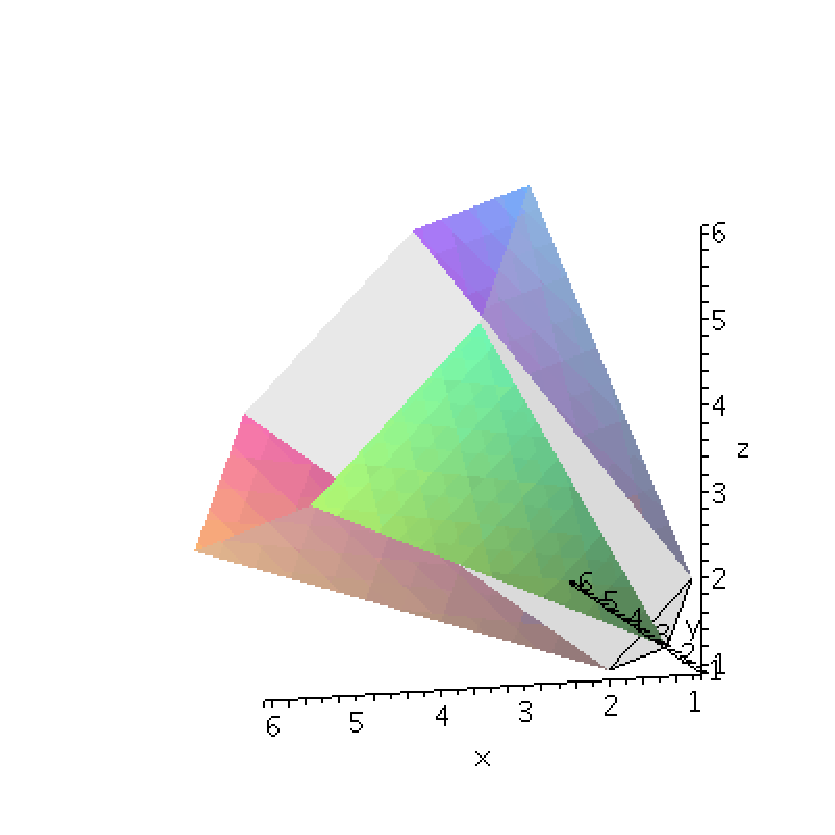}
  \caption{The convex set $Q_3$}
  \label{fig:Q_3}
\end{figure}

For $n \le 3$, $Q_n$ is closed (and therefore a polyhedron). We will show
in Section \ref{se:edges}, however, that $Q_n$ is not closed for
$n \ge 4$.
%\footnote{{\bf Could there be a simpler and more direct proof
%than the present one? If so, we could include it in this subsection.}}
Therefore, we are led to look at the closure of $Q_n$, which we denote by
$\close{Q_n}$.

Our next result shows that there is a close connection between the polyhedron
$\close{Q_n}$, the polytope $P_n$, and the cone CUT$_n$:
\begin{proposition}
$\close{Q_n}$ is the Minkowski sum of $P_n$ and CUT$_n$.
\end{proposition}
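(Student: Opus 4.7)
The plan is to prove both inclusions $\close{Q_n} \subseteq P_n + \text{CUT}_n$ and $P_n + \text{CUT}_n \subseteq \close{Q_n}$, making essential use of the decomposition $M^1(\pi) = D(\pi) + M(\pi)$ from Lemma~\ref{le:sum2}.

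For the first inclusion, Proposition~\ref{pr:union2} gives $M_n^{R1} = \bigcup_{\pi} M^1(\pi) = \bigcup_{\pi}(D(\pi) + M(\pi))$. Each $D(\pi)$ lies in $P_n$, and by Lemma~\ref{le:sum1} each cone $M(\pi)$ is generated by cut metrics, so $M(\pi) \subseteq \text{CUT}_n$. Hence $M_n^{R1} \subseteq P_n + \text{CUT}_n$. The set $P_n + \text{CUT}_n$ is convex, and in fact a polyhedron as the Minkowski sum of a polytope and a polyhedral cone; in particular it is closed. Taking convex hull and then closure yields $\close{Q_n} \subseteq P_n + \text{CUT}_n$.

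For the reverse inclusion, I take $d = p + c$ with $p \in P_n$ and $c \in \text{CUT}_n$, and write $p = \sum_{\pi \in S(n)} \lambda_\pi D(\pi)$ with $\lambda_\pi \geq 0$ and $\sum_\pi \lambda_\pi = 1$. Because every cut metric belongs to $M(\pi)$ for some $\pi$ (as noted after Proposition~\ref{pr:union1}) and each $M(\pi)$ is a cone, a conic decomposition $c = \sum_U \mu_U d_U$ can be regrouped as $c = \sum_{\pi \in S(n)} c_\pi$ with $c_\pi \in M(\pi)$. Ideally one would then write
\begin{equation*}
  p + c = \sum_{\pi \in S(n)} \lambda_\pi \left( D(\pi) + \tfrac{c_\pi}{\lambda_\pi} \right),
\end{equation*}
exhibiting $p + c$ as a convex combination of points in $\bigcup_\pi M^1(\pi) = M_n^{R1}$. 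To make every $\lambda_\pi$ positive, I perturb $p$ to
\begin{equation*}
  p_\varepsilon := (1-\varepsilon) p + \varepsilon\cdot\tfrac{1}{n!}\sum_{\pi \in S(n)} D(\pi) \in P_n,
\end{equation*}
whose coefficient on every $D(\pi)$ is at least $\varepsilon/n! > 0$. The identity above applied to $p_\varepsilon + c$ (with $c_\pi/\lambda_\pi^\varepsilon \in M(\pi)$ by conicity) places $p_\varepsilon + c$ in $Q_n$, and letting $\varepsilon \to 0$ gives $p + c \in \close{Q_n}$.

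The main obstacle, and the reason the closure is needed, is precisely this division by $\lambda_\pi$: if some $D(\pi)$ gets zero weight in $p$ but $c$ requires a contribution from the unbounded cone $M(\pi)$, the decomposition cannot be realized exactly inside $Q_n$, only approximated via the perturbation. This already hints at the fact, to be established later, that $Q_n$ itself need not be closed.
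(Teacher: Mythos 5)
Your proof is correct and follows the same strategy as the paper, hinging on Lemma~\ref{le:sum2} and the fact that each $D(\pi)$ is an extreme point of $P_n$ while each $M(\pi)\subseteq\mathrm{CUT}_n$. Where you add genuine value is in the reverse inclusion, which the paper dispatches with ``proved similarly, noting that every cut metric is of the form $D(\chi^{\pi^{-1}([k])})$'': you make explicit the regrouping of a conic decomposition of $c$ according to which cone $M(\pi)$ each $d_U$ lies in, the perturbation of $p$ so that every $\lambda_\pi>0$ and the division $c_\pi/\lambda_\pi$ is defined, and the limit $\varepsilon\to 0$. You also correctly remark that this limit is not a technical artifact: since $Q_n$ is later shown to be non-closed for $n\ge 4$, the inclusion $P_n+\mathrm{CUT}_n\subseteq Q_n$ would be false, so one must pass to $\close{Q_n}$. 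An alternative, perhaps closer to what the paper intends, avoids the perturbation: $P_n+\mathrm{CUT}_n$ is a polyhedron whose vertices are among the $D(\pi)$ (all in $Q_n$) and whose extreme recession directions are the cut metrics $d_U$; each $d_U$ lies in $M(\pi)$ for a $\pi$ incident to $U$, so $D(\pi)+\RR_+ d_U\subseteq M^1(\pi)\subseteq Q_n$, hence $d_U$ is a recession direction of the closed convex set $\close{Q_n}$, and the whole polyhedron is contained in $\close{Q_n}$. Both routes are valid; yours is more elementary and self-contained.
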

\begin{proof}
We use the same notation as in the previous subsection. By definition, every
point in $M_n^{R1}$ belongs to $M^1(\pi)$ for some $\pi \in S(n)$. From
Lemma \ref{le:sum2}, every point in $M^1(\pi)$ is the sum of the point $D(\pi)$
and a point in the cut cone CUT$_n$. Moreover, the point $D(\pi)$ is an
extreme point of~$P_n$. Thus, every point in $M_n^{R1}$ is the sum of an
extreme point of~$P_n$ and a point in CUT$_n$. Since $\close{Q_n}$ is the
closure of the convex hull of $M_n^{R1}$, it must be contained in the Minkowski
sum of $P_n$ and CUT$_n$. The reverse direction is proved similarly, noting
that every cut metric is of the form $D(\chi^{\pi^{-1}([k])})$ for some $\pi \in S(n)$
and some $k \in [n-1]$.
\end{proof}

This immediately implies the following result:
\begin{corollary}
$\close{Q_n}$ is full-dimensional (i.$\,$e., of dimension $\binom{n}{2}$).
\end{corollary}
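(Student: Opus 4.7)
The plan is to leverage the preceding proposition, $\overline{Q_n} = P_n + \mathrm{CUT}_n$, together with the dimension result for $P_n$ recalled from \cite{AL09}. That result says $\dim P_n = \binom{n}{2}-1$, with affine hull precisely the hyperplane
\[
H := \Bigl\{ d \in \SzM{n} \;\Bigm|\; \sum_{\{i,j\}\subset[n]} d(i,j) = \tbinom{n+1}{3} \Bigr\}.
\]
Since $\binom{n}{2}$ is the ambient dimension of $\SzM{n}$, it suffices to exhibit one point of $\overline{Q_n}$ not lying in $H$, because then the affine hull of $\overline{Q_n}$ strictly contains $H$ and is therefore all of $\SzM{n}$.

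To produce such a point, I would take any extreme point $D(\pi)$ of $P_n$ (so $\sum_{i<j} D(\pi)(i,j) = \binom{n+1}{3}$) and add a single cut metric $d_U$ with $\emptyset \neq U \subsetneq [n]$. Since every cut metric lies in $\mathrm{CUT}_n$, the proposition gives $D(\pi) + d_U \in \overline{Q_n}$. But
\[
\sum_{\{i,j\}\subset[n]} d_U(i,j) = |U|\cdot|\cplmt U| \;>\; 0,
\]
so the sum of coordinates of $D(\pi)+d_U$ strictly exceeds $\binom{n+1}{3}$, placing it outside $H$. Combined with any maximal affinely independent family of $\binom{n}{2}$ points from $P_n \subset \overline{Q_n}$, this yields $\binom{n}{2}+1$ affinely independent points in $\overline{Q_n}$, establishing full-dimensionality.

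There is really no obstacle here: the argument is a one-line dimension count once the Minkowski-sum description is in hand. The only small thing to be careful about is citing the affine hull of $P_n$ correctly from \cite{AL09}, and observing that $\mathrm{CUT}_n$ contains at least one direction transverse to that affine hull (which is immediate from $|U|\cdot|\cplmt U|>0$ for any proper nonempty $U$).
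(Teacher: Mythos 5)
Your proof is correct and takes essentially the same approach the paper intends: the paper states the corollary as an immediate consequence of $\close{Q_n}=P_n+\mathrm{CUT}_n$ without spelling out the argument, and your reasoning (take the $\binom{n}{2}$ affinely independent points witnessing $\dim P_n=\binom{n}{2}-1$ and add one point $D(\pi)+d_U$, which leaves the affine hull $H$ of $P_n$ because $\sum_{i<j}d_U(i,j)=|U|\,|\cplmt U|>0$) is exactly the obvious way to make that "immediately implies" precise.
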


We also have the following result:
\begin{proposition} \label{pr:boundedfacet}
$P_n$ is the unique bounded facet of $\close{Q_n}$.
\end{proposition}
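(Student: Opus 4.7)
The plan is to leverage the Minkowski decomposition $\close{Q_n} = P_n + \text{CUT}_n$ from the preceding proposition, together with the fact from \cite{AL09} that $P_n$ lies in the hyperplane $\sigma(d) = \binom{n+1}{3}$, where $\sigma(d) := \sum_{\{i,j\} \subset [n]} d(i,j)$. First I check that $P_n$ itself is a bounded facet. Any $d \in \close{Q_n}$ decomposes as $d = p + c$ with $p \in P_n$ and $c \in \text{CUT}_n$, so $\sigma(d) = \binom{n+1}{3} + \sigma(c)$; expanding $c = \sum_U \lambda_U d_U$ with $\lambda_U \ge 0$ and using $\sigma(d_U) = |U|\,(n - |U|) > 0$ for every proper nonempty $U \subset [n]$, this yields $\sigma(d) \ge \binom{n+1}{3}$ with equality iff $c = 0$. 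Thus $\{ d \in \close{Q_n} : \sigma(d) = \binom{n+1}{3} \} = P_n$, exhibiting $P_n$ as a face of $\close{Q_n}$; a dimension count ($\binom{n}{2} - 1 = \dim \close{Q_n} - 1$) shows it is a facet, and it is bounded because $P_n$ is a polytope.

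For uniqueness, let $F = \{ d \in \close{Q_n} : a \bullet d = b \}$ be an arbitrary bounded facet, cut out by a valid inequality $a \bullet d \ge b$. Since the recession cone of $\close{Q_n}$ equals $\text{CUT}_n$, validity forces $a \bullet c \ge 0$ for every $c \in \text{CUT}_n$; and if $a \bullet c = 0$ for some nonzero $c \in \text{CUT}_n$, the recession cone of $F$ would contain the ray $\RR_+ c$, contradicting boundedness. Hence $a$ is strictly positive on $\text{CUT}_n \setminus \{0\}$.

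To conclude, take any $d \in F$ and decompose $d = p + c$ with $p \in P_n$, $c \in \text{CUT}_n$. Combining $b = a \bullet p + a \bullet c$ with $a \bullet p \ge b$ (which holds because $P_n \subseteq \close{Q_n}$) and $a \bullet c \ge 0$ forces $a \bullet c = 0$; strict positivity of $a$ on nonzero cut vectors then forces $c = 0$ in a conic expansion $c = \sum_U \lambda_U d_U$. Therefore $F \subseteq P_n$, and $F = \{ p \in P_n : a \bullet p = b \}$ is a face of $P_n$ of the same dimension as $P_n$, so $F = P_n$. The one delicate point is the passage from boundedness of $F$ to the strict-positivity of $a$ on $\text{CUT}_n \setminus \{0\}$; once that is established, the rest is routine bookkeeping against the decomposition $\close{Q_n} = P_n + \text{CUT}_n$.
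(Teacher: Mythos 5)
Your first half---establishing that $\sigma(d) := \sum_{\{i,j\}\subset[n]} d(i,j) \ge \binom{n+1}{3}$ is valid for $\close{Q_n}$ via the Minkowski decomposition, that the exposed face is exactly $P_n$, and that the dimension count makes it a facet---is exactly the paper's argument.

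For uniqueness the two proofs diverge. The paper finishes in one line: since $\close{Q_n}=P_n+\mathrm{CUT}_n$ with $P_n$ a polytope and $\mathrm{CUT}_n$ a cone, every extreme point of $\close{Q_n}$ lies in $P_n$; any bounded facet is a polytope equal to the convex hull of extreme points of $\close{Q_n}$, hence contained in $P_n$, and by the dimension count must equal $P_n$. You instead take an arbitrary bounded facet $F=\{d\in\close{Q_n}: a\bullet d = b\}$, observe that the recession cone of $\close{Q_n}$ is $\mathrm{CUT}_n$, note that $a$ must be nonnegative on $\mathrm{CUT}_n$ for validity and strictly positive on $\mathrm{CUT}_n\setminus\{0\}$ for boundedness of $F$ (otherwise a zero direction would lie in $\mathrm{recc}(F)$), and then deduce from $d=p+c$ that $a\bullet c=0$ and hence $c=0$, so $F\subseteq P_n$. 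Both arguments are correct; yours is more explicit and constructive (it tells you what the defining inequality of any bounded facet must look like), while the paper's extreme-point observation is slicker and avoids any discussion of recession cones. The key structural fact that does the work in either case is the same: the Minkowski sum with a pointed cone $\mathrm{CUT}_n$ confines all extreme points (and hence all bounded facets) to the polytope summand.

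One small remark: once you have established that $a$ is strictly positive on $\mathrm{CUT}_n\setminus\{0\}$, the equality $a\bullet c=0$ for $c\in\mathrm{CUT}_n$ immediately gives $c=0$; there is no need to pass to a conic expansion $c=\sum_U\lambda_U d_U$ at the end.
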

\begin{proof}
As mentioned in the previous section, all points in $P_n$ satisfy the equation
$\sum_{\{i,j\} \subset [n]} d(i,j) = \binom{n+1}{3}$. Moreover, every point in
CUT$_n$ satisfies $\sum_{\{i,j\} \subset [n]} d(i,j) > 0$. Since $\close{Q_n}$
is the Minkowski sum of $P_n$ and CUT$_n$, it follows that the inequality
$\sum_{\{i,j\} \subset [n]} d(i,j) \ge \binom{n+1}{3}$ is valid for
$\close{Q_n}$ and that $P_n$ is the face of $\close{Q_n}$ exposed by this
inequality. Since $\close{Q_n}$ and $P_n$ are of dimension $\binom{n}{2}$ and
$\binom{n}{2}-1$, respectively, $P_n$ is a facet of $\close{Q_n}$. It must be
the unique bounded facet, since all extreme points of $\close{Q_n}$ are in
$P_n$.
\end{proof}

In the next section, we will explore the connection between $\close{Q_n}$,
$P_n$ and CUT$_n$ in more detail. To close this section, we make an observation
about how the individual `pieces' of $M_n^{R1}$, called the $M^1(\pi)$ in the
previous subsection, are positioned within $\close{Q_n}$:

\begin{proposition}\label{prop:n-1Dimensional}
For any $\pi \in S(n)$, the set $M^1(\pi)$ is an $(n-1)$-dimensional face
of $\close{Q_n}$.
\end{proposition}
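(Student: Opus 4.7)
The plan is to exhibit a single linear inequality valid on $\close{Q_n}$ whose zero set on $\close{Q_n}$ coincides with $M^1(\pi)$; combined with $\dim M^1(\pi)=n-1$ from Lemma~\ref{le:sum2}, this identifies $M^1(\pi)$ as a face of dimension $n-1$.

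The inequality of choice is the iterated triangle inequality along the chain $\pi(1),\pi(2),\ldots,\pi(n)$, namely
\[
h_\pi(d) \;:=\; \sum_{k=1}^{n-1} d(\pi(k),\pi(k+1)) \;-\; d(\pi(1),\pi(n)) \;\ge\; 0.
\]
This holds for every metric, hence on $M_n^{R1}$, and therefore on $\close{Q_n}$ by linearity and closure. For $d\in M^1(\pi)$, a realization $x$ with $x_{\pi(1)}\le\cdots\le x_{\pi(n)}$ makes the right-hand side telescope to $x_{\pi(n)}-x_{\pi(1)}=d(\pi(1),\pi(n))$, giving $h_\pi(d)=0$; thus $M^1(\pi)\subseteq F_\pi:=\{d\in\close{Q_n}:\,h_\pi(d)=0\}$.

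For the reverse inclusion I invoke the Minkowski decomposition $\close{Q_n}=P_n+\mathrm{CUT}_n$ proved above and write $d\in F_\pi$ as $d=p+c$ with $p\in P_n$ and $c\in\mathrm{CUT}_n$. Since $h_\pi\ge 0$ separately on $P_n$ and on $\mathrm{CUT}_n$, the equality $h_\pi(d)=0$ forces $h_\pi(p)=h_\pi(c)=0$. Decomposing $p=\sum_\sigma\lambda_\sigma D(\sigma)$ into extreme points of $P_n$ and $c=\sum_U\mu_U d_U$ into cut generators, and exploiting nonnegativity of $h_\pi$ on each generator, the problem reduces to two case analyses: (i)~$h_\pi(D(\sigma))=0$ iff the composition $\sigma\circ\pi$ is monotone, which singles out exactly two permutations that yield the same extreme point and hence forces $p=D(\pi)$ as in Lemma~\ref{le:sum2}; (ii)~$h_\pi(d_U)=0$ iff the chain $\pi(1),\ldots,\pi(n)$ crosses the bipartition $(U,\cplmt U)$ at most once, which, outside the trivial case $U\in\{\emptyset,[n]\}$, forces $U$ (up to complement) to be one of the prefixes $\pi^{-1}([1]),\ldots,\pi^{-1}([n-1])$. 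By Lemma~\ref{le:sum1} the conclusion of~(ii) is precisely $c\in M(\pi)$, so $d=D(\pi)+c\in M^1(\pi)$ via Lemma~\ref{le:sum2}. The main obstacle is the combinatorial matching in~(ii): one must recognize that the cuts tight on $h_\pi$ are exactly the $n-1$ prefix cuts generating $M(\pi)$, so that $F_\pi$ equals $M^1(\pi)$ rather than merely contains it.
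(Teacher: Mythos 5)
Your proof is correct, but it proceeds along a different (and in fact more explicit) route than the paper's. The paper, after reducing to $\pi=\id$, exposes $M^1(\id)$ via the \emph{system} of all path triangle equations $d(i,j)+d(j,k)=d(i,k)$ for $i<j<k$, asserts that $M^1(\id)$ is precisely the set of points in $M_n^{R1}$ satisfying all of these, and then concludes rather tersely that $M^1(\id)$ is a face of $\close{Q_n}$ (the dimension count being Lemma~\ref{le:sum2}). You instead aggregate the path triangle inequalities into the single canonical inequality $h_\pi(d)=\sum_k d(\pi(k),\pi(k+1))-d(\pi(1),\pi(n))\ge 0$; since $\close{Q_n}$ satisfies all triangle inequalities, $h_\pi(d)=0$ forces each summand to vanish, so your one inequality exposes the same face. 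The real dividend of your write-up is the reverse inclusion $F_\pi\subseteq M^1(\pi)$, which the paper passes over quickly: you carry it out by decomposing $d=p+c$ via $\close{Q_n}=P_n+\mathrm{CUT}_n$, splitting $p$ into extreme points $D(\sigma)$ and $c$ into cut generators $d_U$, and then classifying which generators are tight on $h_\pi$ — namely the (two, coinciding) layouts that are monotone along the chain and the $n-1$ prefix cuts that cross the chain exactly once — whence $d\in D(\pi)+M(\pi)=M^1(\pi)$. This decomposition step is exactly what is needed to justify the paper's ``Thus, $M^1(\pi)$ is a face of $\close{Q_n}$'' rigorously, so your version is a genuine sharpening. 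One small inherited blemish: as in the paper's statement of Lemma~\ref{le:sum2}, the apex of $M^1(\pi)$ and the layout with $\sigma\circ\pi$ monotone is really $D(\pi^{-1})$ rather than $D(\pi)$ (these coincide when $\pi=\id$, which masks the issue), but this is a consistent $\pi$-versus-$\pi^{-1}$ convention slip in the source and does not affect the validity of your argument.
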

\begin{proof}
By definition, $\close{Q_n}$ satisfies all triangle inequalities. Now, without
loss of generality, suppose that $\pi$ is the identity permutation. Every
point in $M^1(\pi)$ satisfies all of the following triangle inequalities at
equality:
\[
d(i,j) + d(j,k) \ge d(i,k) \qquad (\forall 1 \le i < j < k \le n).
\]
Moreover, no other point in $M_n^{R1}$ does so. Thus, $M^1(\pi)$ is a
face of $\close{Q_n}$. It was shown to be $(n-1)$-dimensional in the
previous subsection. 
\end{proof}

%%% Local Variables: 
%%% mode: latex
%%% TeX-master: "Qn"
%%% End: 

\section{Inequalities Defining Facets of $\close{Q_n}$} \label{se:facets}

In this section, we study linear inequalities that define {\it facets}\/ of
$\close{Q_n}$, i.$\,$e., faces of dimension $\binom{n}{2}-1$. Subsection
\ref{sub:facets1} presents some general results about such inequalities,
whereas Subsection \ref{sub:facets2} lists some specific inequalities.

\subsection{General results on facet-defining inequalities}
\label{sub:facets1}

In this subsection, we prove a structural result about inequalities that
define facets of $\close{Q_n}$, and show how this can be used to construct
facets of $\close{Q_n}$ in a mechanical way from facets of either $P_n$ or
CUT$_n$.

We will need the following definition, taken from \cite{AL09}:
\begin{definition}[Amaral \& Letchford, 2009]
Let $\alpha^Td \ge \beta$ be a linear inequality, where
$\alpha, d \in \RR^{\binom{n}{2}}$. The inequality is said to be `canonical'
if:
\begin{equation} \label{eq:canonical}
\min_{\emptyset \ne S \subset [n]} \sum_{i \in S} \sum_{[n] \setminus S} \alpha_{ij} = 0.
\end{equation}
\end{definition}

By definition, an inequality $\alpha^Td \ge 0$ defines a proper face of CUT$_n$
if and only if it is canonical. In \cite{AL09}, it is shown that every facet of
$P_n$ is defined by a canonical inequality. The following lemma is the analogous
result for $\close{Q_n}$:
\begin{lemma}
Every unbounded facet of $\close{Q_n}$ is defined by a canonical inequality.
\end{lemma}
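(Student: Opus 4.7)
The plan is to exploit the decomposition $\close{Q_n}=P_n+\text{CUT}_n$ established in the previous subsection. Because $P_n$ is a polytope and $\text{CUT}_n$ a polyhedral cone, $\close{Q_n}$ is a polyhedron whose recession cone is exactly $\text{CUT}_n$. Consequently, any inequality $\alpha\bullet d\ge\beta$ valid on $\close{Q_n}$ must be nonnegative on every recession direction. Evaluating on the extreme rays of $\text{CUT}_n$, which are the cut metrics $d_U$ for $\emptyset\ne U\subsetneq[n]$, gives $\alpha\bullet d_U\ge 0$, and this is exactly
\[
\sum_{i\in U}\sum_{j\in[n]\setminus U}\alpha_{ij}\;\ge\;0
\qquad(\forall\,\emptyset\ne U\subsetneq[n]),
\]
up to a factor of $2$ coming from the convention for the inner product on $\SzM n$. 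In particular, the minimum in \eqref{eq:canonical} is already at least $0$ for every valid inequality, irrespective of the facet being bounded or unbounded.

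Now assume $\alpha\bullet d\ge\beta$ defines an unbounded facet $F$ of $\close{Q_n}$. Unboundedness of the polyhedron $F$ yields a nonzero direction $r$ in its recession cone. Since $F$ is the face of $\close{Q_n}$ exposed by $\alpha$, the recession cone of $F$ coincides with the face of $\text{CUT}_n$ exposed by $\alpha$; hence $r\in\text{CUT}_n$ and $\alpha\bullet r=0$. Writing $r=\sum_U\lambda_U d_U$ as a nonnegative combination of cut metrics with at least one $\lambda_U>0$, and combining this with the validity bounds $\alpha\bullet d_U\ge 0$, the identity
\[
0\;=\;\alpha\bullet r\;=\;\sum_U\lambda_U\bigl(\alpha\bullet d_U\bigr)
\]
forces $\alpha\bullet d_{U_0}=0$ for at least one proper nonempty $U_0$. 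Thus the minimum in \eqref{eq:canonical} is both nonnegative and attained at $U_0$, so it equals $0$; this is precisely the canonicity condition.

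No step here poses a serious obstacle; the only bookkeeping subtlety is reconciling $\alpha\bullet d_U$ in $\SzM n$ with the ordered double sum used in the definition of ``canonical'', but they differ only by a factor of $2$ so the equality to zero transfers directly. The crux is the conceptual point that an unbounded facet must contain a recession ray, and every such ray, being a nonnegative combination of cut metrics on which $\alpha$ is already nonnegative, can be zero against $\alpha$ only if some individual cut metric is.
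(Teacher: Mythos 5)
Your proof is correct and follows essentially the same route as the paper's: the Minkowski-sum decomposition $\close{Q_n}=P_n+\text{CUT}_n$ gives validity on the cut cone and hence nonnegativity of the minimum in the canonicity condition \eqref{eq:canonical}, while unboundedness of the facet forces some cut metric to lie in the exposed face, so the minimum is exactly zero. You merely spell out more carefully (via the recession cone of the face being the face of $\text{CUT}_n$ exposed by $\alpha$, and decomposing a recession ray into cut metrics) the step the paper asserts without elaboration, namely that an unbounded facet contains an extreme ray of $\text{CUT}_n$ on which $\alpha$ vanishes.
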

\begin{proof}
Suppose that the inequality $\alpha^Td \ge \beta$ defines an unbounded facet of
$\close{Q_n}$. Since $\close{Q_n}$ is the Minkowski sum of $P_n$ and CUT$_n$, the
inequality must be valid for CUT$_n$. Therefore, the left-hand side of
(\ref{eq:canonical}) must be non-negative. Moreover, since the inequality defines
an unbounded facet, there must be at least one extreme ray of CUT$_n$ satisfying
$\alpha^Td = 0$. Therefore the left-hand side of (\ref{eq:canonical}) cannot be
positive.
\end{proof}
We remind the reader that only one facet of $\close{Q_n}$ is bounded
(Proposition \ref{pr:boundedfacet}).

Now, we show how to derive facets of $\close{Q_n}$ from facets of $P_n$:
\begin{proposition} \label{pr:fromPn}
Let $F$ be any facet of $P_n$, and let $\alpha^Td \ge \beta$ be the canonical
inequality that defines it. This inequality defines a facet of $\close{Q_n}$
as well.
\end{proposition}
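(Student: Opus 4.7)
The plan is to establish two things: validity of $\alpha^\Tp d \ge \beta$ on $\close{Q_n}$, and that the face it exposes has dimension $\binom{n}{2}-1$. Validity follows directly from the Minkowski decomposition $\close{Q_n}=P_n+\mathrm{CUT}_n$: the inequality is valid on $P_n$ with right-hand side $\beta$ by hypothesis, and canonicity of $\alpha$ means $\alpha^\Tp d_U \ge 0$ for every cut metric, hence $\alpha^\Tp d \ge 0$ on $\mathrm{CUT}_n$. Adding a point of $P_n$ and a point of $\mathrm{CUT}_n$ gives $\alpha^\Tp d \ge \beta + 0 = \beta$.

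For the dimension bound, I would exhibit an $(\binom{n}{2}-1)$-dimensional set of points in $\close{Q_n}$ satisfying $\alpha^\Tp d = \beta$. Since $F$ is a facet of $P_n$, it has dimension $\binom{n}{2}-2$ (recall $P_n$ has dimension $\binom{n}{2}-1$). Canonicity guarantees the minimum in \eqref{eq:canonical} is attained, so there is a nonempty proper $U\subset[n]$ with $\alpha^\Tp d_U = 0$. Then every point of the form $p + t\,d_U$ with $p\in F$ and $t \ge 0$ lies in $\close{Q_n}$ (by the Minkowski sum characterisation) and attains $\alpha^\Tp d = \beta$. So the face exposed by the inequality contains $F + \mathrm{cone}(d_U)$.

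The key observation is that the ray direction $d_U$ is not parallel to the affine hull of $F$. Indeed, $\mathrm{aff}(F) \subseteq \mathrm{aff}(P_n)$, which by the result of Amaral--Letchford cited in Subsection~\ref{sub:lit2} is the hyperplane $\sum_{\{i,j\}\subset[n]} d(i,j) = \binom{n+1}{3}$; but the cut metric $d_U$ satisfies $\sum_{\{i,j\}} d_U(i,j) = |U|\cdot|\cplmt U| > 0$, so translating $F$ by a positive multiple of $d_U$ leaves this hyperplane. Hence $F + \mathrm{cone}(d_U)$ has dimension $(\binom{n}{2}-2)+1 = \binom{n}{2}-1$, giving the desired lower bound on the dimension of the face. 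Since the face is proper in the full-dimensional set $\close{Q_n}$, its dimension is exactly $\binom{n}{2}-1$, so $\alpha^\Tp d \ge \beta$ defines a facet.

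The only mild obstacle is being careful about the "reverse" validity on the cut cone: we need $\alpha^\Tp d \ge 0$ for arbitrary nonnegative combinations of cut metrics, which follows because $\mathrm{CUT}_n$ is the conic hull of the $d_U$ and $\alpha^\Tp d_U \ge 0$ for every $U$ by canonicity. Everything else is bookkeeping with the Minkowski sum and a dimension count.
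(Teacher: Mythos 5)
Your proof is correct and takes essentially the same approach as the paper's: validity from the Minkowski decomposition $\close{Q_n}=P_n+\mathrm{CUT}_n$, then a dimension count using a cut metric $d_U$ with $\alpha^\Tp d_U = 0$ obtained from canonicity. You even supply a detail the paper glosses over, namely that $d_U$ is affinely independent from $\mathrm{aff}(F)$ because it leaves the hyperplane $\sum_{\{i,j\}}d(i,j)=\binom{n+1}{3}$ containing $P_n$.
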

\begin{proof}
The fact that the inequality is valid for $\close{Q_n}$ follows from the fact
that $\close{Q_n}$ is the Minkowski sum of $P_n$ and CUT$_n$. Now, since $F$
is a facet of $P_n$, there exist $\binom{n}{2}-1$ affinely-independent vertices
of $P_n$ that satisfy the inequality at equality. Moreover, since the inequality
is canonical, there exists at least one extreme ray of CUT$_n$ that satisfies
$\alpha^Td = 0$. Since $\close{Q_n}$ is the Minkowski sum of $P_n$ and CUT$_n$,
there exist $\binom{n}{2}$ affinely-independent points in $\close{Q_n}$ that
satisfy the inequality at equality. Thus, the inequality defines a facet of
$\close{Q_n}$.
\end{proof}

Now, we show how to derive facets of $\close{Q_n}$ from facets of CUT$_n$:
\begin{proposition} \label{pr:fromCUTn}
Let $\alpha^Td \ge 0$ define a facet of CUT$_n$, and let $\beta$ be the
minimum of $\alpha^Td$ over all $d \in P_n$. Then the inequality
$\alpha^Td \ge \beta$ define a facet of $\close{Q_n}$.
\end{proposition}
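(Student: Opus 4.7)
The plan is to mirror the proof of Proposition~\ref{pr:fromPn}, interchanging the roles played by $P_n$ and $\text{CUT}_n$. Since $\close{Q_n}=P_n+\text{CUT}_n$, we have two tasks: establish validity of $\alpha^Td\ge\beta$ on $\close{Q_n}$, and then exhibit $\binom{n}{2}$ affinely independent points of $\close{Q_n}$ at which the inequality holds with equality.

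For validity, I would take an arbitrary $d\in\close{Q_n}$, write $d=p+c$ with $p\in P_n$ and $c\in\text{CUT}_n$, and combine the two lower bounds: $\alpha^Tp\ge\beta$ by the very definition of $\beta$ as the minimum of $\alpha^Td$ over $P_n$, and $\alpha^Tc\ge 0$ because $\alpha^Td\ge 0$ is valid on $\text{CUT}_n$. Adding these gives $\alpha^Td\ge\beta$. Note that the minimum defining $\beta$ is actually attained since $P_n$ is a polytope; pick such a minimizer $p^*\in P_n$, so that $\alpha^Tp^*=\beta$.

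For dimension, I would exploit the fact that the facet of $\text{CUT}_n$ defined by $\alpha^Td\ge 0$ has dimension $\binom{n}{2}-1$ (recall $\text{CUT}_n$ is full-dimensional and each cut metric generates an extreme ray). Hence there exist cut metrics $c_1,\ldots,c_{\binom{n}{2}-1}$, linearly independent in $\SzM{n}$, all satisfying $\alpha^Tc_i=0$. Now consider the points
\[
p^*,\quad p^*+c_1,\quad p^*+c_2,\quad \ldots,\quad p^*+c_{\binom{n}{2}-1}.
\]
Each lies in $P_n+\text{CUT}_n=\close{Q_n}$, and each attains $\alpha^Td=\beta$ by the additive decomposition above. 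Affine independence of these $\binom{n}{2}$ points reduces, after subtracting $p^*$, to linear independence of $c_1,\ldots,c_{\binom{n}{2}-1}$, which is already granted. So the face $\{d\in\close{Q_n}:\alpha^Td=\beta\}$ has dimension at least $\binom{n}{2}-1$, and since $\close{Q_n}$ is $\binom{n}{2}$-dimensional (by the corollary to the Minkowski-sum proposition), this face is a facet.

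The argument is largely symmetric to Proposition~\ref{pr:fromPn}, so I expect no real obstacle; the only subtlety is remembering that the minimum in the definition of $\beta$ is attained, which is automatic from compactness of $P_n$, and that one should take $\binom{n}{2}-1$ (rather than $\binom{n}{2}$) linearly independent cut metrics on the facet of $\text{CUT}_n$, since a facet of a full-dimensional cone has codimension one.
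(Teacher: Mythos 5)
Your proof is correct and follows the same route as the paper: validity via the Minkowski decomposition $\close{Q_n}=P_n+\text{CUT}_n$, then $\binom{n}{2}-1$ linearly independent cut metrics on the facet of $\text{CUT}_n$ together with a minimizer $p^*\in P_n$ to produce $\binom{n}{2}$ affinely independent points of $\close{Q_n}$ satisfying $\alpha^Td=\beta$. Your write-up is slightly more explicit than the paper's (naming the points $p^*,p^*+c_1,\dots$ and spelling out why affine independence reduces to linear independence), but the content is identical.
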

\begin{proof}
As before, the fact that the inequality $\alpha^Td \ge \beta$ is valid for
$\close{Q_n}$ follows from the fact that $\close{Q_n}$ is the Minkowski sum of
$P_n$ and CUT$_n$. Now, since the inequality $\alpha^Td \ge 0$ defines a facet
of CUT$_n$, there exist $\binom{n}{2}-1$ linearly-independent extreme rays of
CUT$_n$ that satisfy $\alpha^Td = 0$. Moreover, from the definition of $\beta$,
there exists at least one extreme point of $P_n$ that satisfies
$\alpha^Td = \beta$. Since $\close{Q_n}$ is the Minkowski sum of $P_n$ and
CUT$_n$, there exist $\binom{n}{2}$ affinely-independent points in $\close{Q_n}$
that satisfy $\alpha^Td = \beta$. Thus, the inequality $\alpha^Td \ge \beta$
defines a facet of $\close{Q_n}$.
\end{proof}

\subsection{Some specific facet-defining inequalities} \label{sub:facets2}

The results in the previous subsection enable one to derive a wide variety
of facets of $\close{Q_n}$. In this subsection, we briefly examine some
specific valid inequalities; namely, the inequalities mentioned in
\cite{AL09}.

First, we deal with the clique and pure hypermetric inequalities:
\begin{proposition}
The clique inequalities (\ref{eq:clique}) define facets of $\close{Q_n}$
for all $S \subseteq [n]$ with $|S| \ge 2$.
\end{proposition}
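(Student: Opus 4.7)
The plan is to split into the cases $|S|=n$ and $2\le|S|<n$, and invoke Propositions \ref{pr:boundedfacet} and \ref{pr:fromPn} respectively. This way the hard work has already been done: for one case the clique inequality is literally the inequality that exposes the bounded facet, and for the other case it is handed to us as a facet of $P_n$ by the Amaral--Letchford result, provided we can check canonicity.

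When $|S|=n$, the clique inequality reads $\sum_{\{i,j\}\subset[n]} d(i,j)\ge\binom{n+1}{3}$, which is exactly the inequality shown in the proof of Proposition~\ref{pr:boundedfacet} to expose $P_n$ as the unique bounded facet of $\close{Q_n}$; nothing more is required in this case.

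When $2\le|S|<n$, the aim is to apply Proposition~\ref{pr:fromPn}. Two ingredients are needed: that (\ref{eq:clique}) is a facet of $P_n$, and that it is canonical in the sense of (\ref{eq:canonical}). The first is the result from \cite{AL09} cited in Subsection~\ref{sub:lit2}. For the second, write the coefficient vector as $\alpha_{ij}=1$ if $\{i,j\}\subset S$ and $\alpha_{ij}=0$ otherwise. Then for every $\emptyset\ne T\subsetneq[n]$,
\[
  \sum_{i\in T}\sum_{j\in\cplmt T}\alpha_{ij}=|T\cap S|\cdot|\cplmt T\cap S|\ge 0,
\]
with equality whenever $T\supseteq S$. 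Since $|S|<n$, at least one such $T$ exists that is a proper nonempty subset of $[n]$ (for instance $T=S$ if $|S|\ge 1$), so the minimum in (\ref{eq:canonical}) equals $0$ and the inequality is canonical. Proposition~\ref{pr:fromPn} then yields the claim.

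The only genuine content beyond invoking cited results is the short canonicity calculation and the observation that the $|S|=n$ boundary case falls out of Proposition~\ref{pr:boundedfacet} rather than Proposition~\ref{pr:fromPn}; there is no substantial obstacle, since for $|S|=n$ the clique inequality is precisely the defining equation of the affine hull of $P_n$, while for $|S|<n$ the set $S$ fits strictly inside $[n]$ and supplies the cut witnessing canonicity.
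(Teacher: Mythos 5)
Your proof is correct and follows essentially the same route as the paper's: invoking Proposition~\ref{pr:boundedfacet} for $S=[n]$ and Proposition~\ref{pr:fromPn} (via the \cite{AL09} result that clique inequalities are facets of $P_n$) for $2\le|S|<n$. The only difference is that you actually verify canonicity via the calculation $\sum_{i\in T}\sum_{j\in\cplmt T}\alpha_{ij}=|T\cap S|\cdot|\cplmt T\cap S|$ with $T=S$ as witness, whereas the paper merely asserts it; this is a welcome addition, not a deviation.
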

\begin{proof}
It was shown in \cite{AL09} that the clique inequalities define facets of
$P_n$ when $S$ is a proper subset of $[n]$. In this case, the inequalities are
canonical and so, by Proposition \ref{pr:fromPn}, they define facets of
$\close{Q_n}$ as well. The case $S = [n]$ is covered in the proof of
Proposition \ref{pr:boundedfacet}.
\end{proof}

\begin{proposition}
All pure hypermetric inequalities define facets of $\close{Q_n}$.
\end{proposition}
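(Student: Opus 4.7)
The natural plan is to invoke Proposition \ref{pr:fromPn}: it suffices to check that every pure hypermetric inequality (i) defines a facet of $P_n$ and (ii) is canonical. Part (i) is simply quoted from \cite{AL09}, so the only genuine work is verifying canonicality, and that turns out to be a short computation.

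Concretely, write a pure hypermetric inequality with $b \in \{0, \pm 1\}^n$, $\sum_i b_i = 1$ in the normalised form $\alpha^\Tp d \ge 0$ where $\alpha_{ij} = -b_i b_j$ for $\{i,j\} \subset [n]$. For any $\emptyset \ne S \subsetneq [n]$, setting $B_S := \sum_{i \in S} b_i$, one computes
\begin{equation*}
\sum_{i \in S}\sum_{j \in [n] \setminus S} \alpha_{ij}
= -\Bigl(\sum_{i \in S} b_i\Bigr)\Bigl(\sum_{j \notin S} b_j\Bigr)
= -B_S(1-B_S) = B_S(B_S-1).
\end{equation*}
Because $b_i \in \{-1,0,1\}$, $B_S$ is an integer, so this quantity is non-negative, with equality exactly when $B_S \in \{0,1\}$. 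It remains to exhibit at least one non-trivial $S$ realizing $B_S \in \{0,1\}$: assuming the inequality is non-trivial we have both at least one index with $b_i = +1$ and at least one with $b_j = -1$ (otherwise $\sum b_i = 1$ forces exactly one $+1$ and the inequality degenerates to $0 \ge 0$). Taking $S = \{i,j\}$ gives $B_S = 0$, so the minimum in the canonicality condition \eqref{eq:canonical} is attained and equals zero.

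Having established canonicality and quoting from \cite{AL09} that pure hypermetric inequalities define facets of $P_n$, an immediate appeal to Proposition \ref{pr:fromPn} finishes the proof.

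There is no real obstacle here; the only thing to be careful about is excluding the degenerate case where $b$ has exactly one non-zero entry (which yields the trivial $0 \ge 0$ and is therefore not a genuine pure hypermetric inequality). Once this is set aside, the argument is entirely mechanical: all of the hard work of producing enough affinely independent tight points is already absorbed into Proposition \ref{pr:fromPn} via the Minkowski decomposition $\close{Q_n} = P_n + \mathrm{CUT}_n$.
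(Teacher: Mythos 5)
Your canonicality computation is correct, but the hypothesis you import from \cite{AL09} --- that every pure hypermetric inequality defines a facet of $P_n$ --- is not what that reference provides. The present paper only records that pure hypermetric inequalities define facets of $P_n$ ``under mild conditions,'' and those conditions genuinely exclude some cases. The simplest instance already fails: take $b$ with one entry equal to $-1$, two equal to $+1$, and the rest $0$, giving a triangle inequality $d(i,j)+d(i,k)\ge d(j,k)$. An extreme point $D(\pi)$ of $P_n$ is tight for it exactly when $\pi(i)$ lies between $\pi(j)$ and $\pi(k)$; for $n=3$ this holds for only one of the three vertices of the $2$-dimensional $P_3$, and for $n=4$ for only four of the twelve vertices of the $5$-dimensional $P_4$. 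In both cases there are too few tight vertices to span a facet of $P_n$, so Proposition~\ref{pr:fromPn} cannot be applied, and the route through $P_n$ does not establish the claim for \emph{all} pure hypermetric inequalities.

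The paper instead passes through the other summand of the Minkowski decomposition. It invokes \cite{BM86} for the fact that \emph{every} pure hypermetric inequality defines a facet of $\mathrm{CUT}_n$, and then quotes \cite{AL09} only for the much weaker statement that each such inequality is tight at at least one extreme point of $P_n$ (which shows that the minimum $\beta$ in Proposition~\ref{pr:fromCUTn} equals $0$); Proposition~\ref{pr:fromCUTn} then yields the facet of $\close{Q_n}$ directly. This bypasses the $P_n$-facet condition entirely and also renders your canonicality check redundant, since a nonzero homogeneous inequality defining a facet of the cone $\mathrm{CUT}_n$ is automatically canonical. If you wish to repair your proof, replace the appeal to Proposition~\ref{pr:fromPn} by one to Proposition~\ref{pr:fromCUTn} and supply these two ingredients instead.
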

\begin{proof}
It was shown in \cite{BM86} that all pure hypermetric inequalities define
facets of CUT$_n$. It was also shown in \cite{AL09} that every pure hypermetric
inequality is satisfed at equality by at least one extreme point of $P_n$.
The result then follows from Proposition \ref{pr:fromCUTn}. 
\end{proof}

As for the strengthened pure negative-type and strengthened star inequalities,
it was shown in \cite{AL09} that they define facets of $P_n$ under certain
conditions. Since they are canonical, they define facets of $\close{Q_n}$ under
the same conditions. In fact, using the same proof technique used in \cite{AL09},
one can show the following two results:
\begin{proposition}
All strengthened pure negative-type inequalities define facets of $\close{Q_n}$.
\end{proposition}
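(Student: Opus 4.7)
The plan is to combine the two facet-lifting mechanisms from Subsection~\ref{sub:facets1}. Write the strengthened pure negative-type inequality as $\alpha^\Tp d \ge \beta$ with $\alpha_{ij} = -b_i b_j$ and $\beta = \tfrac12 \sum_i |b_i|$, and set $I^{\pm} := \{i : b_i = \pm 1\}$. Validity on $\close{Q_n} = P_n + \mathrm{CUT}_n$ is automatic: the strengthened inequality is valid on $P_n$ by \cite{AL09}, while the unstrengthened version $\alpha^\Tp d \ge 0$ is the pure negative-type inequality, which holds on $\mathrm{CUT}_n \subset \mathrm{NEG}_n$. A direct calculation shows canonicity: for any $\emptyset \ne S \subsetneq [n]$ one has $\sum_{i\in S,\, j\notin S}\alpha_{ij} = \bigl(\sum_{i\in S} b_i\bigr)^2 \ge 0$, which vanishes exactly when $S$ contains the same number of elements of $I^+$ as of $I^-$; in particular there is a rich family of tight cut metrics.

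To prove facetness I would exhibit $\binom{n}{2}$ affinely independent tight points of $\close{Q_n}$. The two natural sources are tight extreme points $D(\pi)$ of $P_n$ and tight cut metrics $d_U$ with $|U\cap I^+|=|U\cap I^-|$. When \cite{AL09} already supplies $\binom{n}{2}-1$ affinely independent tight vertices of $P_n$, a single tight cut completes the list and Proposition~\ref{pr:fromPn} finishes the job. The novel case is when \cite{AL09}'s construction yields only a shorter list $\mathcal V$ of tight vertices; then the dimension deficit must be filled by several linearly independent tight cut metrics, each used to translate a fixed tight vertex in a direction that remains inside the face.

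The main obstacle is this dimension count in the edge cases: showing that, for every admissible $b$, the tight cut metrics span a subspace large enough to augment $\mathcal V$ to affine dimension $\binom{n}{2}-1$. My approach would be to re-use the proof technique of \cite{AL09}: the moves used there to generate new tight layouts correspond to interchanging a single element of $I^+$ with a single element of $I^-$ along the line, and each such interchange changes the associated metric by exactly a tight cut metric. An inductive analysis of these swap directions should produce enough linearly independent tight cut vectors, and gathering the tight vertices together with translates of a single tight vertex by the resulting cuts then yields the required $\binom{n}{2}$ affinely independent tight points, establishing the facet claim.
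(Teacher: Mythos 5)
The paper does not prove this proposition: it simply states that the result follows ``using the same proof technique used in \cite{AL09}'' and omits the details. So there is nothing concrete to compare your argument against. What you have proved is correct as far as it goes: validity on $\close{Q_n}=P_n+\mathrm{CUT}_n$, the canonicity computation $\sum_{i\in S,\,j\notin S}\alpha_{ij}=(\sum_{i\in S}b_i)^2$, and the observation that whenever the inequality is facet-defining for $P_n$ one is done by Proposition~\ref{pr:fromPn}. You also correctly pinpoint the actual content of the proposition: the word ``all'' covers exactly those $b$ that fail the ``mild conditions'' in \cite{AL09}, where the face of $P_n$ has dimension less than $\binom{n}{2}-2$ and one must make up the deficit with tight cut metrics.

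That last step is where your argument is genuinely incomplete, and I would flag one technical inaccuracy in your sketch. If $\pi'$ is obtained from $\pi$ by transposing two adjacent positions $k,k+1$ occupied by $i\in I^+$ and $j\in I^-$, then with $L:=\pi^{-1}(\{1,\dots,k-1\})$ one has $D(\pi')-D(\pi)=d_{L\cup\{j\}}-d_{L\cup\{i\}}$, a \emph{difference} of two cut metrics, not a single cut metric. So the swap directions live in the lineality of the cut cone rather than giving you new tight extreme rays directly, and the linear-independence count you need does not fall out automatically from ``each interchange is a tight cut.'' You would still have to identify which sets $U$ with $|U\cap I^+|=|U\cap I^-|$ give cut metrics lying outside the affine hull of the tight vertices of $P_n$, and show there are enough of them for every admissible $b$ (including the degenerate ones, e.g.\ $|I^+|=|I^-|=1$). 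Without that dimension argument the facetness claim in the hard cases remains open. Your overall strategy --- $\binom{n}{2}$ affinely independent tight points drawn from tight vertices of $P_n$ and tight cut directions --- is the right shape and consistent with what the paper implies, but as written it is a proof sketch with an unverified core step, not a proof.
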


\begin{proposition}
Strengthened star inequalities define facets of $\close{Q_n}$ if and only if
$|S| \neq 4$.
\end{proposition}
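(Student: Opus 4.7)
The plan is to reduce the ``if'' direction to Proposition~\ref{pr:fromPn} via canonicity and the $P_n$-facet theorem of \cite{AL09}, and to handle the ``only if'' direction ($|S|=4$) by transferring a dominance/decomposition argument from \cite{AL09} to $\close{Q_n}$.

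\emph{Canonicity.} Set $s:=|S|$ and read off the non-zero coefficients of $\alpha$: $\alpha_{r,i}=s-1$ for $i\in S$ and $\alpha_{i,j}=-1$ for $\{i,j\}\subset S$. For a non-trivial bipartition $(T,\cplmt T)$, with $k:=|T\cap S|$, a direct edge count gives
\begin{equation*}
\sum_{i\in T,\;j\in\cplmt T}\alpha_{i,j}=
\begin{cases}
(s-k)(s-1-k),&\text{if }r\in T,\\[2pt]
k(k-1),&\text{if }r\notin T.
\end{cases}
\end{equation*}
Both expressions are non-negative and both vanish for suitable $k$ (e.g.\ $k=s-1$ and $k=1$ respectively), so the minimum in~\eqref{eq:canonical} is $0$. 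Hence the inequality is canonical, and the case $|S|\ne 4$ follows immediately from \cite{AL09} combined with Proposition~\ref{pr:fromPn}.

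\emph{The case $|S|=4$.} Here $\lfloor(|S|+1)^2(|S|-1)/12\rfloor=\lfloor 75/12\rfloor=6$, and the $1/4$ slack to the floor is precisely what makes the inequality non-facet for $P_n$ in \cite{AL09}. The plan is to display the $|S|=4$ strengthened star inequality as a non-negative combination of other valid inequalities for $\close{Q_n}$ --- for example, the four strengthened star inequalities on the $3$-element sub-stars of $S$, plus a clique correction on $S$ or on $S\cup\{r\}$ --- so that the combined left-hand side equals $\alpha$ and the combined right-hand side equals or exceeds $6$. Because each component is already facet-defining for $\close{Q_n}$ by the previous propositions of Subsection~\ref{sub:facets2}, while the joint equality face of the combination is strictly larger than that of the original inequality, the $|S|=4$ inequality is redundant modulo a facet of $\close{Q_n}$.

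The main obstacle is carrying out this decomposition inside $\close{Q_n}$ rather than merely inside $P_n$: each component inequality must be valid on the cut-cone summand as well, and the combination must be \emph{exact} (not just feasible) at every point of the purported equality face. Adapting the coefficient-matching of \cite{AL09} to respect the Minkowski-sum structure $\close{Q_n}=P_n+\mathrm{CUT}_n$ is the delicate step; once the identity is checked, both halves of the biconditional are in hand.
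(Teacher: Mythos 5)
The paper itself gives no proof here: it explicitly states ``We omit the proofs, for the sake of brevity,'' remarking only that the argument follows the technique of \cite{AL09}. So the comparison is necessarily against that hint, not against a written-out argument.

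Your treatment of the ``if'' direction is essentially sound and matches the route the paper clearly intends. The coefficient count establishing canonicity is correct: writing $s=|S|$ and $k=|T\cap S|$, the cut through $(T,\cplmt T)$ has $\alpha$-value $(s-k)(s-1-k)$ when $r\in T$ and $k(k-1)$ otherwise, both nonnegative and both zero for appropriate $k$, so the minimum in~\eqref{eq:canonical} is $0$. Given the facet result of \cite{AL09} for $P_n$ when $|S|\neq 4$, Proposition~\ref{pr:fromPn} then transfers it to $\close{Q_n}$. (One small caveat worth noting: for $|S|=2$ the strengthened star is the triangle inequality with right-hand side $0$, and one may need Proposition~\ref{pr:fromCUTn} rather than~\ref{pr:fromPn} if that inequality does not define a facet of $P_n$; this is a detail but should be checked.)

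The ``only if'' direction ($|S|=4$) is where the proposal has a real gap, and you acknowledge it yourself (``the delicate step''). Two concrete problems. First, non-facethood in $P_n$ does not by itself give non-facethood in $\close{Q_n}$: $\close{Q_n}$ is full-dimensional, its face on this hyperplane is the Minkowski sum of the (low-dimensional) $P_n$-face with a cone of cut metrics, and the cut cone could in principle supply the missing dimensions. So one genuinely needs either a dimension count for that Minkowski sum or a bona fide decomposition into other valid inequalities. Second, the sketched decomposition does not work as stated: scaling the four $3$-element strengthened stars by $1/2$ reproduces the correct left-hand side $3\sum_{i\in S}d(r,i)-\sum_{\{i,j\}\subset S}d(i,j)$ but only right-hand side $4$, and a ``clique correction'' cannot help because the clique inequality has $+1$ coefficients on the $d(i,j)$ with $\{i,j\}\subset S$, i.e.\ the wrong sign, so no nonnegative multiple of it can be added without changing the left-hand side. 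You would need a different certificate (e.g.\ an explicit valid inequality whose equality face strictly contains the equality face of the $|S|=4$ strengthened star, or a direct computation that the $P_n$-face plus the cut-cone face has affine dimension at most $\binom{n}{2}-2$). As it stands, the ``only if'' half is a plan, not a proof.
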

We omit the proofs, for the sake of brevity.

%%% Local Variables: 
%%% mode: latex
%%% TeX-master: "Qn"
%%% End: 

\section{Unbounded Edges of $Q_n$ and $\close{Q_n}$} \label{se:edges}

\subsection{Unbounded edges of $Q_n$}
We now investigate \textsl{how} the polyhedral cones $M^1(\pi) =
D(\pi)+D(N_\pi)$ as subsets of $Q_n$.  In Fig.~\ref{fig:Q_3}, it can
be seen that in the case $n=3$, the three cones are faces of $Q_3$
(recall that $Q_3$ is a polyhedron, which means that we can safely
speak of faces).  In the following proposition, we show that this is
the case for all $n$, and we also characterize the extremal half-lines
of $Q_n$.  This will be useful in comparing $Q_n$ with its closure: We
will characterize the unbounded edges issuing from each vertex for the
polyhedron $\close{Q_n}=P_n+$CUT$_n$ in the following subsection.

We are dealing with an unbounded convex set of which we do not know whether it
is closed or not.  (In fact, we will show that $Q_n$ is 
almost never closed).  For this purpose, we supply the following fact for easy
reference. 

\begin{fact}\label{fact:detailsForQn6b}%
  For $k=1,\dots,m$ let $K_k$ be a (closed) polyhedral cone with apex $x_k$.
  Suppose that the $K_k$ are pairwise disjoint and define $S :=
  \biguplus_{k=1}^m 
  K_k$.  Let $x,y$ be vectors such that $x+\RR_+y$ is an extremal subset of
  $\conv(S)$.  It then follows that there exists a $\lambda_0\in\RR_+$ and a
  $k$ such 
  that $x+\lambda y\in K_k$ for all $\lambda\ge\lambda_0$.  Since $x+\RR_+y$
  is extremal, this implies that there exists a $\lambda_1\in\RR_+$ such that
  $x_k = x+\lambda_1 y$ and $x_k+\RR_+ y=\{x+\lambda y\mid
  \lambda\ge\lambda_1\}$ is an extreme ray of the polyhedral cone $K_k$. 
\end{fact}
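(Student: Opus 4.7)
The plan is to set $F := x + \RR_+ y$ and handle the two assertions of the fact in turn. For the first assertion, I would exploit that $F$ is an extreme face of $\conv(S)$: any convex combination $p = \sum_j \alpha_j s_j$ representing a point $p\in F$ with $s_j\in S$ forces each $s_j$ onto $F$, so every point of $F$ lies in $\conv(F\cap S) = \conv\bigl(\bigcup_k F\cap K_k\bigr)$. Each $F\cap K_k$ is a convex subset of the ray $F$, hence an interval, point, or half-line along $F$; disjointness of the $K_k$ prevents two such pieces from being simultaneously unbounded, while the unboundedness of $F$ forces at least one of them to reach infinity. Hence exactly one $F\cap K_k$ contains a tail of $F$, which yields the claimed $\lambda_0$ and $k$.

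For the second assertion, write $K_k = x_k + C_k$ for a pointed polyhedral cone $C_k$ (pointedness is implicit in the word ``apex''), and define $\lambda_1 := \min\{\lambda\ge 0 : x+\lambda y\in K_k\}$. The minimum is attained because $K_k$ is closed and a whole tail of the ray sits inside it, and by convexity of $K_k$ one has $F\cap K_k = \{x+\lambda y\mid\lambda\ge\lambda_1\}$. Observe next that $y\in C_k$: for $\lambda\ge\lambda_1$ the vector $(x+\lambda y - x_k)/\lambda$ lies in $C_k$ and tends to $y$ as $\lambda\to\infty$, so closedness of $C_k$ gives the inclusion.

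The crucial step is to verify $x+\lambda_1 y = x_k$. Set $p := x+\lambda_1 y = x_k + w$ with $w\in C_k$, and consider the segment in $K_k$ from $x_k$ to $x_k+2w$, whose midpoint is $p$. Since $p$ lies on the face $F$, both endpoints must lie on $F$ as well, and in particular $x_k = x+\mu y$ for some $\mu\in\RR_+$. Then $w = (\lambda_1-\mu)y$; minimality of $\lambda_1$ gives $\mu\ge\lambda_1$, so $w$ is a non-positive multiple of $y$, and if $w\ne 0$ then $-y\in C_k$ together with $y\in C_k$ would violate pointedness, forcing $w=0$. Finally, $x_k+\RR_+ y = F\cap K_k$ is the intersection of a face of $\conv(S)$ with the subset $K_k$ of $\conv(S)$, hence a one-dimensional face of $K_k$ through its apex, i.e.\ an extreme ray. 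The main obstacle I anticipate is precisely this segment-midpoint step, which marries the face property of $F$ to the pointedness of $C_k$; the remaining steps are routine convex-geometric manipulations.
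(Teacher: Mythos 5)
Your proof is correct. The paper states this as a \emph{Fact} without proof (it is offered ``for easy reference''), so there is no proof in the paper to compare against; your argument supplies the missing justification cleanly. The two-stage structure is the natural one: (i) use the face property to get $F\subseteq\conv\bigl(\bigcup_k F\cap K_k\bigr)$, note that each $F\cap K_k$ is a subinterval of the ray, and invoke finiteness of $m$ (the convex hull of finitely many bounded sets is bounded) to force a tail of $F$ into some $K_k$; (ii) translate to the cone $C_k$ at the apex, observe $y\in C_k$ by closedness, and use the midpoint/face trick together with pointedness of $C_k$ to pin $x+\lambda_1 y$ down to the apex, after which $F\cap K_k$ is a one-dimensional face of $K_k$ through its apex, i.e.\ an extreme ray. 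Two small remarks. First, you are right that pointedness must be read into the word ``apex''; in the paper's application ($K_k=M^1(\pi)=D(\pi)+D(N_\pi)$, with $D(N_\pi)$ generated by $n-1$ linearly independent cut metrics) this is indeed satisfied. One could alternatively derive pointedness of $C_k$ in the direction $y$ from extremality of $F$ alone, without appealing to the apex terminology: if $\pm y\in C_k$ then $K_k$ contains the full line through $x$ in direction $y$, and the face property applied to the endpoint $x=\tfrac12(x-y)+\tfrac12(x+y)$ forces $x-y\in F$, a contradiction. Second, for assertion (i) you only need that \emph{at least one} $F\cap K_k$ is unbounded; the disjointness-based uniqueness you mention is a bonus but not required.
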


\begin{definition}
We say that a permutation $\pi$ and a non-empty set $U\subsetneq[n]$ are {\it
  incident,} if $U = \{\pi^{-1}(1),\dots,\pi^{-1}(k)\}$, where $k:=\abs
  U$. 
%\footnote{Incidence of permutations and subsets of $[n]$ reflects
%incidence of vertices and facets of the permutahedron $\Pmh$ and, of course, of
%facets and vertices of the polar of the permutahedron, 
%  $(\Pmh)^\polar := \{a\in\Lsp\mid a^\Tp x\le 1\;\;$ for all $x\in\Pmh \}$.}
\end{definition}

\begin{proposition}\label{cor:x-hlines-in-Qn}\mbox{}%
  \renewcommand{\theenumi}{\roman{enumi}}
  \begin{enumerate}
  \item\label{cor:x-hlines-in-Qn:a} For every $\pi \in S(n)$, each
    edge of the cone $D(\pi)+D(N_\pi)$ is an exposed subset of $Q_n$.
  \item\label{cor:x-hlines-in-Qn:b} The unbounded one dimensional
    extremal sets of $Q_n$ are exactly the defining half-lines.  In
    other words, every half-line $X+\RR_+Y$ which is an extremal
    subset of $Q_n$ is of the form $D(\pi)+\RR_+D(\chi^U)$ for a
    $\pi\in S(n)$ and a set $U$ incident to $\pi$.
    In particular, for every vertex $D(\pi)$ of $Q_n$, the unbounded
    one-dimensional extremal subsets of $Q_n$ containing $D(\pi)$ are
    in bijection with the non-empty proper subsets of $[n]$ incident
    to $\pi$.  Thus there are precisely $n-1$ of them.
\end{enumerate}
\end{proposition}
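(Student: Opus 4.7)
The strategy is to exploit the polyhedral structure of each $M^1(\pi)$ and of $\close{Q_n}$, together with Fact \ref{fact:detailsForQn6b}. For (i), Lemmas \ref{le:sum1} and \ref{le:sum2} describe $M^1(\pi)=D(\pi)+D(N_\pi)$ as a pointed polyhedral cone whose $n-1$ extreme rays are generated by the cut metrics $D(\chi^{\pi^{-1}([k])})$; its edges are therefore the half-lines $e_k:=D(\pi)+\RR_+D(\chi^{\pi^{-1}([k])})$ for $k=1,\dots,n-1$. By Proposition \ref{prop:n-1Dimensional}, $M^1(\pi)$ is an $(n-1)$-dimensional face of the polyhedron $\close{Q_n}$, so each $e_k$, being a face of this face, is itself a face of $\close{Q_n}$ and hence exposed by some hyperplane $\alpha\bullet d=\beta$. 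Since $e_k\subseteq M_n^{R1}\subseteq Q_n\subseteq\close{Q_n}$, the inequality $\alpha\bullet d\ge\beta$ is also valid on $Q_n$, and any $d\in Q_n$ meeting it with equality lies in the equality set within $\close{Q_n}$, namely $e_k$; so $e_k$ is exposed in $Q_n$.

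For (ii), I would pick a set $\Pi\subset S(n)$ containing one representative from each antipodal pair, so that $M_n^{R1}=\biguplus_{\pi\in\Pi}M^1(\pi)$ is the disjoint union of Proposition \ref{pr:union2}. Applying Fact \ref{fact:detailsForQn6b} (with $K_\pi=M^1(\pi)$, apex $D(\pi)$) to an extremal half-line $X+\RR_+Y$ of $Q_n=\conv(M_n^{R1})$ produces some $\pi\in\Pi$ and some $\lambda_1\ge 0$ with $D(\pi)=X+\lambda_1 Y$ and $D(\pi)+\RR_+Y$ an extreme ray of the cone $M^1(\pi)$. By the extreme-ray description from Lemmas \ref{le:sum1} and \ref{le:sum2}, $Y$ must be a positive multiple of $D(\chi^U)$, with $U=\pi^{-1}([k])$ incident to $\pi$.

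The main obstacle is to rule out $\lambda_1>0$, which would place the apex $D(\pi)$ strictly inside the extremal half-line rather than at its endpoint. The plan is to first establish that $D(\pi)$ is an extreme point of $Q_n$: by Proposition \ref{pr:boundedfacet}, the valid inequality $\sum_{\{i,j\}}d(i,j)\ge\binom{n+1}{3}$ is tight precisely on $P_n$, so any decomposition $D(\pi)=\tfrac12 z_1+\tfrac12 z_2$ in $Q_n$ forces $z_1,z_2\in P_n$, and vertex-ness of $D(\pi)$ in $P_n$ (from \cite{AL09}) then yields $z_1=z_2=D(\pi)$. Given this, if $\lambda_1>0$ the distinct points $D(\pi)\pm\eps Y$ would both lie on the half-line (and thus in $Q_n$) for $0<\eps<\lambda_1$ and average to $D(\pi)$, contradicting extremality. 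Hence $X=D(\pi)$ and $X+\RR_+Y=D(\pi)+\RR_+D(\chi^U)$ as a set; combined with (i), these are exactly the unbounded one-dimensional extremal subsets of $Q_n$. The bijection is then immediate: for fixed $\pi$, the incident subsets $U=\pi^{-1}([k])$ are indexed by $k\in[n-1]$, giving precisely $n-1$ such half-lines through the vertex $D(\pi)$.
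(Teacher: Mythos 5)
Your proof is correct, and the two parts split naturally in how they compare to the paper.

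For part (i), your route is genuinely different from the paper's. The paper constructs an explicit certificate matrix $C$ (and then a modification $C'$) that exposes $D(\id)+D(N_\id)$ and its edges directly; you instead invoke Proposition~\ref{prop:n-1Dimensional} to get $M^1(\pi)$ as a face of the polyhedron $\close{Q_n}$, use the fact that a face of a face is a face and that faces of polyhedra are exposed, and then transfer the exposing inequality from $\close{Q_n}$ down to $Q_n$ using $e_k\subseteq Q_n\subseteq\close{Q_n}$. This is cleaner and more abstract; what you lose is the explicit certificate, which the paper seems to want on record (it reuses the same construction idea in Lemma~\ref{lem:ShownAllreadyForClosureOfQn}). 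One small point worth making explicit in your write-up: $\close{Q_n}=P_n+\text{CUT}_n$ is a polyhedron, which is exactly why its faces are exposed.

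For part (ii), your argument is essentially the paper's (both hinge on Fact~\ref{fact:detailsForQn6b}), but you supply the details the paper leaves implicit. The paper says the converse ``follows from Fact~\ref{fact:detailsForQn6b} and the fact that the extreme points of $Q_n$ are precisely the vertices of $P_n$''; you spell out why the apex must coincide with the endpoint of the half-line (the $\lambda_1=0$ step), and you give a self-contained proof that each $D(\pi)$ is extreme in $Q_n$ via Proposition~\ref{pr:boundedfacet}. Both of these fill real gaps in the paper's one-line justification, and both are correct. (An alternative, slightly slicker way to kill $\lambda_1>0$: $X$ is automatically an extreme point of $Q_n$ for any extremal half-line $X+\RR_+Y$, hence $X\in P_n$; since $D(\pi)=X+\lambda_1 Y$ with $D(\pi)\in P_n$ as well and $\sum_{ij}Y_{ij}>0$ for any cut-metric direction, the affine hull equation of $P_n$ forces $\lambda_1=0$. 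But your midpoint argument works just as well.)
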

%Before we prove the proposition, let us make a remark about symmetry in $Q_n$.
%\begin{remark} \label{rem:symmetry}
%  For a permutation $\pi$, recall the definition of its permutation matrix
%  $E_\pi$ which is an $n\times n$-matrix which has, for every $l$, a unique
%  non-zero entry in the $l$th column, namely a one in the $\pi(l)$th row.  It
%  is clear that $D\mapsto E_\pi^\Tp D E_\pi$ is a linear isomorphism on
%  $\SzM{n}$ which maps $P_n$ onto $P_n$.  If $\sigma\in
%  S(n)$ and $x\in\RR^n$, letting $(x\circ\sigma)_j := x_{\sigma(j)}$ for all
%  $j\in[n]$, we have $E_\sigma^\Tp D(x) E_\sigma = D(x\circ\sigma)$.
%\end{remark}
\begin{proof}%[Proof of Prop.~\ref{cor:x-hlines-in-Qn}]
  \textit{\ref{cor:x-hlines-in-Qn:a}.} By symmetry it is sufficient to
  treat the case $\pi=\id:=(1,\dots,n)^\Tp$, the identity permutation.
  Consider the matrix
  \begin{equation*}
    C := 
    \Mtxv{
      0 &1  & &       & &   &-1\\
      1 &0  &1&       & &\N0&  \\
      ~ &1  & &       & &   &  \\
      ~ &   & &\ddots & &   &  \\
      ~ &   & &       & &1  &\\
      ~ &\N0& &       &1&0  &1\\
      -1&   & &       & &1  &0
    }
    \in \SzM{n}.
  \end{equation*}
  It is easy to see that the minimum over all $C\bullet D(\pi)$, $\pi\in
  S(n)$, is attained only in 
  $\pi=\id,\id^-$ with the value $0$.  Moreover, for any non-empty proper
  subset $U$ of $[n]$, we have 
  $C\bullet D(\chi^U) = 0$ if $U$ is incident to $\id$ and $C\bullet D(\chi^U)
  > 0$ otherwise.  Hence, we have that $D(\id) + D(N_\id)$ is equal to the set
  of all points in $Q_n$ which satisfy the valid inequality $C\bullet X\ge 0$ with equality.
  Out of this matrix $C$ we will now construct a matrix $C'$ and a
  right hand side such that only some of the subsets incident to $\id$
  fulfill the inequality with equality.  To do so let $U_0$ be a
  subsets of $[n]$ incident to $\id$.  If, for each $U \subset [n]$
  incident to $\id$ but different from $U_0$, we increase the matrix
  entries $C_{\max U,\max U +1}$ and $C_{\max U+1,\max U}$ by one, we
  obtain an inequality $C'\bullet X\ge 0$ which is valid for $Q_n$ and
  such that the set of all points of $Q_n$ which are satisfied with
  equality is precisely the edge of $D(\id)+D(N_\id)$ generated by the
  half-lines $D(\id) + \RR_+D(\chi^{U_0})$.
  
  {\it \ref{cor:x-hlines-in-Qn:b}.} That the defining half lines are extremal
  has just been proved in $\ref{cor:x-hlines-in-Qn:a}$. 
  The converse statement follows from Fact~\ref{fact:detailsForQn6b} and
  the fact that the extreme points of $Q_n$ are precisely the vertices of
  $P_n$, which are of the form $D(\pi)$, for $\pi\in S(n)$.
\end{proof}

%%%%%%%%%%%%%%%%%%%%%%%%%%%%%
%%%%%%%%%%%%%%%%%%%%%%%%%%%%%
\subsection{Unbounded edges in $\close{Q_n}$}\label{sec:unbdEdges}

We have just identified some unbounded edges of $\close{Q_n}=P_n+$CUT$_n$
starting at a particular vertex $D(\pi)$ of this polyhedron.
We now set off to characterize {\sl all} unbounded edges of $\close{Q_n}$.
Clearly, the unbounded edges are of the form $D(\pi)+\RR_+ D(\chi^U)$, but
not all these half-lines are edges. 
For a permutation $\pi$ and a non-empty subset $U\subsetneq[n]$, we say that
$D(\pi)+\RR_+ D(\chi^U)$ is the half-line {\it defined by the pair $\pair{\pi}{U}$}.
In this section, we characterize the pairs $\pair{\pi}{U}$ which have
the property that the half-lines they define are edges.  For this, we
make the following definition.

\begin{definition}
  Let $\pi$ be a permutation, and let $U$ be a subset of $[n]$.  We
  say that $U$ is %\textit{over the ridge from $\pi$}
  \textit{almost incident} to $\pi$, if there exists
  a $k\in[n-1]$ such that $U=\pi^{-1}([k-1]\cup\{k+1\})$.
%\footnote{%
%    The terminology is taken from the geometry of the permutahedron,
%    see e.$\;$g.~\cite{Z98}.  In the polar polytope of the permutahedron,
%    the facets correspond to permutations and the vertices to subsets
%    of $[n]$.  Since the polar is simplicial, if we start somewhere
%    ``on $\pi$'' and ``walk over'' a particular ridge to a neighboring
%    facet $\pi'$, then a unique vertex ``comes into sight.''  The sets
%    corresponding to these vertices are precisely those which are
%    ``over the ridge'' from $\pi$.%
%  }
\end{definition}

We can now state our theorem.

\begin{theorem}\label{thm:conjectureAboutExraysProven}
  For all $n\ge 3$, the unbounded edges of $\close{Q_n}$ are precisely
  the half-lines defined by those pairs $\pair{\pi}{U}$, for which
  neither $U$ nor $\cplmt U$ is almost incident to $\pi$.
\end{theorem}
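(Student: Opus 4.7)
The plan is to prove the two directions of the equivalence separately.

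For the ($\Leftarrow$) direction, suppose that one of $U$ or $\cplmt U$ is almost incident to $\pi$; by $D(\chi^U)=D(\chi^{\cplmt U})$ we may WLOG take $U=\pi^{-1}([k-1]\cup\{k+1\})$ for some $k\in[n-1]$. Let $\pi'$ be the permutation obtained from $\pi$ by exchanging the values $k$ and $k+1$ (so $U$ is incident to $\pi'$), and set $V:=\pi^{-1}([k])$, which is incident to $\pi$ and different from both $U$ and $\cplmt U$. A direct entry-by-entry computation, in the spirit of Lemma~\ref{le:sum2}, establishes the key identity
\[
D(\pi)+D(\chi^U)=D(\pi')+D(\chi^V).
\]
Now suppose for contradiction that $D(\pi)+\RR_+D(\chi^U)$ is an edge of $\close{Q_n}$, exposed by some $c\in\SzM{n}$. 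Because $\close{Q_n}=P_n+{\rm CUT}_n$ and the minimizing face of $c$ factors as a sum of the minimizing faces on each summand, this forces $c\bullet D(\pi)=\min_{\close{Q_n}}c\bullet x$, $c\bullet D(\chi^U)=0$, and $c\bullet D(\chi^V)>0$ (since $V\ne U,\cplmt U$ means the extreme ray $\RR_+D(\chi^V)$ of ${\rm CUT}_n$ lies outside the minimizing face of $c$ on ${\rm CUT}_n$). Applying $c$ to the identity yields $c\bullet D(\pi')=c\bullet D(\pi)-c\bullet D(\chi^V)<c\bullet D(\pi)$, contradicting minimality since $D(\pi')\in\close{Q_n}$.

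For the ($\Rightarrow$) direction, assume neither $U$ nor $\cplmt U$ is almost incident to $\pi$; WLOG $\pi=\id$. It suffices to construct $c\in\SzM{n}$ satisfying (i) $c\bullet D(\chi^U)=0$, (ii) $c\bullet D(\chi^W)>0$ for every non-empty proper $W\ne U,\cplmt U$, and (iii) $c\bullet D(\id)<c\bullet D(\sigma)$ for every $\sigma\ne\id,\id^-$, because the decomposition $\close{Q_n}=P_n+{\rm CUT}_n$ then forces the minimizing face of $c$ to equal $D(\id)+\RR_+D(\chi^U)$. By LP duality (Farkas's lemma for systems of strict and equality linear constraints), such a $c$ exists if and only if there is no non-trivial non-negative combination
\[
\nu D(\chi^U)+\sum_W \lambda_W D(\chi^W)+\sum_\sigma\mu_\sigma\bigl(D(\sigma)-D(\id)\bigr)=0
\]
with $\nu\in\RR$, $\lambda_W\ge 0$ for non-empty proper $W\ne U,\cplmt U$, and $\mu_\sigma\ge 0$ for $\sigma\ne\id,\id^-$.

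The hardest step is ruling out such a Farkas certificate under the non-almost-incidence hypothesis. The relations from the ($\Leftarrow$) direction, specialized to $\pi=\id$, give for each $k\in[n-1]$ an elementary identity
\[
D(\chi^{[k-1]\cup\{k+1\}})=D(\chi^{[k]})+\bigl(D(\pi_k)-D(\id)\bigr),\qquad\pi_k:=\id\circ(k,k+1),
\]
and a structural analysis shows these are essentially the only extreme rays of the Farkas cone that produce a non-zero coefficient on a cut metric. Using the decomposition $D(\sigma)=\sum_k D(\chi^{\sigma^{-1}([k])})$ to unfold contributions from arbitrary $\sigma$, together with a case analysis of the cycle-type of permutations appearing with $\mu_\sigma>0$, any Farkas combination can be rewritten as a non-negative combination of these $n-1$ elementary relations plus pure-cut linear dependencies, which carry $\nu=0$. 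Under our hypothesis $U$ coincides with none of the almost-incident sets $[k-1]\cup\{k+1\}$ nor their complements, so no such combination can produce $\nu\ne 0$, and the Farkas certificate does not exist. Hence the required $c$ can be constructed and the half-line is an edge of $\close{Q_n}$.
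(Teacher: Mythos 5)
Your split into two directions is the right framing, and the $(\Leftarrow)$ direction is essentially the paper's Lemma~\ref{lem:theyAreNoExtremeRays}: the identity $D(\pi)+D(\chi^U)=D(\pi')+D(\chi^V)$ is exactly what the paper records (for $\pi=\id$) as $D(\chi^U)=D(\chi^{[k]})+\bigl(D(\pi')-D(\id)\bigr)$, and it does show the half-line cannot be a one-dimensional face. (A small simplification: you do not need the exposing vector $c$ at all; $D(\id)+D(\chi^U)$ is the midpoint of $D(\pi')$ and $D(\pi')+2D(\chi^{[k]})$, two points of $\close{Q_n}$ that do not lie on the candidate half-line, so the half-line is not extremal.)

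The $(\Rightarrow)$ direction is where the whole difficulty of the theorem lives, and there the proposal does not actually supply an argument. You reduce, correctly and exactly as the paper's Lemma~\ref{lem:Farkas-truc}, to showing that no Farkas certificate
$\nu D(\chi^U)+\sum_W\lambda_W D(\chi^W)+\sum_\sigma\mu_\sigma\bigl(D(\sigma)-D(\id)\bigr)=0$
exists. But the decisive step — that ``a structural analysis shows these are essentially the only extreme rays of the Farkas cone that produce a non-zero coefficient on a cut metric,'' and that ``any Farkas combination can be rewritten as a non-negative combination of these $n-1$ elementary relations plus pure-cut linear dependencies'' — is asserted, not proved, and it is far from obvious. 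The rays $D(\chi^W)$ together with the edge vectors $D(\sigma)-D(\id)$ satisfy a large and tangled set of affine relations, and there is no evident reason these collapse onto the $n-1$ transposition identities. (Note also that complements of almost-incident sets have to be excluded, and your structural claim never mentions complements.) In effect the proposal restates the theorem in Farkas-dual language without solving it.

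The paper's proof of the $(\Rightarrow)$ direction is genuinely different and constructive: instead of ruling out a dual certificate, it exhibits a primal certificate matrix $C$ satisfying~\eqref{eq:edge-cond-Farkas} directly. It does this with (i) explicit hand-built matrices for base cases $n\in\{4,5,6\}$ (Table~\ref{table:appendix}); (ii) the lifting Lemma~\ref{lem:reduct}, which turns a certificate for $\close{Q_n}$ into one for $\close{Q_{n+k}}$ by gluing in a heavily weighted path matrix $B_\omega$ that ``contracts'' a run of consecutive $1$s (or, via complementation, $0$s); and (iii) the inductive Lemma~\ref{lem:induction} for alternating sets. The proof of the theorem is then a case analysis on the number of slopes of $U$, with Tables~\ref{table:2slopes} and~\ref{table:3slopes} recording which base case each slope pattern reduces to. To make your proposal work you would have to replace all of this machinery by an actual proof of your Farkas-cone decomposition, and nothing in the sketch indicates that this would be easier than the constructive route.
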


From Theorem~\ref{thm:conjectureAboutExraysProven}, we have the
following consequences.

\begin{corollary}
  For $n\ge 4$, the number of unbounded edges issuing from a vertex of
  $\close{Q_n}=P_n+C_n$ is $2^{n-1}-n$.
\end{corollary}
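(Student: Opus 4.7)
The plan is to fix a vertex $D(\pi)$ of $\close{Q_n}=P_n+\mathrm{CUT}_n$ and recast the problem as a counting question on subsets of $[n]$ via Theorem~\ref{thm:conjectureAboutExraysProven}. Since a cut metric depends only on its bipartition, $D(\chi^U)=D(\chi^{\cplmt U})$, so the pairs $\pair{\pi}{U}$ and $\pair{\pi}{\cplmt U}$ define the same half-line; hence by the theorem the unbounded edges issuing from $D(\pi)$ are in bijection with the unordered pairs $\{U,\cplmt U\}$ of non-empty proper subsets of $[n]$ such that neither $U$ nor $\cplmt U$ is almost incident to $\pi$.

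Dropping the almost-incidence restriction, the number of such unordered pairs is $\tfrac12(2^n-2)=2^{n-1}-1$. From the definition, exactly $n-1$ subsets are almost incident to $\pi$, namely $\pi^{-1}([k-1]\cup\{k+1\})$ for $k\in[n-1]$. If the complement of an almost incident set is never itself almost incident, then the $n-1$ almost incident sets together with their complements form exactly $n-1$ excluded unordered pairs $\{U,\cplmt U\}$, and subtraction yields $(2^{n-1}-1)-(n-1)=2^{n-1}-n$, as required.

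The main (and only nontrivial) step is therefore to verify this ``no self-complementary pair'' claim for $n\ge 4$. I would assume $\pi^{-1}([k-1]\cup\{k+1\})$ and $\pi^{-1}([j-1]\cup\{j+1\})$ are complementary in $[n]$; since $\pi^{-1}$ is a bijection, this reduces to showing that $[k-1]\cup\{k+1\}$ and $[j-1]\cup\{j+1\}$ cannot be complementary in $[n]$ unless $n=3$. But both sets contain $1$ whenever $k,j\ge 2$, so disjointness forces, say, $k=1$; then the first set is $\{2\}$, and its complement $[n]\setminus\{2\}$ matches $[j-1]\cup\{j+1\}$ only when $j=2$ and $n=3$, which is excluded by hypothesis. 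Hence for $n\ge 4$ the claim holds, completing the count.
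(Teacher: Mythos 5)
Your argument is correct, and since the paper omits any proof of this corollary (stating it as an immediate consequence of Theorem~\ref{thm:conjectureAboutExraysProven}), your reconstruction is exactly the natural one the authors had in mind: reduce the count to unordered pairs $\{U,\cplmt U\}$ of non-empty proper subsets (of which there are $2^{n-1}-1$), observe that the $n-1$ almost-incident sets have pairwise distinct cardinalities $1,\dots,n-1$ (so are distinct), and then verify that for $n\ge 4$ no two of them are complementary, which you do cleanly by a size/containment case check; subtraction gives $2^{n-1}-n$.
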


\begin{corollary}
  For $n\ge 4$, the extremal half-lines containing an extreme point of
  $Q_n$ are a proper subset of the unbounded edges issuing from the
  same vertex of $\close{Q_n}$.
\end{corollary}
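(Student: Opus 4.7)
The plan is to combine Proposition~\ref{cor:x-hlines-in-Qn}(ii), Theorem~\ref{thm:conjectureAboutExraysProven}, and the preceding corollary. Two claims are needed: (a) every extremal half-line of $Q_n$ through an extreme point $D(\pi)$ is also an unbounded edge of $\close{Q_n}$ through the same vertex; and (b) for $n\ge 4$ there are strictly more unbounded edges of $\close{Q_n}$ at $D(\pi)$ than extremal half-lines of $Q_n$ at $D(\pi)$.

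For (a), Proposition~\ref{cor:x-hlines-in-Qn}(ii) says the extremal half-lines of $Q_n$ through $D(\pi)$ are exactly the half-lines $D(\pi)+\RR_+ D(\chi^U)$ with $U=\pi^{-1}([k])$ for some $k\in[n-1]$, i.e.\ with $U$ incident to $\pi$. By Theorem~\ref{thm:conjectureAboutExraysProven}, such a half-line is an unbounded edge of $\close{Q_n}$ provided neither $U$ nor $\cplmt U$ is almost incident to $\pi$. Since $\pi^{-1}$ is a bijection, this reduces to verifying that neither $[k]$ nor $\{k+1,\dots,n\}$ can be written as $[k'-1]\cup\{k'+1\}$ for any $k'\in[n-1]$. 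The set $[k]$ is an unbroken initial segment and hence never has that shape, while the set $\{k+1,\dots,n\}$ does not contain $1$, whereas $[k'-1]\cup\{k'+1\}$ contains $1$ for $k'\ge 2$ and reduces to the singleton $\{2\}$ when $k'=1$ (matching $\{k+1,\dots,n\}$ only in the trivial case $n=2$). This establishes~(a).

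For (b), I would use that the extreme points of both $Q_n$ and $\close{Q_n}$ coincide with the vertices $D(\pi)$ of $P_n$: for $Q_n$ this is recorded in the proof of Proposition~\ref{cor:x-hlines-in-Qn}(ii), and for $\close{Q_n}=P_n+\text{CUT}_n$ it follows from the argument in Proposition~\ref{pr:boundedfacet}. By Proposition~\ref{cor:x-hlines-in-Qn}(ii), each such vertex carries exactly $n-1$ extremal half-lines of $Q_n$; by the preceding corollary, the same vertex of $\close{Q_n}$ carries $2^{n-1}-n$ unbounded edges. The elementary inequality $2^{n-1}-n>n-1$ is equivalent to $2^{n-1}>2n-1$, which already holds at $n=4$ (giving $8>7$) and hence for all $n\ge 4$. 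Combined with (a), this forces the inclusion to be strict.

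No step poses a genuine obstacle: the whole proof is a transparent reduction to the two immediately preceding results plus one elementary counting inequality. The only care required is in the finite check that an incident set and its complement are never almost incident, which is handled by the short combinatorial comparison in the paragraph above.
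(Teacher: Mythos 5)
Your proof is correct and lands in essentially the same place as the paper's, which is a one-line counting argument: $n-1 < 2^{n-1}-n$ for $n\ge 4$. You do, however, spell out explicitly the containment part (a) that the paper leaves implicit: the paper relies on Lemma~\ref{lem:ShownAllreadyForClosureOfQn} (and the ``one slope'' case in the proof of Theorem~\ref{thm:conjectureAboutExraysProven}), which already establishes that every half-line $D(\pi)+\RR_+D(\chi^U)$ with $U$ incident to $\pi$ is a face of $\close{Q_n}$, whereas you instead verify directly that initial segments $[k]$ and their complements never satisfy the ``almost incident'' pattern $[k'-1]\cup\{k'+1\}$. Both routes are sound; your direct combinatorial check is self-contained and arguably more transparent, while the paper's terse proof presupposes the reader recalls the earlier lemma. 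Nothing in your argument is missing or wrong.
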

\begin{proof}
  We have $n-1 < 2^{n-1}-n$ if $n\ge 4$.
\end{proof}

\begin{corollary}
  The convex set $Q_n$ is closed if and only if $n \leq 3$.
\end{corollary}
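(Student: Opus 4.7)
The plan is to reduce the result to the two immediately preceding corollaries, together with the remark (already given in the text, just after Proposition \ref{pr:boundedfacet}) that $Q_n$ is closed whenever $n\le 3$. So only the $n\ge 4$ direction requires an argument, and that argument is essentially one line once the right observation is isolated.

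For the forward direction, I would simply cite the text: for $n\le 3$, $Q_n$ is already stated to be a polyhedron, and in particular closed. (One could alternatively verify this directly: $Q_2$ is the half-line $\{d(1,2)\ge 1\}$, and $Q_3$ is the three-dimensional polyhedron shown in Figure~\ref{fig:Q_3}.)

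For the converse, fix $n\ge 4$ and suppose for contradiction that $Q_n$ is closed, so that $Q_n=\close{Q_n}$. Extremality is an intrinsic notion of a convex set, so the extreme points of $Q_n$ coincide with the extreme points of $\close{Q_n}$, and for any such extreme point $D(\pi)$ the one-dimensional extremal subsets (i.e.\ edges) of $Q_n$ containing $D(\pi)$ coincide with the edges of $\close{Q_n}$ containing $D(\pi)$. In particular, every unbounded edge of $\close{Q_n}$ issuing from $D(\pi)$ would be an extremal half-line of $Q_n$ containing $D(\pi)$. This contradicts the preceding corollary, which asserts that for $n\ge 4$ the extremal half-lines of $Q_n$ through $D(\pi)$ form a \emph{proper} subset of the unbounded edges of $\close{Q_n}$ through $D(\pi)$ (the counts are $n-1$ versus $2^{n-1}-n$). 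Hence $Q_n\ne\close{Q_n}$, so $Q_n$ is not closed.

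The only step that is not completely formal is the equality of extremal structures on the two sides of the assumed identity $Q_n=\close{Q_n}$; but since both notions---extreme point and extremal half-line---depend only on the convex set (not on any presentation of it), this is automatic once the sets agree. Everything else is bookkeeping on the combinatorial counts already supplied by Theorem~\ref{thm:conjectureAboutExraysProven} and Proposition~\ref{cor:x-hlines-in-Qn}, so there is no genuine obstacle; the real work has all been done in the two preceding subsections.
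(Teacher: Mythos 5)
Your argument is correct and is exactly the (unwritten) argument the paper intends: the paper states this corollary immediately after the counting corollary $n-1 < 2^{n-1}-n$ without further proof, relying on the reader to observe, as you do, that if $Q_n=\close{Q_n}$ then the extremal half-lines through a vertex would have to agree on both sides, contradicting the strict inclusion. Your remark that extremality is intrinsic to the convex set, not to its presentation, is precisely the point needed to make the implication rigorous, and your handling of the $n\le 3$ direction (citing that $Q_2,Q_3$ are polyhedra) matches the text.
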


Major parts of the proof of the above stated theorem work in an inductive fashion by
reducing to the case when $n\in\{3,4,5,6\}$.  We will present the cases $n=3$
and $n=4$ as examples, which also helps motivating the definitions we require for the proof.

We will switch to a more ``visual'' notation of the subsets of $[n]$ by
identifying a set $U$ with a ``word'' of length $n$ over $\{0,1\}$ having a
$1$ in the $j$th position iff $j\in U$ --- it is just the row-vector $(\chi^U)^\Tp$.

\begin{example}[Unbounded edges of $\close{Q_3}$]\label{exp:unbdEdges:n-equal-3}
  We deal with the case $n=3$ ``visually'' by regarding Fig.~\ref{fig:Q_3}.
  There are two edges starting 
  at each vertex.  In fact, with some computation, it can be seen that the
  unbounded edges containing $D(\id)$ are
  \begin{align*}
    M\Mtxv{1\\2\\3}+\RR_+ M\Mtxv{1\\0\\0}
    &=
    \Mtxv{0&1&2\\1&0&1\\2&1&0} + \RR_+\Mtxv{0&1&1\\1&0&0\\1&0&0}
    \text{, \quad and}
    \\
    M\Mtxv{1\\2\\3}+\RR_+ M\Mtxv{1\\1\\0}
    &= \Mtxv{0&1&2\\1&0&1\\2&1&0} + \RR_+\Mtxv{0&0&1\\0&0&1\\1&1&0}
    \text{; \quad while}
    \\
    M\Mtxv{1\\2\\3}+\RR_+ M\Mtxv{1\\0\\1}
    &=
    \Mtxv{0&1&2\\1&0&1\\2&1&0} + \RR_+\Mtxv{0&1&0\\1&0&1\\0&1&0}
  \end{align*}
  is not an edge.  This agrees with Proposition~\ref{cor:x-hlines-in-Qn},
  because the sets $100$ and 
  $110$ are incident to $\id$, while $101$ and $010$ are not.
  Moreover, the set $101$ is almost incident to $\id$ and $010$ is its complement.  Thus,
  Theorem~\ref{thm:conjectureAboutExraysProven} is true for the special case
  when $\pi = \id$.  For the 
  other permutations, the easiest thing to do is to use symmetry.  We describe
  this in the next remark. 
\end{example}

\begin{remark}\label{rem:symm-Qn-and-inciovrdg}
 \renewcommand{\theenumi}{\roman{enumi}}
  For every $\sigma,\pi\in S(n)$ and $U\subset[n]$ we have the following.
  \begin{enumerate}
  \item Due to symmetry the pair $\pair{\pi}{U}$ defines an edge
    of $\close{Q_n}$ if and 
    only if the pair $\pair{\pi\circ \sigma}{\sigma^{-1}(U)}$ defines an edge
    of $\close{Q_n}$. 
  \item $U$ is incident to $\pi$ if and only if $\sigma^{-1}(U)$ is incident
  to $\pi\circ\sigma$. 
  \item $U$ is almost incident to a permutation $\pi$ if and only if
    $\sigma^{-1}(U)$ is almost incident $\pi\circ\sigma$.
  \item $\cplmt U$ is almost incident to a permutation $\pi$ if and only if
    $U$ is almost incident to $\pi^-$.
  \end{enumerate}
\end{remark}
\begin{proof} Can be checked using the definitions of $\pair{\pi}{U}$ and
    $U$ beeing incident respectively almost incident of $\pi$.   
\end{proof}

We now give the first general result as a step towards the proof of
Theorem~\ref{thm:conjectureAboutExraysProven}.

\begin{lemma}\label{lem:theyAreNoExtremeRays}
  If $\pi\in S(n)$ and $U\subset[n]$ is almost incident $\pi$, then the half-line
  $D(\pi)+\RR_+D(\chi^U)$ defined by the pair $\pair{\pi}{U}$ is not an edge
  of $\close{Q_n}$. 
\end{lemma}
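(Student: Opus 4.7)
The plan is to derive a contradiction from assuming that the half-line $D(\pi)+\RR_+d_U$ is an edge of $\close{Q_n}$, by exhibiting a second vertex of $\close{Q_n}$ that must lie on this (would-be) face yet cannot lie on the half-line itself. By the symmetry statement of Remark~\ref{rem:symm-Qn-and-inciovrdg}, I may assume $\pi=\id$, so that $U=[k-1]\cup\{k+1\}$ for some $k\in[n-1]$. Introduce the adjacent transposition $\sigma\in S(n)$ obtained from $\id$ by swapping the values at positions $k$ and $k+1$ (that is, $\sigma(k)=k+1$, $\sigma(k+1)=k$, and $\sigma(i)=i$ elsewhere), and set $V:=[k]$, which is incident to $\id$.

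The crux of the proof is the identity
\[
  D(\id) + d_U \;=\; D(\sigma) + d_V.
\]
I would verify this by a direct entry-by-entry comparison: $D(\sigma)-D(\id)$ and $d_V-d_U$ are both supported only on pairs $(i,j)$ with $\{i,j\}\cap\{k,k+1\}\ne\emptyset$, so only those entries need to be inspected. This bookkeeping is routine but is the one genuine calculation in the argument, and I expect it to be the main obstacle.

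Now suppose, toward a contradiction, that $D(\id)+\RR_+d_U$ is an edge of $\close{Q_n}$. Then there is a valid inequality $C\bullet X\ge\beta$ whose equality set on $\close{Q_n}$ is exactly this half-line. Using the Minkowski decomposition $\close{Q_n}=P_n+\text{CUT}_n$ from Section~\ref{sub:structure2}, validity forces $C\bullet D(\tau)\ge\beta$ for every $\tau\in S(n)$ and $C\bullet d_W\ge 0$ for every cut metric $d_W$. Applying $C$ to the identity above and using $C\bullet(D(\id)+d_U)=\beta$ yields $C\bullet D(\sigma)+C\bullet d_V=\beta$; combined with the two inequalities just noted, this forces $C\bullet D(\sigma)=\beta$ and $C\bullet d_V=0$, so in particular the vertex $D(\sigma)$ must lie on the half-line.

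Rearranging the identity, however, gives $D(\sigma)-D(\id)=d_U-d_V$, and $D(\sigma)\in D(\id)+\RR_+d_U$ would force $d_V$ to be a scalar multiple of $d_U$. For $n\ge 3$, the bipartitions $\{U,\cplmt U\}$ and $\{V,\cplmt V\}$ of $[n]$ are distinct (one checks directly that $U\ne V$ and $U\ne\cplmt V$), so $d_U$ and $d_V$ are linearly independent cut metrics. This contradicts the assumption that $D(\id)+\RR_+d_U$ is an edge and proves the lemma.
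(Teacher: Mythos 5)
Your proof is correct and rests on the same key identity $D(\id) + D(\chi^U) = D(\sigma) + D(\chi^{[k]})$ that the paper uses, where $\sigma$ is the adjacent transposition exchanging $k$ and $k+1$ (the paper writes it in the equivalent form $D(\chi^U) = D(\chi^{[k]}) + \bigl(D(\sigma) - D(\id)\bigr)$). The paper then concludes in one line that this exhibits $D(\chi^U)$ as a nontrivial conic combination of two non-parallel feasible directions at $D(\id)$, hence not an extreme ray direction there; your supporting-hyperplane argument spells out the same conclusion more explicitly but is the same idea.
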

\begin{proof}
  By the above remarks on symmetry, it is sufficient to prove the claim for
  the identical permutation $\id\in S(n)$.
  Consider a $k\in[n-1]$, and let $\pi' := \lt<k,k+1\rt>$ be the transposition
  exchanging $k$ and $k+1$, and let $U := [k-1]\cup\{k+1\}$.  Then a little
  computation shows that $D(\chi^U)$ can be written as a  
  conic combination of vectors defining rays issuing from $D(\id)$ as follows:
  \begin{equation*}
    D(\chi^U)  =  D(\chi^{[k]})  +  \bigl( D(\pi')  -  D(\id) \bigr).
  \end{equation*}
  Hence $D(\id)+\RR_+D(\chi^U)$ is not an edge.
\end{proof}

Note that by applying Remark~\ref{rem:symm-Qn-and-inciovrdg}, the
Lemma~\ref{lem:theyAreNoExtremeRays} 
implies that if $\cplmt U$ is almost incident $\pi$, then the pair
$\pair{\pi}{\cplmt U}$ does not 
define an edge of $\close{Q_n}$.

Before we proceed, we note the following easy consequence of Farkas' Lemma.

\begin{lemma}\label{lem:Farkas-truc}
  The following are equivalent:
  \begin{enumerate}\renewcommand{\theenumi}{\roman{enumi}}
  \item The half-line $D(\id)+\RR_+D(\chi^U)$ defined by the pair
  $\pair{\id}{U}$ is an edge of $\close{Q_n}$. 
  \item There exists a matrix $D$ satisfying the following constraints:
    \begin{subequations} \label{eq:edge-cond-plain}
      \begin{align}
        \label{eq:idIsOpt}
        D\bullet D(\pi) &> D\bullet D(\id) && \forall\; \pi \not= \id,\id^-,\\
        \label{eq:cutIsOpt}
        D\bullet D(\chi^{U'}) &> D\bullet D(\chi^U)=0 && \forall\; U' \not= U,\cplmt U.
      \end{align}
    \end{subequations}
  \item There exists a matrix $C$ satisfying
    \begin{subequations}  \label{eq:edge-cond-Farkas}
      \begin{alignat}{3}
        \label{eq:idIsOneOpt}
        &C\bullet D(\pi)  &&\ge C\bullet D(\id) &\qquad& \forall\; \pi \not= \id,\id^-,\\
        \label{eq:cutsArePositive}
        &C\bullet D(\chi^{U'}) &&\ge 0               &      & \forall\; U' \not= U,\cplmt U,\\
        \label{eq:cutIsNegative}
        &C\bullet D(\chi^U)    &&< 0. &&
      \end{alignat}
    \end{subequations}
  \end{enumerate}
\end{lemma}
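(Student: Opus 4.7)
The plan is to handle (i) $\Leftrightarrow$ (ii) structurally, via the Minkowski decomposition $\close{Q_n}=P_n+{}$CUT$_n$, and then to handle (ii) $\Leftrightarrow$ (iii) by a Farkas--Gordan duality combined with a trace identity. For (i) $\Rightarrow$ (ii) I would use that the edge, being a one-dimensional exposed face, is the minimiser set of some linear functional $D$ on $\close{Q_n}$. Writing a point of $\close{Q_n}$ in its Minkowski form $\sum_\pi\lambda_\pi D(\pi)+\sum_{U'}\mu_{U'} D(\chi^{U'})$ with $\lambda_\pi,\mu_{U'}\ge 0$ and $\sum\lambda_\pi=1$, the requirement that the minimiser set coincide with $D(\id)+\RR_+ D(\chi^U)$ forces the strict inequalities (\ref{eq:idIsOpt}) and (\ref{eq:cutIsOpt}) after accounting for the identifications $D(\id)=D(\id^-)$ and $D(\chi^U)=D(\chi^{\cplmt U})$. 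Conversely, a $D$ satisfying (ii) exposes the half-line: by the same decomposition, the strict inequalities confine any point in the exposed face to use only $D(\id)$ and the ray $D(\chi^U)$, so the exposed face equals $D(\id)+\RR_+ D(\chi^U)$, which is one-dimensional.

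For (ii) $\Leftrightarrow$ (iii) I would set $V:=\{D(\pi)-D(\id):\pi\in S(n)\setminus\{\id,\id^-\}\}\cup\{D(\chi^{U'}):\emptyset\neq U'\subsetneq[n],\ U'\neq U,\cplmt U\}$ and $a:=D(\chi^U)$. Farkas' lemma gives (iii) $\Leftrightarrow$ $a\notin\cone(V)$, and the Farkas/Motzkin transposition theorem, applied to the mixed strict/equality system of (ii), gives (ii) $\Leftrightarrow$ there is no $y=(y_v)\ge 0$ not all zero with $\sum_v y_v v=\mu a$ for some $\mu\in\RR$. The direction ``(iii) fails $\Rightarrow$ (ii) fails'' is immediate (take $\mu=1$). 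For the converse, suppose $\sum_v y_v v=\mu a$ with $y\ge 0$ not all zero; taking the inner product with $\Mone{n}$ and using $\Mone{n}\bullet D(\pi)=2\binom{n+1}{3}$ (permutation-independent, by the affine-hull result of \cite{AL09}) together with $\Mone{n}\bullet D(\chi^{U'})=2\abs{U'}\abs{\cplmt U'}$, the vertex-difference contributions cancel and one obtains $\sum_{U'}y_{U'}\abs{U'}\abs{\cplmt U'}=\mu\abs{U}\abs{\cplmt U}$. The case $\mu<0$ is impossible since the left-hand side is non-negative, and $\mu=0$ forces every $y_{U'}=0$, reducing the identity to $\sum_\pi y_\pi D(\pi)=(\sum_\pi y_\pi)D(\id)$, contradicting $D(\id)$ being an extreme point of $P_n$ distinct from each remaining $D(\pi)$. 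Hence $\mu>0$, so $a\in\cone(V)$, i.e., (iii) fails.

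The main obstacle is precisely this Farkas--Gordan step: (ii) requires $D\bullet a=0$ while (iii) requires $C\bullet a<0$, so no direct substitution passes between the two. The identity $\Mone{n}\bullet D(\pi)=2\binom{n+1}{3}$, which is essentially the defining equation of the affine hull of $P_n$, is exactly what makes the degenerate branches $\mu\le 0$ of the transposition dichotomy collapse and yields the clean equivalence with cone membership.
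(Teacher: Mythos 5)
The paper states this lemma without proof (calling it ``an easy consequence of Farkas' Lemma''), so there is no reference proof to compare against; your task is effectively to fill a gap the authors left implicit. Your argument is correct. The (i)~$\Leftrightarrow$~(ii) step via the Minkowski decomposition $\close{Q_n}=P_n+\mathrm{CUT}_n$ and the fact that exposing a Minkowski sum exposes the summand faces is exactly right, including the care about the identifications $D(\id)=D(\id^-)$ and $D(\chi^U)=D(\chi^{\cplmt U})$. For (ii)~$\Leftrightarrow$~(iii), the Motzkin transposition of (ii) is stated correctly, Farkas' lemma correctly rephrases (iii) as $D(\chi^U)\notin\cone(V)$, and the pairing with $\Mone{n}$ cleanly eliminates the permutation terms via the affine-hull equation of $P_n$, so the sign analysis of $\mu$ and the subsequent extremality contradiction for $\mu=0$ both go through. (For $\mu=0$ one also needs $D(\pi)\neq D(\id)$ for $\pi\neq\id,\id^-$, which holds since $|\pi(1)-\pi(n)|=n-1$ forces $\pi\in\{\id,\id^-\}$; you implicitly use this.)

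A somewhat shorter route for the second equivalence, which is probably what the authors had in mind, avoids the transposition theorem and the trace identity: (ii)~$\Rightarrow$~(iii) by a small perturbation $C:=D-\eps D(\chi^U)$, which makes $C\bullet D(\chi^U)<0$ while preserving the finitely many remaining inequalities as $\ge$ for small $\eps>0$; and (iii)~$\Rightarrow$~(ii) by choosing $E$ that exposes the single vertex $D(\id)$ of $\close{Q_n}$ (so $E\bullet D(\chi^{U'})>0$ for every $U'$ and $E\bullet D(\pi)>E\bullet D(\id)$ for $\pi\neq\id,\id^-$), and setting $D:=C+\lambda E$ with $\lambda:=-C\bullet D(\chi^U)/E\bullet D(\chi^U)>0$, which forces $D\bullet D(\chi^U)=0$ and turns the $\ge$ inequalities strict. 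Alternatively, all three statements can be recognised as characterisations of $\RR_+D(\chi^U)$ being an extreme ray of the tangent cone of $\close{Q_n}$ at $D(\id)$: (ii) is the exposing certificate, and (iii) is, by Farkas, the statement $D(\chi^U)\notin\cone(V)$, which for a pointed cone is equivalent to extremality. Your derivation of $\mu>0$ via $\Mone{n}$ in effect proves that pointedness by hand; both are valid, but the tangent-cone reading makes clearer why the lemma is an ``easy consequence of Farkas.''
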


Condition~\eqref{eq:edge-cond-plain} is easier to check for individual matrices, but
condition~\eqref{eq:edge-cond-Farkas} will be needed in a proof below.

We move on to the next example which both provides some cases needed for the proof of
Theorem~\ref{thm:conjectureAboutExraysProven} and motivates the following definitions.

Let $U$ be a subset of $[n]$ and consider its representation as a word of
length $n$.  We say that a maximal sequence of consecutive $0$s in this word
is a {\it valley} of $U$. In other words, a valley is 
an inclusion wise maximal subset $[l,l+j] \subset \cplmt U$.
Accordingly, a maximal sequence of consecutive $1$s is called a {\it hill}.
A valley and a hill meet at a {\it slope}.  Thus the number of slopes is the
number of occurrences of the patterns $01$ and $10$ in the word, or in other
words, the number of $k\in[n-1]$ with $k\in U$ and 
$k+1\not\in U$ or vice versa.  If all valleys and hills of a subset $U$ of
$[n]$ consist of only one 
element (as for example in $10101$) or, equivalently, if $U$ has the maximal
possible number $n-1$ of 
slopes, or, equivalently, if $U$ consists of all odd or all even numbers in
$[n]$, we speak of an {\it 
  alternating} set.

\begin{lemma}\label{lem:ShownAllreadyForClosureOfQn} 
  For every set $\{W_1,\dots,W_r\}$ of non-empty
  proper subsets of $[n]$ incident on $\pi$, there is a matrix $C$ 
  such that the minimum $C\bullet D(\sigma)$ over all $\sigma\in S(n)$ is
  attained solely in $\pi$ and 
  $\pi^-$, and that $C\bullet D(\chi^{U'}) \ge 0$ for every non-empty proper
  subset $U'$ of $[n]$ where equality 
  holds precisely for the sets $W_i$ and their complements.
  This implies that $D(\pi)+\cone\{D(\chi^{W_1}),\dots,D(\chi^{W_r})\}$ is a
  face of the polyhedron $\close{Q_n}=P_n+$CUT$_n$.
\end{lemma}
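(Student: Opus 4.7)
The plan is to construct $C$ as a small perturbation of the matrix already produced in the proof of Proposition~\ref{cor:x-hlines-in-Qn}\ref{cor:x-hlines-in-Qn:a}, iterating the same one-entry perturbation once for each incident prefix that we want to ``remove'' from the zero-set.

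First, using the symmetry properties collected in Remark~\ref{rem:symm-Qn-and-inciovrdg}, I would reduce to the case $\pi=\id$. The subsets of $[n]$ incident to $\id$ are then exactly $[1],[2],\dots,[n-1]$, so write $W_i=[k_i]$ and set $K:=\{k_1,\dots,k_r\}\subseteq[n-1]$. Let $C_0$ denote the matrix constructed in the proof of Proposition~\ref{cor:x-hlines-in-Qn}\ref{cor:x-hlines-in-Qn:a}, which satisfies $C_0\bullet D(\sigma)>0=C_0\bullet D(\id)$ for every $\sigma\ne\id,\id^-$, and $C_0\bullet D(\chi^{U'})=0$ exactly when $U'$ or $\cplmt U'$ is incident to $\id$ (positive otherwise). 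I would then define
\[
  C := C_0 + 2\sum_{k\in[n-1]\setminus K} E_{k,k+1},
\]
where $E_{k,k+1}$ is the symmetric matrix with $1$ in the $(k,k+1)$ and $(k+1,k)$ entries and zeros elsewhere. This is precisely the single perturbation used at the end of the proof of Proposition~\ref{cor:x-hlines-in-Qn}\ref{cor:x-hlines-in-Qn:a}, now applied once for every $k\notin K$.

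The three claimed properties of $C$ then reduce to short inner-product checks. Since $E_{k,k+1}\bullet D(\chi^{[k']})=2\delta_{k,k'}$, one finds $C\bullet D(\chi^{[k']})=0$ precisely for $k'\in K$, and the same holds for $\cplmt[k']$ because $D(\chi^U)=D(\chi^{\cplmt U})$. For every remaining non-empty proper $U'$ one has $C_0\bullet D(\chi^{U'})>0$ and the added terms are non-negative, so $C\bullet D(\chi^{U'})>0$. For any $\sigma\in S(n)$, $E_{k,k+1}\bullet D(\sigma)=2|\sigma(k)-\sigma(k+1)|\ge 2$, with equality iff $\sigma(k),\sigma(k+1)$ are consecutive integers; each perturbation therefore adds exactly $2$ to $C\bullet D(\id)$ and at least $2$ to $C\bullet D(\sigma)$ for $\sigma\ne\id,\id^-$, so the strict inequality $C_0\bullet D(\sigma)>0$ lifts to $C\bullet D(\sigma)>C\bullet D(\id)$.

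The face statement then follows from $\close{Q_n}=P_n+\text{CUT}_n$: the inequality $C\bullet X\ge C\bullet D(\id)$ is valid on $\close{Q_n}$ since all vertices $D(\sigma)$ of $P_n$ satisfy it and all extreme rays $D(\chi^{U'})$ of CUT$_n$ have $C\bullet D(\chi^{U'})\ge 0$. Equality forces the $P_n$-part to be $D(\id)$ (=$D(\id^-)$) and the CUT$_n$-part to lie in $\cone\{D(\chi^{W_i})\}$, so the exposed face is exactly $D(\id)+\cone\{D(\chi^{W_1}),\dots,D(\chi^{W_r})\}$. The only real obstacle is the bookkeeping: one must verify that the simultaneous perturbations both (a) break equality on exactly the prefixes outside $K$ without spoiling it on those in $K$, and (b) preserve the uniqueness of $\id,\id^-$ as minimizers among permutations. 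Both rest on the uniform bound $|\sigma(k)-\sigma(k+1)|\ge 1$.
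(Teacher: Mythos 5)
Your argument is correct, but it is not the route the paper takes for this lemma. The paper's own proof is the one-liner ``Follows from Proposition~\ref{prop:n-1Dimensional}'': it relies on the facts that $M^1(\pi) = D(\pi) + M(\pi)$ is a face of the polyhedron $\close{Q_n}$, that $M(\pi)$ is a simplicial cone with generators $D(\chi^{\pi^{-1}([k])})$ (Lemma~\ref{le:sum1}), so every subset of generators spans a face of $M^1(\pi)$, and that faces of faces of a polyhedron are faces and are exposed; the existence and the qualitative properties of $C$ are then read off from a generic exposing inequality together with $\close{Q_n}=P_n+\text{CUT}_n$. Your proof instead generalises the \emph{explicit} perturbation used in the proof of Proposition~\ref{cor:x-hlines-in-Qn}\ref{cor:x-hlines-in-Qn:a}: start from the cyclic matrix $C_0$ and bump the entries $(k,k+1),(k+1,k)$ for every $k\notin K$. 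That is essentially the same computation the paper carries out for the special case of a single $W_i$, and it buys you a constructive certificate $C$ at the cost of doing the bookkeeping by hand. The computations are right; the only slip is cosmetic: with the coefficient $2$ in $C:=C_0+2\sum_{k\notin K}E_{k,k+1}$, each perturbation adds $4$ (not $2$) to $C\bullet D(\id)$, since $E_{k,k+1}\bullet D(\id)=2$ already; any positive coefficient works, so this does not affect the argument.
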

\begin{proof}
Follows from Proposition~\ref{prop:n-1Dimensional}.
\end{proof}

\begin{example}[Unbounded edges of $\close{Q_4}$]\label{ex:Q4}
  We consider the edges of $\close{Q_4}$ containing $D(\id)=D(\id^-)$ (this is
  justified by 
  Remark~\ref{rem:symm-Qn-and-inciovrdg}).  We distinguish the sets $U$ by
  their number of slopes. 
  Clearly, a set $U$ with a single slope is incident either to $\id$ or to
  $\id^-$, and we have already 
  dealt with that case in Lemma~\ref{lem:ShownAllreadyForClosureOfQn}.
  The following sets have two slopes: $0100$, $0110$, $0010$, $1011$, $1001$,
  and $1101$. 
  We only have to consider $1011$, $1001$, and $1101$, because the others are
  their complements. 
  The first one, $1011$, is almost incident $\id^-$, and the last one,
  $1101$, is almost incident to 
  $\id$, so we know that the pairs $\pair{\id}{1011}$ and $\pair{\id}{1101}$
  do not define edges of $\close{Q_4}$ by
  Lemma~\ref{lem:theyAreNoExtremeRays}. 
  For the remaining set with two slopes, $1001$, the following matrix satisfies
  property~\eqref{eq:edge-cond-Farkas} with $C$ replaced by $\Cm{1001}$ and
  $U$ by $1001$: 
  \begin{equation*}
    \Cm{1001}:=
    \Mtxv{
      \hphantom{-}0 & \hphantom{-}1 & -2 & \hphantom{-}1\\
      \hphantom{-}1 & \hphantom{-}0 & \hphantom{-}3 & -2\\
      -2  & \hphantom{-}3 & \hphantom{-}0 & \hphantom{-}1\\
      \hphantom{-}1 & -2  & \hphantom{-}1 & \hphantom{-}0
    }.
  \end{equation*}
  The two alternating sets (i.$\,$e., sets with tree slopes) are $1010$ and
  $0101$, which are almost incident to $\id$ and $\id^-$ respectively.
  This concludes the discussion of $\close{Q_4}$.
\end{example}

%%%%%%%%%%%%%%% REDUCTION LEMMA %%%%%%%%%%%%%%%%%%%%%%%%%%%%%%%%%%%%%%%%%%%%%%%%%%

Having settled some of the cases for small values of $n$, we give the result
by which the reduction to 
smaller $n$ is performed, which is an important ingredient for settling
Theorem~\ref{thm:conjectureAboutExraysProven}.  The following lemma shows that
unbounded edges of $\close{Q_n}$ can be ``lifted'' to a larger polyhedron
$\close{Q_{n+k}}$. 

\begin{lemma} \label{lem:reduct}
  Let $U_0$ be a non-empty proper subset of $[n]$ whose word has the form
  $a1b$ for two (possibly empty) 
  words $a,b$.  For any $k\ge 0$ define the subset $U_k$ of $[n+k]$ by its word
  \begin{equation*}
    U_k := a\ubtxt{1\dots1}{6mm}{$k+1$}b.
  \end{equation*}
  If the pair $\pair{\idn{n}}{U_0}$ defines an edge of $\close{Q_n}$, then the
  pair 
  $\pair{\idn{n+k}}{U_k}$ defines an edge of $\close{Q_{n+k}}$.
\end{lemma}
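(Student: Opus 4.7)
The plan is to apply Lemma~\ref{lem:Farkas-truc} in its Farkas form: by Remark~\ref{rem:symm-Qn-and-inciovrdg} I may assume $\pi=\idn{n}$, and the hypothesis then yields a matrix $C\in\SzM{n}$ satisfying $C\bullet D(\sigma)\ge C\bullet D(\idn{n})$ for all $\sigma\in S(n)$ with equality only for $\sigma\in\{\idn{n},\idn{n}^-\}$, $C\bullet D(\chi^W)\ge 0$ for all non-empty proper $W\subsetneq[n]$ with equality only for $W\in\{U_0,\cplmt U_0\}$, and $C\bullet D(\chi^{U_0})<0$. The goal is to produce $C'\in\SzM{n+k}$ satisfying the analogous three conditions relative to $\pair{\idn{n+k}}{U_k}$.

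I would construct $C'$ by lifting $C$ along the collapse map $f\colon[n+k]\to[n]$ (identity-with-shift outside the expanded block $\{p,\dots,p+k\}$, constantly equal to $p$ on the block, where $p:=|a|+1$) and adding a large-weight penalty on pairs inside the block. Concretely, let $c(a):=|f^{-1}(a)|$ and set $C'_{ij}:=C_{f(i),f(j)}/\bigl(c(f(i))\,c(f(j))\bigr)$ when $f(i)\ne f(j)$, and $C'_{ij}:=M$ for $i\ne j$ both lying in the block, where $M>0$ is to be chosen large at the end. The normalization is tailored so that $C'\bullet D(\chi^{f^{-1}(W)})=C\bullet D(\chi^W)$ for every $W\subseteq[n]$; in particular $C'\bullet D(\chi^{U_k})=C\bullet D(\chi^{U_0})<0$.

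For the cut condition, I would do a case analysis based on whether $V\subset[n+k]$ splits the expanded block. If $V$ does not split the block, then $V=f^{-1}(W)$ for a unique $W\subset[n]$, and the previous identity together with the edge hypothesis for $C$ yields $C'\bullet D(\chi^V)\ge 0$ with equality iff $V\in\{U_k,\cplmt U_k\}$. If $V$ does split the block, then the lifted part of $C'\bullet D(\chi^V)$ decomposes as a convex combination $(1-t)\,C\bullet D(\chi^{W^-})+t\,C\bullet D(\chi^{W^+})\ge 0$, where $t:=|V\cap\{p,\dots,p+k\}|/(k+1)$ and $W^\pm\subset[n]$ are the two natural projections of $V$, and the block-internal penalty contributes at least $2M$ more, making $C'\bullet D(\chi^V)$ strictly positive.

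The main obstacle is the permutation condition. Taking $M$ large enough forces any $\sigma'\in S(n+k)$ minimizing $C'\bullet D(\sigma')$ to place the expanded block at some consecutive positions $\{q,\dots,q+k\}$; the internal ordering of the block within those positions is irrelevant by symmetry of $C'$ on the block, so each block-consecutive $\sigma'$ projects to a unique $\sigma\in S(n)$ with $\sigma(p)=q$. A direct computation then gives
\[
  C'\bullet D(\sigma')=C\bullet D(\sigma)+2k\,\mathrm{cross}_\sigma+\kappa,
\]
where $\kappa$ is constant in $\sigma$ and $\mathrm{cross}_\sigma:=\sum_{a,b\ne p,\ \sigma(a)<\sigma(p)<\sigma(b)}C_{ab}$. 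The identity $\mathrm{cross}_\sigma=\mathrm{cross}_{\sigma^-}$, which follows from the symmetry $T\mapsto\cplmt T\setminus\{p\}$ applied to the cut identity $2\,\mathrm{cross}_\sigma=C\bullet D(\chi^{T_\sigma})-2\sum_{a\in T_\sigma}C_{ap}$ for $T_\sigma:=\sigma^{-1}([\sigma(p)-1])$, ensures that $\idn{n+k}^-$ remains a co-minimizer. The crux is then to show that the strict gap $C\bullet D(\sigma)-C\bullet D(\idn{n})>0$ available for $\sigma\notin\{\idn{n},\idn{n}^-\}$ dominates the variation of $\mathrm{cross}_\sigma$. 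I would address this by exploiting the freedom in the choice of $C$ (picking a certifying $C$ whose negative restriction to cuts bounds $|\mathrm{cross}_\sigma-\mathrm{cross}_{\idn{n}}|$ via the non-negative cut values $C\bullet D(\chi^{T_\sigma})$ and $C\bullet D(\chi^{T_\sigma\cup\{p\}})$), and if necessary by inducting on $k$ so that only the case $k=1$ has to be handled in this delicate way. Once the three Farkas conditions are secured, Lemma~\ref{lem:Farkas-truc} yields the conclusion.
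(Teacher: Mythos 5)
Your high-level strategy matches the paper's: use the Farkas form of Lemma~\ref{lem:Farkas-truc}, lift the certificate $C$ from $\SzM{n}$ to $\SzM{n+k}$, and impose a large penalty on pairs inside the expanded block to force it to behave well. Your cut argument is also correct: the split-versus-no-split case distinction, the convex-combination identity $C'\bullet D(\chi^V)=(1-t)\,C\bullet D(\chi^{W^-})+t\,C\bullet D(\chi^{W^+})+2Ms(k+1-s)$, and the strict positivity when the block is split all go through.

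However, there is a genuine gap, and you have correctly named it yourself: the permutation condition. Your construction is symmetric under permuting the expanded block and has no mechanism to pin down \emph{where} the block sits or \emph{which} outside elements lie on its left versus its right. Consequently, for a block-consecutive $\sigma'$ projecting to $\sigma$, the term $2k\,\mathrm{cross}_\sigma$ genuinely varies with $\sigma$, and it is simply not true in general that $C\bullet D(\sigma)-C\bullet D(\idn{n})$ dominates $2k\bigl|\mathrm{cross}_\sigma-\mathrm{cross}_{\idn{n}}\bigr|$: the gap on the left can be as small as the minimum positive value over a finite set, while the right-hand side has an arbitrary prefactor $2k$. Inducting down to $k=1$ removes the prefactor but not the problem, and ``exploiting the freedom in the choice of $C$'' is not something the edge hypothesis gives you for free --- you would need to construct such a $C$ and you haven't. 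The paper circumvents this entirely by adding the coupling matrices $B_-$ and $B_+$ (the $\pm1$ columns tying the two ends of the block to the $a$-part and $b$-part respectively), which for $\omega$ large force the coarse structure $\pi'([l_a])\subset[l_a]$, $\pi'([n']\setminus[n'-l_b])\subset[n']\setminus[n'-l_b]$, and $\pi'(j)=j$ on the block. This makes the set of ``cross'' pairs identical for every admissible $\pi'$, so the extra term $k\cdot C\bullet\bigl(\begin{smallmatrix}\N0&\Mone{}\\\Mone{}&\N0\end{smallmatrix}\bigr)$ is a true constant and cancels in the difference $C'\bullet D(\pi')-C'\bullet D(\idn{n'})$. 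That coupling mechanism is the missing ingredient in your proof; without it, or without an explicit argument that a certificate $C$ with the required quantitative slack exists, the permutation condition is not established.
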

Note that the lemma also applies to consecutive zeroes, by exchanging the
respective set by its complement.
\begin{proof}
  Let $C\in \SzM{n}$ be a matrix satisfying conditions
  \eqref{eq:edge-cond-Farkas} for $U:=U_0$.  Fix 
  $k\ge 1$ and let $n':=n+k$.  We will construct a matrix $C'\in \SzM{n'}$
  satisfying 
  \eqref{eq:edge-cond-Farkas} for $U:=U_k$.
  For a ``big'' real number $\omega\ge 1$ define a matrix $B_\omega\in
  \SzM{k+1}$ whose entries are zero 
  except for those connecting $j$ and $j+1$, for $j\in[k]$:
  \begin{equation*}
    B_\omega :=
    \Mtxv{
      0     &\omega&      &       &      &      &      \\
      \omega&0     &\omega&       & \N0  &      &      \\
      ~     &\omega&      &       &      &      &      \\
      ~     &      &      &\ddots &      &      &      \\
      ~     &      &      &       &      &\omega&      \\
      ~     &\N0   &      &       &\omega&0     &\omega\\
      ~     &      &      &       &      &\omega&     0
    }%
    .
  \end{equation*}
  We use this matrix to put a heavy weight on the ``path'' which we ``contract.''
  For our second ingredient, let $l_a$ denote the length of the word $a$ and
  $l_b$ the length of the word 
  $b$ (note that $l_a=0$ and $l_b=0$ are possible).  Then we define
  \begin{align*}
    B_- &:=
    \Mtxv{
      +1           & \dots & +1         \\
      ~            &       &            \\
      \Zero_{k-1}  & \dots & \Zero_{k-1}\\ 
      ~            &       &            \\
      -1           & \dots & -1         \\ 
    } \in \MM((k+1)\times l_a)
    \qquad\text{and}\\
    B_+ &:=
    \Mtxv{
      -1           & \dots & -1         \\
      ~            &       &            \\
      \Zero_{k-1}  & \dots & \Zero_{k-1}\\ 
      ~            &       &            \\
      +1           & \dots & +1         \\ 
    } \in \MM((k+1)\times l_b)
    ,
  \end{align*}
  where $\Zero_{k-1}$ stands for a column of $k-1$ zeros.  Putting these
  matrices together we obtain an 
  $n'\times n'$-matrix $B$:
  \begin{equation*}
    B :=
    \begin{pmatrix}
      \N0     & B_-^\Tp  & \N0 \\
      B_-     & B_\omega & B_+ \\
      \N0     & B_+^\Tp  & \N0
    \end{pmatrix}.
  \end{equation*}
  
  Now it is easy to check that for any
  $\pi'\in \pi[n']$ we have $B\bullet D(\pi') \ge B\bullet D(\id)$.
  Moreover let $\pi'\in \pi[n']$ satisfy 
  $B\bullet D(\pi') < B\bullet D(\id) +1$.
  By exchanging $\pi'$ with $\pi'^-$, we can assume that $\pi'(1) <
  \pi'(n')$. 
  It is easy to see that such a $\pi'$ then has the following ``coarse
  structure''  
  \begin{equation} \label{eq:lemreduct:coarse-struct}
    \begin{aligned}
      \pi'([l_a])                  &\subset [l_a] \\
      \pi'([n']\setminus[n'-l_b])  &\subset [n']\setminus[n'-l_b] \\
      \pi'(j)                      &= j \quad \forall\; j\in \{l_a+1,\dots, l_a+k+1\}.
    \end{aligned}
  \end{equation}
  Thus the matrix $B$ enforces that the ``coarse structure'' of a $\pi'\in
  \pi[n']$ minimizing $B\bullet 
  D(\pi')$ coincides with $\id$.  We now modify the matrix $C$ to take care of
  the ``fine structure''. 
  For this, we split $C$ into matrices
  $C_{11}\in \SzM{l_a}$, %
  $C_{22}\in \SzM{l_b}$, %
  $C_{12}\in\MM(l_a\times l_b)$, %
  $C_{21}=C_{12}^\Tp\in\MM(l_b\times l_a)$, %
  and vectors
  $c\in\RR^{l_a}$, %
  $d\in\RR^{l_b}$ %
  as follows:
  \begin{equation*}
    C =
    \begin{pmatrix}
      C_{11} & c & C_{12} \\
      c^\Tp  & 0 & d^\Tp  \\
      C_{21} & d & C_{22} 
    \end{pmatrix}.
  \end{equation*}
  Then we define the ``stretched'' matrix $\check C\in\SzM{n'}$ by
\newcommand{\sz}{\scriptstyle0}%
  \begin{equation*}
    \check C :=
    \begin{pmatrix}
      C_{11}   & c   && \M0       &&\Zero& C_{12}     \\
      c^\Tp    &\sz  &&           && \sz & \Zero^\Tp  \\
      ~        &     &&           &&     &            \\
      \M0      &     && \M0       &&     & \M0        \\
      ~        &     &&           &&     &            \\
      \Zero^\Tp& \sz &&           &&\sz  & d^\Tp      \\
      C_{21}   &\Zero&& \M0       && d   & C_{22}
    \end{pmatrix}
  \end{equation*}
  where the middle $\M0$ has dimensions $(k-1)\times(k-1)$.  Finally we let
  $C' := B +\eps \check C$, 
  where $\eps>0$ is small.  We show that $C'$ satisfies \eqref{eq:edge-cond-Farkas}.
  
  We first consider $C'\bullet D(\chi^{U'})$ for non-empty subsets
  $U'\subsetneq[n']$.  Note that, if 
  $U'$ contains $\{l_a+1,\dots,l_a+k+1\}$, then for $U:=
  U'\setminus\{l_a+1,\dots,l_a+k+1\}$, we have 
  $C'\bullet D(\chi^{U'}) = C\bullet D(\chi^U)$.  Thus we have $C'\bullet
  D(\chi^{U_k}) = C\bullet 
  D(\chi^{U_0}) < 0$ proving \eqref{eq:cutIsNegative} for $C'$ and $U_k$.  For
  every other $U'$ with 
  $C'\bullet D(\chi^{U'}) <0$, if $\omega$ is big enough, then either $U'$ or
  $\cplmt U'$ contains 
  $\{l_a+1,\dots,l_a+k+1\}$, and w.l.o.g.\ we assume that $U'$ does.  By
  \eqref{eq:cutsArePositive} 
  applied to $C$ and $U$, we know that this implies $U=U_0$ or $U=\cplmt U_0$
  and hence $U'=U_k$ or 
  $\cplmt U'=U_k$.  Thus, \eqref{eq:cutsArePositive} holds for $C'$ and $U_k$. 
  
  Second, we address the permutations.  To show \eqref{eq:idIsOneOpt}, let
  $\pi'\in S(n)$ be given which 
  minimizes $C'\bullet D(\pi')$.  Again, by replacing $\pi'$ by $\pi'^-$ if
  necessary, we assume 
  $\pi'(1)<\pi'(n')$ w.l.o.g.  If $\eps$ is small enough, we know that
  $\pi'$ has the coarse structure 
  displayed in \eqref{eq:lemreduct:coarse-struct}.  This implies that we can
  define a permutation $\pi\in S(n)$ by letting 
  \begin{equation*}
    \pi(j) :=
    \begin{cases}
      \pi'(j)        & \text{if } j\in [l_a],\\
      \pi'(j)=j      & \text{if } j=l_a+1,\\
      \pi'(j-k) + k  & \text{if } j\in [n]\setminus[l_a+1].
    \end{cases}
  \end{equation*}
  An easy but lengthy computation (see \cite{S09} for the details) shows that
 \begin{align*}
    C'\bullet D(\pi') - C'\bullet D(\idn{n'})&\geq\eps\Big[ C\bullet D(\pi) + k\cdot C\bullet\Mtxv{\M0_{l_a\times
        l_a} & \M1\\ \M1 & \M0_{l_b\times l_b}} \\
     &-\lt(
      C\bullet D(\idn{n})
      + k\cdot C\bullet\Mtxv{\M0_{l_a\times l_a} & \M1\\ \M1 & \M0_{l_b\times l_b}}
      \rt )\Big]\\
    &= \eps \big[ C\bullet D(\pi) - C\bullet D(\idn{n})\big] \ge 0.
  \end{align*}
Thus \eqref{eq:idIsOneOpt} holds.
\end{proof}

\begin{example}
  We give an example for the application of Lemma~\ref{lem:reduct}.
  For $n=5$, consider the half-line defined by the pair $\pair{\id}{11001}$.
  The set $11001$ can be 
  reduced to $1001$ by contracting the hill $1\path 2$.  To do so we set
  \begin{equation*}
    \Cm{11001}:=
    \eps \Mtxv{
      \hphantom{-}0 & \hphantom{-}0 & \hphantom{-}0 & \hphantom{-}0 & \hphantom{-}0\\
      \hphantom{-}0 & \hphantom{-}0 & \hphantom{-}1 & -2 & \hphantom{-}1\\
      \hphantom{-}0 & \hphantom{-}1 & \hphantom{-}0 & \hphantom{-}3 & -2\\
      \hphantom{-}0 & -2 & \hphantom{-}3 & \hphantom{-}0 & \hphantom{-}1\\
      \hphantom{-}0 & \hphantom{-}1 & -2 & \hphantom{-}1 & \hphantom{-}0
    }
    +
    \Mtxv{
      \hphantom{-}0 & \hphantom{-}\omega & -1 & -1 & -1\\
      \hphantom{-}\omega & \hphantom{-}0 & \hphantom{-}1 & \hphantom{-}1 & \hphantom{-}1\\
      -1 & \hphantom{-}1 & \hphantom{-}0 & \hphantom{-}0 & \hphantom{-}0\\
      -1 & \hphantom{-}1 & \hphantom{-}0 & \hphantom{-}0 & \hphantom{-}0\\
      -1 & \hphantom{-}1 & \hphantom{-}0 & \hphantom{-}0 & \hphantom{-}0
    }
  \end{equation*}
  for a small $\eps>0$ and a big $\omega\ge 1$.
\end{example}

%%%%%%%%%%%%%%%%%%%%%%%%%%%%%
%%%%%%%%%%%%%%%%%%%%%%%%%%%%%
%\subsubsection{Completion of the proof of
%  Theorem~\ref{thm:conjectureAboutExraysProven}} 

After these preparations we can tackle the proof of the theorem.

\begin{proof}[Proof of Theorem~\ref{thm:conjectureAboutExraysProven}]
  By Remark~\ref{rem:symm-Qn-and-inciovrdg}, we only need to consider
  $\pi=\id$.  We distinguish the sets 
  $U$ by their numbers of slopes.

  %%%%%
  \paragraph{\it One slope.}
  This is equivalent to $U$ or $\cplmt U$ being incident to $\id$. We have
  treated this case in Lemma~\ref{lem:ShownAllreadyForClosureOfQn}.

  %%%%%
  \paragraph{\it Two slopes.}
  The complete list of all possibilities, up to complements, and how they are
  dealt with is summarized in 
  Table~\ref{table:2slopes}.  In this table, $0$ stands for a valley
  consisting of a single zero while 
  $0\dots 0$ stands for a valley consisting of at least two zeros (the same
  with hills).  The matrices 
  for the reduced words satisfying \eqref{eq:edge-cond-Farkas} can be found in the appendix on
  page~\pageref{table:appendix}. The condition \eqref{eq:edge-cond-Farkas} can
  be verified by some case distinctions.

  \begin{table}[thb] 
    \begin{center}
      \caption{List of all sets with two slopes (up to complement)}
      \begin{tabular}{ccccl}
        \toprule  
        \addlinespace[1ex]
        \multicolumn{3}{c}{Word} & Edge? & Why?\\
        \addlinespace[1ex]
        Hill 1 & Valley & Hill 2 \\
        \addlinespace[1ex]\rowcolor[gray]{0.9}  
        1 & 0 & 1 &no& almost incident to $\id$\\
        \addlinespace[1ex]
        1 & 0 & $1\dots1$ &no& almost incident to $\id^-$ \\
        \addlinespace[1ex]\rowcolor[gray]{0.9}
        1 & $0\dots0$ & 1 &yes& reduce to $n=4$, $1001$, by Lemma~\ref{lem:reduct}\\
        \addlinespace[1ex]
        1 & $0\dots0$ & $1\dots1$ &yes& reduce to $n=4$, $1001$, by Lemma~\ref{lem:reduct}\\
        \addlinespace[1ex]\rowcolor[gray]{0.9}
        $1\dots1$ & 0 & 1 &no& almost incident to $\id$ \\
        \addlinespace[1ex]
        $1\dots1$ & 0 & $1\dots1$ &yes& reduce to $n=5$, $11011$, by Lemma~\ref{lem:reduct}\\
        \addlinespace[1ex]\rowcolor[gray]{0.9}
        $1\dots1$ & $0\dots0$ & 1 &yes& reduce to $n=4$, $1001$, by Lemma~\ref{lem:reduct}\\
        \addlinespace[1ex]
        $1\dots1$ & $0\dots0$ & $1\dots1$ &yes& reduce to $n=5$, $11011$, by Lemma~\ref{lem:reduct} \\   
        \addlinespace[1ex]
        \toprule
      \end{tabular}
      \label{table:2slopes}
    \end{center}
  \end{table}

  %%%%%
  \paragraph{\it Three slopes.}
  This case can be tackled using the same methods we applied in the case above.
  Table~\ref{table:3slopes} gives the results.

  \begin{table}[thb] 
    \begin{center}
      \caption{List of all sets with three slopes (up to complement)}
      \begin{tabular}{cccccl}
        \toprule  
        \addlinespace[1ex]
        \multicolumn{4}{c}{Word} & Edge? & Why?\\
        \addlinespace[1ex]
        Hill 1&Valley 1&Hill 2&Valley 2\\
        \addlinespace[1ex]\rowcolor[gray]{0.9}  
        1 & 0 & 1 & 0 &no& almost incident to $\id$\\
        \addlinespace[1ex]
        1 & 0 & 1 & $0\dots0$ &no& almost incident to $\id$\\
        \addlinespace[1ex]\rowcolor[gray]{0.9}
        1 & 0 & $1\dots1$ & 0 &yes& reduce to $n=5$, $10110$, by Lemma~\ref{lem:reduct}\\
        \addlinespace[1ex]
        1 & 0 & $1\dots1$ & $0\dots0$ &yes&  reduce to $n=5$, $10110$, by Lemma~\ref{lem:reduct} \\   
        \addlinespace[1ex]\rowcolor[gray]{0.9}
        1 & $0\dots0$ & 1 & 0 &yes& reduce to $n=5$, $10010$, by Lemma~\ref{lem:reduct}\\
        \addlinespace[1ex]
        1 & $0\dots0$ & 1 & $0\dots0$ &yes&  reduce to $n=5$, $10010$, by Lemma~\ref{lem:reduct}\\
        \addlinespace[1ex]\rowcolor[gray]{0.9}
        1 & $0\dots0$ & $1\dots1$ & 0 &yes&  reduce to $n=5$, $10010$, by Lemma~\ref{lem:reduct} \\   
        \addlinespace[1ex]
        1 & $0\dots0$ & $1\dots1$ & $0\dots0$ &yes&  reduce to $n=5$, $10110$, by Lemma~\ref{lem:reduct}\\
        \addlinespace[1ex]\rowcolor[gray]{0.9}
        $1\dots1$ & 0 & 1 & 0&no& almost incident to $\id$ \\
        \addlinespace[1ex]
        $1\dots1$ & 0 & 1 & $0\dots0$ &no& almost incident to $\id$\\
        \addlinespace[1ex]\rowcolor[gray]{0.9}
        $1\dots1$ & 0 & $1\dots1$ & 0 &yes&  reduce to $n=5$, $10110$, by Lemma~\ref{lem:reduct}\\
        \addlinespace[1ex]
        $1\dots1$ & 0 & $1\dots1$ & $0\dots0$ &yes&  reduce to $n=5$, $10110$, by Lemma~\ref{lem:reduct}\\
        \addlinespace[1ex]
        $1\dots1$ & $0\dots0$ & 1 & 0 &yes&  reduce to $n=5$, $10010$, by Lemma~\ref{lem:reduct}\\
        \addlinespace[1ex]\rowcolor[gray]{0.9}
        $1\dots1$ & $0\dots0$ &1 & $0\dots0$ &yes&  reduce to $n=5$, $10010$, by Lemma~\ref{lem:reduct}\\
        \addlinespace[1ex]
        $1\dots1$ & $0\dots0$ & $1\dots1$ & 0 &yes&  reduce to $n=5$, $10010$, by Lemma~\ref{lem:reduct}\\
        \addlinespace[1ex]\rowcolor[gray]{0.9}
        $1\dots1$ & $0\dots0$ & $1\dots1$ & $0\dots0$ &yes&  reduce to $n=5$, $10010$, by
        Lemma~\ref{lem:reduct}\\
        \addlinespace[1ex]
        \toprule
      \end{tabular}
      \label{table:3slopes}
    \end{center}
  \end{table}
  
  \paragraph{\it $s\ge 4$ slopes.}
  
  Using Lemma~\ref{lem:reduct}, we reduce such a set to an alternating set
  with $s$ slopes showing that 
  for all these sets $U$ the pair $\pair{\id}{U}$ defines an edge of
  $\close{Q_n}$.  This is in accordance with the statement of the theorem
  because sets which are almost incident to $\id$ can have at most three
  slopes. 
  The statement for alternating sets is proven by induction on $n$ in
  Lemma~\ref{lem:induction} below. 
  Note that the starts of the inductions in the proof of that lemma are $n=5$
  and $n=6$ for even or odd $s$ respectively.%, which we dealt with in the
  %examples above. 
  
  This concludes the proof of the theorem.
\end{proof}

We now present the inductive construction which we need for the case of an
even number of $s\ge 4$ slopes.

\begin{lemma} \label{lem:induction}
  For an integer $n\ge 5$ let $U$ be an alternating subset of $[n]$.  The pair
  $\pair{\id}{U}$ defines an edge of $\close{Q_n}$.
\end{lemma}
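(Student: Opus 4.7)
The plan is as follows. By Lemma~\ref{lem:Farkas-truc} and the symmetry in Remark~\ref{rem:symm-Qn-and-inciovrdg}, it suffices to treat $U = \{1,3,5,\ldots\}\cap[n]$ and to exhibit $C \in \SzM{n}$ verifying conditions \eqref{eq:idIsOneOpt}--\eqref{eq:cutIsNegative}. I would proceed by induction on $n$ in steps of two, with base cases $n=5$ (alternating word $10101$, four slopes) and $n=6$ (alternating word $101010$, five slopes); these two values correspond to the two parities of the number of slopes $s=n-1$, which is preserved under $n\mapsto n+2$.

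For the two base cases, I would exhibit explicit matrices $C^{[5]}$ and $C^{[6]}$ of the form $\eps R + B$, where $R$ is a small ``shape'' matrix that singles out $\chi^U, \chi^{\cplmt U}$ among subsets and $B$ is a large matrix that forces $\id,\id^-$ to be the unique permutation minimizers, in the same spirit as the matrices $\Cm{1001}$ and $\Cm{11001}$ appearing in Example~\ref{ex:Q4} and in the example following Lemma~\ref{lem:reduct}. The verification of \eqref{eq:idIsOneOpt}--\eqref{eq:cutIsNegative} then reduces to a finite case check over the $n!$ permutations and the $2^{n-1}-1$ nonempty proper subsets modulo complementation.

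For the inductive step, suppose $C \in \SzM{n}$ works for alternating $U \subset [n]$, and let $U' \subset [n+2]$ be the alternating extension obtained by appending two further positions continuing the alternation. Following the template of Lemma~\ref{lem:reduct}, set $C' := B' + \eps \check C$, where $\check C$ is $C$ embedded in the upper-left $n\times n$ block (and zero elsewhere), and $B'$ is a ``heavy'' matrix supported on rows and columns indexed by $\{n,n+1,n+2\}$. The matrix $B'$ must simultaneously (i) place weights $\omega\gg 0$ on the entries connecting consecutive pairs among $\{n,n+1,n+2\}$, so that for $\omega$ large, any $\pi' \in S(n+2)$ nearly minimizing $C'\bullet D(\pi')$ is forced (up to the antipodal symmetry $\pi' \mapsto \pi'^-$) to fix $\pi'(n+1)=n+1$ and $\pi'(n+2)=n+2$, thereby reducing \eqref{eq:idIsOneOpt} for $C'$ to the inductive hypothesis applied to $\pi := \pi'|_{[n]}$; and (ii) have sign-alternating entries linking $\{n+1,n+2\}$ to $[n]$, so that $B'\bullet D(\chi^{U''})\ge 0$ for every $U''\subsetneq[n+2]$ with equality only when $U''\cap\{n+1,n+2\}$ realizes the alternating continuation of $U$, which reduces \eqref{eq:cutsArePositive} and \eqref{eq:cutIsNegative} for $C'$ (by case analysis on $U''\cap\{n+1,n+2\}$) to the hypothesis for $C$.

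The principal technical obstacle is arranging (ii): unlike Lemma~\ref{lem:reduct}, where the contracted hill keeps both new positions on the \emph{same} side of every relevant cut, here the two appended positions must lie on \emph{opposite} sides of $U'$, so subsets $U''$ placing them together must be strictly penalized by $B'$. Getting the sign pattern right on the entries of $B'$ connecting $\{n+1,n+2\}$ to $[n]$, and calibrating $\omega \gg 1$ and $\eps \ll 1$ so that (i), (ii), and the inductive validity of $C$ inside the upper-left block all coexist, is the delicate part of the argument, and is what makes the alternating case genuinely not reducible to Lemma~\ref{lem:reduct} alone.
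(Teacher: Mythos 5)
Your overall skeleton---induction in steps of two with base cases $n=5$ and $n=6$, distinguished by the parity of the number of slopes $s=n-1$---matches the paper's, but the inductive step you propose is a different construction from the paper's, and the hardest part of your construction is explicitly left unresolved. You set $C' := B' + \eps\check C$ with $\check C$ the old certificate embedded in the top-left $n\times n$ block and $B'$ a ``heavy'' matrix on rows and columns meeting $\{n,n+1,n+2\}$, in the spirit of Lemma~\ref{lem:reduct}, and then you flag that designing $B'$ so that only cuts continuing the alternation are favoured---with $n+1$ and $n+2$ forced to \emph{opposite} sides---is ``the delicate part'' and is ``what makes the alternating case genuinely not reducible to Lemma~\ref{lem:reduct} alone.'' You are right that it is delicate, but you have not exhibited $B'$, nor shown that any calibration of $\omega\gg 1$ and $\eps\ll 1$ can reconcile \eqref{eq:idIsOneOpt}, \eqref{eq:cutsArePositive} and \eqref{eq:cutIsNegative}; as written, this is a description of the obstacle rather than a proof. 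The tension is real: if $B'$ dominates, the strict negativity in \eqref{eq:cutIsNegative} is hard to produce from $B'$ alone without also turning other cuts negative, while if the $\eps\check C$ term is to carry that condition, $\check C$ sees the two appended coordinates only as zeros and cannot tell the alternating continuation apart from the others.

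The paper avoids this tension by not using an $\omega/\eps$ two-scale construction at all. It fixes the explicit base certificate $E^5 = \Cm{10101}$ (resp.\ $E^6 = \Cm{101010}$), takes the inductive certificate $E^-$ for the alternating set in $[n]$ satisfying the \emph{primal} condition \eqref{eq:edge-cond-plain}, and forms
$E := \widehat E + \widehat{E^5}$,
where $\widehat E$ pads $E^-$ with zero rows and columns at positions $n+1,n+2$ and $\widehat{E^5}$ places $E^5$ in the bottom-right $5\times 5$ corner. The two summands deliberately \emph{overlap} on the $3\times 3$ block indexed by $\{n-2,n-1,n\}$, where both $E^-$ and $E^5$ face the same local alternating pattern $101$; this overlap is exactly what enforces the opposite-sides constraint on the two appended positions, replacing the sign-alternating $B'$ you were attempting to design. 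Because both summands already satisfy the strict inequalities in \eqref{eq:edge-cond-plain} (rather than the Farkas form \eqref{eq:edge-cond-Farkas} you chose), the sum inherits them termwise with no scale balancing. In short: your proposal identifies the right obstacle, but the construction you outline is not the one that resolves it, and the piece you left as ``delicate'' is precisely the piece that the paper's overlap trick handles; the argument as you have it is incomplete.
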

\begin{proof}
  We first prove the case when $n$ is odd.

  The proof is by induction over $n$. For the start of the induction we
  consider $n=5$ and offer the 
  matrix $\Cm{10101} \in\SzM{5}$ in Table~\ref{table:appendix} of the
  appendix satisfying \eqref{eq:edge-cond-plain}.  
  %Moreover, there exists a $E^5\in\SzM{5}$ satisfying
  %\eqref{eq:edge-cond-plain}. 
  We will need this matrix in the inductive construction.
  
  Now set $E^5 := \Cm{10101}$ and assume that the pair $\pair{\id}{U^-}$
  defines an edge of 
  $\close{Q_n}$ where $U^-$ is an alternating subset 
  of $[n]$.  W.l.o.g., we assume that $U^-=10\dots01$.  There exists a matrix
  $E^-\in\SzM{n}$ for which 
  \eqref{eq:edge-cond-plain} holds.  We will construct a matrix
  $E\in\SzM{n+2}$ satisfying 
  \eqref{eq:edge-cond-plain} for $U := 010\dots010$.
  
  We extend $E^-$ to a $(n+2)\times(n+2)$-Matrix
  \begin{equation*}
    \widehat E := 
    \Mtxv{
      \textstyle E^-&& \Zero &\Zero \\
      ~           &&       &     \\
      \Zero^\Tp   && 0     &0     \\
      \Zero^\Tp   && 0     &0     \\
    }.
  \end{equation*}
  We do the same with $E^5$, except on the other side:
  \begin{equation*}
    \widehat{E^5} :=
    \Mtxv{
      0     &0     &&\Zero^\Tp   \\
      0     &0     &&\Zero^\Tp   \\
      ~     &      &&            \\
      \Zero &\Zero &&\textstyle E^5\\
      }.
  \end{equation*}
  Now we let $E:= \widehat E + \widehat{E^5}$ and check the conditions
  \eqref{eq:edge-cond-plain} on $E$.  These are now easily verified.

  For the even case we guarantee the start of induction investigating
  $n=6$. We give a matrix 
  $\Cm{101010}$ satisfying \eqref{eq:edge-cond-plain} in
  Table~\ref{table:appendix} in the appendix. 
  (Note that $101010$ is the only set which is not incident to $\id$, 
  is not almost incident to $\id$ or $\id^-$, cannot be reduced by
  Lemma~\ref{lem:reduct} and is no complement of sets of any of these three types.)  
  The induction is proved in the same way by using the matrix $E^6:=\Cm{101010}$.
\end{proof}

%%% Local Variables: 
%%% mode: latex
%%% TeX-master: "Qn"
%%% End: 

\section{Concluding Remarks} \label{se:end}

The $\RR$-embeddable $1$-separated metrics are a natural and fascinating class
of metrics, which are also of some practical importance due to their
connection with graph layout problems. We have established some fundamental
properties of such metrics, and also initiated a study of their convex hull
and its closure. 

There are several possible avenues for future research. First, one could
search for 
new valid or facet-defining inequalities. Second, one could study the
complexity of 
the separation problems associated with various families of inequalities,
which would 
be essential if one wished to use the inequalities within a cutting-plane
algorithm. 
Third, it would be interesting to know whether the {\it bounded}\/ edges of
the convex hull, or its closure, have a simple combinatorial interpretation.\\
\\
{\bf Acknowledgement:} The first author was supported by the Engineering and
Physical 
Sciences Research Council under grant {\tt EP/D072662/1}. The third author was
supported by Deutsche Forschungsgemeinschaft (DFG) within grant RE~776/9-1 and
by the Communaut\'e  fran\c caise de Belgique under Actions de Recherche
Concert\'ees. 

\section*{Appendix}

\begin{table}[thb] 
\begin{center}
\caption{\label{table:appendix} Matrices certifying unbounded edges of $Q_n$}
 \begin{tabular}{ccrl}
   \toprule  
   \addlinespace[1ex]
   $n$ &Slopes&\multicolumn{2}{c}{Matrix}\\
   \addlinespace[1ex]\rowcolor[gray]{0.9}  
%   5 & $ \Cm{10011}:=\begin{pmatrix}
%     0 & 1 & -2 & 1 & 0\\
%     1 & 0 & 3 & 0 & -2\\
%     -2 & 3 & 0 & 0 & 1\\
%     1 & 0 & 0 & 0 & 1\\
%     0 & -2 & 1 & 1 & 0
%   \end{pmatrix}$\\
%   \addlinespace[1ex]
   \addlinespace[1ex]\rowcolor[gray]{0.9}
   4 &2& $\Cm{1001} :=$&$
   \begin{pmatrix}
     \;0 & \;1 & -2 & \;1\\
     \;1 & \;0 & \;3 & -2\\
     -2  & \;3 & \;0 & \;1\\
     \;1 & -2  & \;1 & \;0
   \end{pmatrix}$\\
   \addlinespace[1ex]
   5 &2& $\Cm{11011}:=$&$
    \begin{pmatrix}
      \;0 & \;8 & -6 & -1 & -1\\
      \;8 & \;0 & \;2 & \;9 & -3\\
      -6 & \;2 & \;0 & \;5 & -7\\
      -1 & \;9 & \;5 & \;0 & 11\\
      -1 & -3 & -7 & 11 & \;0
    \end{pmatrix}$\\
%   \addlinespace[1ex]\rowcolor[gray]{0.9}
%   5 & $\Cm{11001}:=
%   \begin{pmatrix}
%     0 & 1 & -1 & 0 & 0\\
%     1 & 0 & 2 & -2 & 1\\
%     -1 & 2 & 0 & 3 & -2\\
%     0 & -2 & 3 & 0 & 1\\
%     0 & 1 & -2 & 1 & 0
%   \end{pmatrix}$ \\  
   \addlinespace[1ex]\rowcolor[gray]{0.9}
   5 &3& $\Cm{10110}:=$&$
   \begin{pmatrix}
      \;0 & \;2 & \;2 & \;1 & -3\\
      \;2 & \;0 & \;0 & -2 & \;2\\
      -2 & \;0 & \;0 & \;2 & \;0\\
      \;1 & -2 & \;2 & \;0 & \;1\\
      -3 & \;2 & \;0 & \;1 & \;0
    \end{pmatrix}$\\
   \addlinespace[1ex]
    5  &3& $\Cm{10010}:=$&$
    \begin{pmatrix}
      \;0 & \;2 & -2 & \;2 & -2\\
      \;2 & \;0 & \;4 & -3 & \;1\\
      -2 & \;4 & \;0 & \;1 & \;1\\
      \;2 & -3 & \;1 & \;0 & \;1\\
      -2 & \;1 & \;1 & \;1 & \;0
    \end{pmatrix}$\\
   \addlinespace[1ex]\rowcolor[gray]{0.9}
   5 &4& $\Cm{10101}:=$&$
   \begin{pmatrix} 
   \;0 & \;0 & \;3 & -2 & -1\\
   \;0 & \;0 & \;1 & \;1 & -2\\
   \;3 & \;1 & \;0 & \;1 & \;3\\
   -2 & \;1 & \;1 & \;0 & \;0\\
   -1 & -2 & \;3 & \;0 & \;0
   \end{pmatrix}$\\
   %  0 & 1 & 2 & -2& 0\\die hier erfuellt 8
   %  1 & 0 & 1 & 2 & -2\\
   %  2 & 1 & 0 & 1 & 2\\
   %  -2& 2 & 1 & 0 & 1\\
   %  0 &-2 & 2 & 1 & 0
   \addlinespace[1ex]
    6 &5& $\Cm{101010}:=$&$
    \begin{pmatrix} 
      \;0 & \;0 & \;1 & -1 & \;0 & \;0\\
      \;0 & \;0 & \;1 & \;1 & -2 & \;0\\
      \;1 & \;1 & \;0 & \;1 & \;3 & -2\\
      -1 & \;1 & \;1 & \;0 & \;0 & \;1\\
      \;0 & -2 & \;3 & \;0 & \;0 & \;1\\
      \;0 & \;0 & -2 & \;1 & \;1 & \;0
    \end{pmatrix}$\\
    %  0 & 2 & 2 & -2& 2 & -2\\die hier erfuellt 8
    %  2 & 0 & 0 & 2 & -2 & 1\\
    %  2 & 0 & 0 & 2 & 1 & -1\\
    %  -2& 2 & 2 & 0 & 1 & 2\\
    %  2 &-2 & 1 & 1 & 0 & 2\\
    %  -2 & 1& -1& 2 & 2 & 0
   \addlinespace[1ex]
   \toprule
 \end{tabular}
\end{center}
\end{table}

%%% Local Variables: 
%%% mode: latex
%%% TeX-master: "Qn"
%%% End: 

\bibliographystyle{plain}
\bibliography{Qn}

\end{document}